\numberwithin{equation}{section}
\newtheorem{theorem}{Theorem}
\newtheorem{lemma}{Lemma}
\theoremstyle{definition}
\newtheorem{assumption}[theorem]{Assumption}
\theoremstyle{remark}
\newtheorem{remark}{Remark}
\begin{document}

\title[SSBE preserves asymptotic behaviour in SDEs]
{On the Dynamic Consistency of the Split Step Method for Classifying the Asymptotic Behaviour of
Globally Stable Differential Equations perturbed by State--independent Stochastic terms}

\author{John A. D. Appleby}
\address{Edgeworth Centre for Financial Mathematics, School of Mathematical
Sciences, Dublin City University, Glasnevin, Dublin 9, Ireland}
\email{john.appleby@dcu.ie} \urladdr{webpages.dcu.ie/\textasciitilde
applebyj}

\author{Jian Cheng}
\address{Edgeworth Centre for Financial Mathematics, School of Mathematical Sciences,
Dublin City University, Glasnevin, Dublin 9, Ireland.}
\email{jian.cheng2@mail.dcu.ie}

\author{Alexandra Rodkina}
\address{Department of Mathematics and Computer Science, Mona Campus, The University of the West Indies, Mona, Kingston 7, Jamaica}
\email{alexandra.rodkina@uwimona.edu.jm}


\thanks{The first and second authors was partially funded by the Science Foundation Ireland
grant 07/MI/008 ``Edgeworth Centre for Financial Mathematics''.}
\subjclass{39A30, 39A50, 37H10, 93E15, 65C30, 65C20} \keywords{stochastic differential equation, stochastic difference equation, asymptotic stability, global asymptotic stability, local asymptotic
stability, split step method, Euler--Maruyama, backward Euler, implicit, one-sided Lipschitz condition}
\date{18 January 2013}

\begin{abstract}
In this paper we classify the pathwise asymptotic behaviour of the discretisation of a general autonomous scalar differential
equation which has a unique and globally stable equilibrium.
The underlying continuous equation  is subjected to
a stochastic perturbation whose intensity is state--independent. In the main result, it is shown that when
the split--step--method is applied to the resulting stochastic differential equation, and the stochastic intensity is decreasing,
the solutions of the discretised equation inherit the asymptotic behaviour of the continuous equation, regardless of whether
the continuous equation has stable, bounded but unstable, or unbounded solutions,
provided the step size is chosen sufficiently small.
\end{abstract}

\maketitle

\section{Introduction}
In this paper the asymptotic behaviour of certain discretisations of
perturbed nonlinear ordinary and stochastic differential equations
is considered. We consider the perturbed stochastic differential equation 
\begin{equation} \label{eq.sdeintro}
dX(t)=-f(X(t))\,dt + \sigma(t)\,dB(t), \quad t\geq 0.
\end{equation}
The equation is finite--dimensional, with
$f:\mathbb{R}^d\to\mathbb{R}^d$,
$:[0,\infty)\to\mathbb{R}^{d\times r}$ and $B$ being an
$r$--dimensional standard Brownian motion. We presume that $f$ and $\sigma$ are sufficiently smooth to ensure the existence of
unique solutions. The appropriate conditions are that $f$ is locally
Lipschitz continuous and that $\sigma$ is continuous.
Throughout we assume that the unperturbed differential
equation
\begin{equation} \label{eq.odeunpert}
y'(t)=-f(y(t)), \quad t\geq 0
\end{equation}
has a unique equilibrium which is translated to zero:
\begin{equation} \label{eq.f0unique}
f(x)=0 \quad\text{if and only if } x=0.
\end{equation}
This equilibrium is globally stable by imposing the dissipative condition
\begin{equation} \label{eq.fdiss}
\langle x, f(x)\rangle > 0 , \quad \text{for all $x\neq 0$}.
\end{equation}
Existence of a continuous solution of \eqref{eq.odeunpert} is guaranteed by assuming that
\begin{equation} \label{eq.fcns}
f\in C(\mathbb{R}^d;\mathbb{R}^d)
\end{equation}
The assumptions \eqref{eq.f0unique}, \eqref{eq.fdiss} and \eqref{eq.fcns} imply that all continuous solutions $y$ of \eqref{eq.odeunpert} obey $y(t)\to 0$ as $t\to\infty$.

The purpose of this paper is to examine whether we can mimick the asymptotic behaviour of the solutions of
\eqref{eq.sdeintro} under discretisation. 
This should be achieved using only the conditions required to ensure
stability, boundedness or unboundedness in the continuous--time case. A particular challenge is to
perform a successful discretisation even in the case when the
function $f$ is not globally linearly bounded, and with a uniform
mesh size $h>0$ if possible. As already discussed in the previous Chapter, it is known that for such highly nonlinear equations that explicit methods are unlikely to preserve the long run behaviour of solutions; see examples in \cite{MSH:2002} and \cite{HMY:2007}. It has been shown in the deterministic case by Stuart Humphries and for stochastic differential equations
that implicit methods are very useful for achieving such results. For this reason, we have adopted the split--step backward Euler method (SSBE) developed in \cite{HMS:2002,MSH:2002}. This method reduces to the standard backward Euler method for deterministic differential equations~\cite{DekkerVerwer:1984,HairerWanner:1996}. In this work, we demonstrate that the split step backward Euler method for SDEs, which was introduced by Mao, Higham and Stuart, and by Mattingly, Stuart and Higham achieves these ends.

The results in this Chapter extend and improve those presented in~\cite{JAJCAR:2010atlanta}, in which a scalar equation
with a monotone increasing $f$ was considered. A classification of the solutions of scalar linear stochastic differential
equations in continuous time was presented in \cite{JAJCAR:2011dresden}.

\section{The Equation}

\subsection{Set--up of the problem}
Suppose that $(\Omega,\mathcal{F},\mathbb{P})$ is a complete probability space. Suppose that $\xi$ is a stochastic sequence in $\mathbb{R}^r$ with the following property:
\begin{assumption} \label{ass.normal}
$\xi=\{\xi(n):n\geq 1\}$ is a sequence of $r$--dimensional independent and identically distributed Gaussian vectors. Moreover, with the notation
$\xi^{(j)}(n)=\langle \xi(n), e_j\rangle$ for $j=1,\ldots,r$, we assume each of the Gaussian random variables $\xi^{j}(n)$ has zero mean and unit variance, and that $\xi^{(j)}(n)$, $j=1,\ldots,r$ are mutually independent for each $n$.
\end{assumption}
This sequence generates a natural filtration $\mathcal{F}(n):=\sigma\{\xi(j):1\leq j\leq n\}$.
In what follows we denote by $\Phi:\mathbb{R}\to \mathbb{R}$ the distribution of a standard normal random variable 
 i.e.,
\begin{equation} \label{def.Phi}
\Phi(x)=\frac{1}{\sqrt{2\pi}}\int_{-\infty}^x e^{-u^2/2}\,du, \quad x\in\mathbb{R}.
\end{equation}
We interpret $\Phi(-\infty)=0$ and $\Phi(\infty)=1$.

We often suppose that $f$ obeys 
\begin{equation} \label{eq.fglobalunperturbed}
f\in C(\mathbb{R}^d;\mathbb{R}^d); \quad \langle x,f(x)\rangle >0, \quad \text{for all $x\in \mathbb{R}^d\setminus\{0\}$}, \quad f(0)=0.
\end{equation}
\begin{remark}\label{re.dissi}
If $f:\mathbb{R}^d\to\mathbb{R}^d$ obeys \eqref{eq.fglobalunperturbed}, the equilibrium at zero of the
unperturbed equation is unique. Suppose to the contrary that there
is $x^\ast\neq 0$ such that $f(x^\ast)=0$. Then $0<\langle
x^\ast,f(x^\ast)\rangle=\langle x^\ast,0\rangle =0$, a
contradiction.
\end{remark}
Suppose also that
\begin{equation} \label{eq.Sigcns}
\Sigma\in C([0,\infty);\mathbb{R}^{d\times r}).
\end{equation}
We consider uniform discretisation of the stochastic differential equation
\begin{equation} \label{eq.sdefinite}
dX(t)=-f(X(t))\,dt + \Sigma(t)\,dB(t), \quad t\geq 0; \quad X(0)=\zeta\in\mathbb{R}^d.
\end{equation}
If, for example, we wish to guarantee the existence of a unique strong solution of \eqref{eq.sdefinite}, we may assume that
$f$ is locally Lipschitz continuous on $\mathbb{R}^d$ or satisfies a global one--sided Lipschitz condition.

However, if one wants only to assure the existence of a solution, the continuity of $f$ and $\sigma$ guarantee the existence of a local solution.
Moreover, the second part of condition \eqref{eq.fglobalunperturbed} guarantees that any such continuous solution does not
explode in finite time almost surely, so we have global existence of the solution.
 Local existence and uniqueness is standard from e.g., \cite{Mao1};
a proof of non--explosion and global existence is given in \cite{JAJGAR:2009}.

\subsection{Construction of the discretisation and existence and uniqueness of its solutions}
We propose to discretise the strong solution $X$ of \eqref{eq.sdefinite} as follows. Let $h>0$, and let
$\sigma_h:\mathbb{N}_0\to\mathbb{R}^{d\times r}$ be a $d\times r$--matrix valued sequence with real entries.
Let $\xi$ be the sequence defined by Assumption~\ref{ass.normal}. Consider the system of stochastic difference equations described by
\begin{subequations}\label{eq.splitstep}
\begin{gather} \label{eq.ssX0}
X_h(0)=\zeta; \\
\label{eq.SSXast}
X_h^\star(n)=X_h(n)-hf(X_h^\star(n)), \quad n\geq 0; \\
\label{eq.SSupdate}
X_h(n+1)=X_h^\star(n)+\sqrt{h}\sigma_h(n)\xi(n+1), \quad n\geq 0.
\end{gather}
\end{subequations}
\eqref{eq.SSXast}, \eqref{eq.SSupdate} with the initial condition \eqref{eq.ssX0} is the so--called \emph{split--step method} for
discretising the stochastic differential equation
\eqref{eq.sdefinite}. This makes sense if we presume that $\sigma_h(n)=\Sigma(nh)$ for $n\geq 0$, where $\Sigma$
is the diffusion coefficient in \eqref{eq.sdefinite}.

\subsection{Existence and uniqueness of solutions of split--step scheme}
We \emph{assume at first} that \eqref{eq.splitstep} has at least one well--defined solution
This is assured by the following deterministic--- and potentially $h$--dependent--- condition on $f$.
\begin{assumption} \label{ass.existl}
For every $x\in\mathbb{R}^d$ there exists $x^\star\in \mathbb{R}^d$ such that
\begin{equation} \label{eq.splitimplicit}
x^\star=x-hf(x^\star).
\end{equation}
\end{assumption}
In this situation, we say that \eqref{eq.splitstep} has a solution if there is a pair of processes $(X_h,X^\star_h)$ which obey
\eqref{eq.splitstep}. Such a solution will automatically be global (i.e, defined for all $n\geq 0$): there is no
possibility of finite time explosion, because each member of the sequence $\xi$ is a.s. finite.
Such a solution will be adapted to the natural filtration generated by $\xi$.
\begin{remark} \label{rem.scalarexist}
In the scalar case $(d=1)$, and $f$ obeys \eqref{eq.fglobalunperturbed}, then Assumption~\ref{ass.existl} is satisfied.
\end{remark}
\begin{proof}
Consider for each $x\in\mathbb{R}$ the function $G_x:\mathbb{R}\to\mathbb{R}$
\[
G_x(y)=y-x+hf(y), \quad y\in\mathbb{R}.
\]
Notice that the continuity of $f$ ensures that $G_x$ is continuous. Then $G_x(0)=-x$ and $G_x(x)=hf(x)$.
Therefore by \eqref{eq.fglobalunperturbed}, $G_x(0)G_x(x)=-hxf(x)<0$ for $x\neq 0$, so that there is a solution
$x^\star$ of \eqref{eq.splitimplicit} between $0$ and $x$ for every $x\neq 0$. In the case when $x=0$, we have $yG_0(y)=y^2 + yhf(y)>0$
for $y\neq 0$ and $G_0(0)=0$. Thus $0$ is the only solution of \eqref{eq.splitimplicit} in the case when $x=0$.
\end{proof}
Conditions can be imposed on $f$ which guarantee that there is a unique solution of \eqref{eq.splitstep}.
These include $f$ obeying the so--called one--sided (global) Lipschitz condition
\[
(f(x)-f(y))(x-y)\leq \mu (x-y)^2, \quad \text{for all $x,y\in\mathbb{R}$}
\]
and some $\mu\in\mathbb{R}$. This condition guarantees the existence of a unique solution of \eqref{eq.splitimplicit}
provided the step size $h$ is chosen to be sufficiently small. Although this is weaker than requesting that $f$ satisfy a
global Lipschitz condition, it places a restriction on $f$ on all $\mathbb{R}$, and still excludes some functions $f$ which
grow faster than polynomially as $|x|\to\infty$.

In this chapter, we do not worry about the uniqueness of the solution of \eqref{eq.splitstep}.
Instead, we show that \emph{all} solutions of the equation will have the correct asymptotic behaviour.
This is in the spirit of generalised dynamical systems considered by Stuart and Humphries~\cite{SH:1998}.
This enables us to impose a weaker regularity condition on $f$ and to therefore consider a wider class of
functions $f$ than are covered by the one--sided Lipschitz condition. But if uniqueness of the solution of
\eqref{eq.splitstep} is required, we are still free to impose extra conditions on $f$.

\subsection{Mean reversion of split--step method under \eqref{eq.fglobalunperturbed}}
Before proceeding, it is worthwhile to note that the first, ``deterministic'' equation in the split--step method (namely \eqref{eq.SSXast}) forces the intermediate estimate $X^\star_h$ to always be closer to the equilibrium than $X_h$.
\begin{lemma} \label{lem.Xastcontract}
Suppose $(X_h,X^\star_h)$ is a solution of \eqref{eq.splitstep} and that $f$ obeys \eqref{eq.fglobalunperturbed}. Then for each $n\in \mathbb{N}$,
\begin{equation*}
0<\|X_h^\star(n)\|<\|X_h(n)\|  \text{ if $\|X_h(n)\|>0$, and $X_h^\star(n)=0$ if and only if $X_h(n)=0$}.
\end{equation*}
\end{lemma}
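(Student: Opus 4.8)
The plan is to argue pointwise at a fixed index $n$, writing $x=X_h(n)$ and $x^\star=X_h^\star(n)$, so that the defining relation \eqref{eq.SSXast} becomes $x=x^\star+hf(x^\star)$. Everything then reduces to a purely algebraic statement about the dissipativity hypothesis \eqref{eq.fglobalunperturbed}; no probabilistic input is required, since the identity holds pathwise for every $\omega$.

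First I would settle the equivalence $x^\star=0$ if and only if $x=0$, since the strict inequality below depends on it. The forward direction is immediate: if $x^\star=0$ then $x=0+hf(0)=0$ because $f(0)=0$. For the converse, suppose $x=0$, so that $x^\star=-hf(x^\star)$; taking the inner product of this identity with $x^\star$ gives $\|x^\star\|^2=-h\langle x^\star,f(x^\star)\rangle$. If $x^\star\neq 0$, the left-hand side is strictly positive while, by \eqref{eq.fglobalunperturbed}, the right-hand side is strictly negative, a contradiction; hence $x^\star=0$.

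The main inequality then follows from a single computation. Assuming $\|x\|>0$, the equivalence just proved forces $x^\star\neq 0$, so $\|x^\star\|>0$, which is the left-hand inequality. Squaring the relation $x=x^\star+hf(x^\star)$ yields
\[
\|x\|^2=\|x^\star\|^2+2h\langle x^\star,f(x^\star)\rangle+h^2\|f(x^\star)\|^2.
\]
Since $x^\star\neq 0$, the cross term $2h\langle x^\star,f(x^\star)\rangle$ is strictly positive by \eqref{eq.fglobalunperturbed}, and the final term is nonnegative, so $\|x\|^2>\|x^\star\|^2$, which is precisely $\|x^\star\|<\|x\|$.

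I do not expect any serious obstacle: the whole argument is elementary, and the only genuine idea is to expand the squared norm so that the dissipativity condition surfaces as the cross term. The one point that repays care is that the strict inequality $\langle x^\star,f(x^\star)\rangle>0$ is available only when $x^\star\neq 0$, which is exactly why the equivalence $x^\star=0 \iff x=0$ must be proved first; once that is in place, the rest is a direct consequence of the single displayed expansion.
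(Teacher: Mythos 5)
Your proof is correct, and its skeleton matches the paper's: both arguments first settle the equivalence $X_h^\star(n)=0$ if and only if $X_h(n)=0$ (your contradiction argument for the converse --- taking the inner product of $x^\star=-hf(x^\star)$ with $x^\star$ to get $0<\|x^\star\|^2=-h\langle x^\star,f(x^\star)\rangle<0$ --- is essentially word-for-word the paper's), and both then extract the strict inequality from the sign of the cross term $\langle x^\star,f(x^\star)\rangle$. The one step where you genuinely diverge is the inequality itself: the paper takes the inner product of \eqref{eq.SSXast} with $X_h^\star(n)$, obtaining $\|X_h^\star(n)\|^2<\langle X_h(n),X_h^\star(n)\rangle$, then invokes the Cauchy--Schwarz inequality and cancels a factor of $\|X_h^\star(n)\|>0$; you instead square the relation $x=x^\star+hf(x^\star)$ to get
\[
\|x\|^2=\|x^\star\|^2+2h\langle x^\star,f(x^\star)\rangle+h^2\|f(x^\star)\|^2>\|x^\star\|^2,
\]
which yields $\|x^\star\|<\|x\|$ directly. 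Your variant is marginally more self-contained, since it bypasses Cauchy--Schwarz, and the identity you expand is precisely \eqref{eq.Xh2Xhst2}, which the paper itself derives later in the proof of Lemma~\ref{lemma.repX2}; the paper's route buys nothing extra here. You are also right that the strictness of the cross term is only available for $x^\star\neq 0$, which is why the zero-equivalence must come first --- the paper orders its argument for exactly the same reason.
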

\begin{proof}
To prove part (a), suppose first that $\|X_h(n)\|>0$. Notice from \eqref{eq.SSXast} that $X_h^\star(n)=0$ implies that $X_h(n)=0$, so we have
$\|X_h^\star(n)\|>0$. By taking the innerproduct with $X_h^\star(n)$ on each side of \eqref{eq.SSXast}, and using the second statement in \eqref{eq.fglobalunperturbed} we get
\[
\|X_h^\star(n)\|^2=\langle X_h(n), X_h^\star(n)\rangle -h\langle f(X_h^\star(n)),X_h^\star(n)\rangle< \langle X_h(n), X_h^\star(n)\rangle.
\]
Applying the Cauchy--Schwartz inequality to the rightmost inequality, this implies that $\|X_h^\star(n)\|^2<\|X_h(n)\|\|X_h^\star(n)\|$, as required.

We have already seen that $X_h^\star(n)=0$ implies that $X_h(n)=0$. To prove the converse, let $X_h(n)=0$ and suppose that $\|X_h^\star(n)\|>0$.
From \eqref{eq.SSXast} we have $X_h^\star(n)=-hf(X_h^\star(n))$, so taking the innerproduct as before and using \eqref{eq.fglobalunperturbed} yields $0<\|X_h^\star(n)\|^2=-h\langle f(X_h^\star(n)),X_h^\star(n)\rangle<0$, a contradiction.
\end{proof}

\section{Statement and Discussion of Main Results}
\subsection{Affine equations}
Before discussing the asymptotic behaviour of solutions of \eqref{eq.splitstep}, it is fruitful to first understand the asymptotic behaviour of the
$d$--dimensional sequence $U_h=\{U_h(n):n\geq 1\}$ defined by
\begin{equation} \label{def.U}
U_h(n+1)=\sqrt{h}\sigma_h(n)\xi(n+1), \quad n\geq 0
\end{equation}
Define
\begin{equation} \label{def.Seps}
S_h(\epsilon)= \sum_{n=0}^\infty \left\{ 1-\Phi\left(\frac{\epsilon}{\|\sigma_h(n)\|_F}  \right)  \right\}.
\end{equation}
Notice that $S_h(\epsilon)$ is monotone in $\epsilon>0$. Therefore, there are only three possible types of behaviour for $S$, for a given $\sigma_h$,
namely: (i) $S_h(\epsilon)<+\infty$ for all $\epsilon>0$; (ii) $S_h(\epsilon)=+\infty$ for all $\epsilon>0$; and (iii) $S_h(\epsilon)<+\infty$
for all $\epsilon>\epsilon'>0$ and $S_h(\epsilon)=+\infty$ for all $\epsilon<\epsilon'$. Due to this trichotomy, it can be seen
that the following result enables the long--run pathwise behaviour of $U_h(n)$ to be classified in terms of $S_h$.
\begin{lemma} \label{lemma.Uasy}
Let
$\xi=\{\xi(n)\in\mathbb{R}^r:n\in\mathbb{N}\}$ be a sequence of
random vectors obeying Assumption~\ref{ass.normal}.
Let $U_h$ be given by \eqref{def.U}, and $S_h(\epsilon)$ be defined by \eqref{def.Seps}.
\begin{itemize}
\item[(A)] If $S_h(\epsilon)<\infty$ for all $\epsilon>0$, then
\begin{equation} \label{eq.Uto0}
\lim_{n\to\infty} U_h(n)=0, \quad \text{a.s.}
\end{equation}
\item[(B)] If $S_h(\epsilon)=+\infty$ for all $\epsilon>0$, then
\begin{equation} \label{eq.Utoinfty}
\limsup_{n\to\infty} \|U_h(n)\|=+\infty, \quad\text{a.s.}
\end{equation}
\item[(C)] If $S_h(\epsilon)<+\infty$ for all $\epsilon>\epsilon'$, and
$S_h(\epsilon)=+\infty$ for all $\epsilon<\epsilon'$, then there exist deterministic $0<c_1\leq c_2<+\infty$ such that
\begin{equation}  \label{eq.Ubounds}
 c_1\leq \limsup_{n\to\infty} \|U_h(n)\|\leq c_2<+\infty, \quad\text{a.s.}
 \end{equation}
\end{itemize}
\end{lemma}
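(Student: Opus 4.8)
The plan is to analyse the sequence $U_h(n)=\sqrt h\,\sigma_h(n-1)\xi(n)$ through the tail events $A_n(\epsilon)=\{\|U_h(n)\|\geq\epsilon\}$ and to apply the Borel--Cantelli lemmas. Because $\xi(n)$ is Gaussian, the key observation is that $\mathbb P[\|U_h(n)\|\geq\epsilon]$ can be controlled above and below by the one--dimensional Gaussian tail $1-\Phi(\epsilon/\|\sigma_h(n-1)\|_F)$ that appears in the definition of $S_h(\epsilon)$. The first step is therefore to establish such two--sided estimates. For the upper bound I would project onto the unit vector realising $\|\sigma_h(n-1)\xi(n)\|$, or more crudely use $\|\sigma_h(n-1)\xi(n)\|\leq\|\sigma_h(n-1)\|_F\,\|\xi(n)\|$, together with a bound on the tail of $\|\xi(n)\|$; for the lower bound I would pick a fixed unit vector $v_n$ with $\|\sigma_h(n-1)^{\!\top}v_n\|$ comparable to $\|\sigma_h(n-1)\|_F$ (for instance the column of largest norm) so that $\langle U_h(n),v_n\rangle$ is a centred Gaussian whose standard deviation is of order $\sqrt h\,\|\sigma_h(n-1)\|_F$, and then invoke the standard Gaussian tail asymptotics. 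The upshot is that there are constants $k_1,k_2>0$ (depending only on $h,r$) with $k_1\{1-\Phi(k_2\epsilon/\|\sigma_h(n-1)\|_F)\}\leq\mathbb P[A_n(\epsilon)]\leq k_2\{1-\Phi(k_1\epsilon/\|\sigma_h(n-1)\|_F)\}$, so that the convergence of $\sum_n\mathbb P[A_n(\epsilon)]$ is, after rescaling $\epsilon$, equivalent to the convergence of $S_h$.

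For part (A), if $S_h(\epsilon)<\infty$ for every $\epsilon>0$, then $\sum_n\mathbb P[A_n(\epsilon)]<\infty$ for each fixed $\epsilon$, and the first Borel--Cantelli lemma gives $\mathbb P[\limsup_n A_n(\epsilon)]=0$; hence $\|U_h(n)\|<\epsilon$ eventually, almost surely. Taking $\epsilon=1/m$ along a countable sequence and intersecting the resulting full--measure events yields $\lim_{n\to\infty}U_h(n)=0$ almost surely, which is \eqref{eq.Uto0}. For part (B), the crucial structural fact is that the vectors $\xi(n)$ are independent, so the events $A_n(\epsilon)$ are independent; when $S_h(\epsilon)=+\infty$ for all $\epsilon$, the lower tail bound forces $\sum_n\mathbb P[A_n(\epsilon)]=\infty$, and the second Borel--Cantelli lemma gives $\mathbb P[\limsup_n A_n(\epsilon)]=1$ for every $\epsilon$. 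Thus $\|U_h(n)\|\geq\epsilon$ infinitely often almost surely, for arbitrarily large $\epsilon$; intersecting over $\epsilon=m\in\mathbb N$ delivers $\limsup_n\|U_h(n)\|=+\infty$ almost surely, which is \eqref{eq.Utoinfty}.

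Part (C) is handled by the same dichotomy applied on either side of the critical value $\epsilon'$. For $\epsilon>\epsilon'$ the summability of $S_h$ and the first Borel--Cantelli lemma give $\limsup_n\|U_h(n)\|\leq\epsilon$ almost surely; letting $\epsilon\downarrow\epsilon'$ shows $\limsup_n\|U_h(n)\|\leq c_2$ for some finite $c_2$ of order $\epsilon'$. For $\epsilon<\epsilon'$ the divergence of $S_h$, independence, and the second Borel--Cantelli lemma give $\limsup_n\|U_h(n)\|\geq\epsilon$ almost surely; letting $\epsilon\uparrow\epsilon'$ yields $\limsup_n\|U_h(n)\|\geq c_1>0$. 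The constants $c_1,c_2$ are deterministic since they are governed only by $\epsilon'$ and the fixed ratios $k_1,k_2$, and the almost--sure constancy of $\limsup_n\|U_h(n)\|$ itself follows from the Kolmogorov zero--one law, as it is a tail event with respect to the independent sequence $\xi$.

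I expect the main obstacle to be the two--sided Gaussian tail comparison in the multivariate setting: one must convert a statement about $\|\sigma_h(n-1)\xi(n)\|$, a norm of a (possibly degenerate) Gaussian vector, into the scalar quantity $1-\Phi(\epsilon/\|\sigma_h(n-1)\|_F)$ uniformly in $n$, and in particular the lower bound requires care when the matrices $\sigma_h(n)$ are nearly singular or have a single dominant singular direction. The rescaling constants $k_1,k_2$ must be chosen independently of $n$ so that the comparison between $\sum_n\mathbb P[A_n(\epsilon)]$ and $S_h$ survives; the trichotomy noted before the lemma is precisely what makes this rescaling harmless, since shifting $\epsilon$ by a constant factor does not change which of the three regimes $S_h$ falls into.
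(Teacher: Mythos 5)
Your proposal is correct and follows essentially the same route as the paper: a two-sided comparison of $\mathbb{P}[\|U_h(n)\|\geq\epsilon]$ with Gaussian tails of the form $1-\Phi(c\,\epsilon/\|\sigma_h(n-1)\|_F)$ up to dimension-dependent rescalings of $\epsilon$ (the paper carries this out coordinatewise in the $\ell^1$ norm), followed by the first Borel--Cantelli lemma for (A), independence of the $U_h(n)$ together with the second Borel--Cantelli lemma for (B), and both estimates on either side of $\epsilon'$ for (C), with the observation that constant rescalings of $\epsilon$ do not change the regime of $S_h$. One small repair to your lower tail bound: take $v_n$ in the direction of the \emph{row} of $\sigma_h(n-1)$ of largest norm (equivalently, the coordinate of $U_h(n)$ of largest variance) or the top left singular vector, not the largest column, so that $\|\sigma_h(n-1)^{\top}v_n\|_2\geq\|\sigma_h(n-1)\|_F/\sqrt{d}$ uniformly in $n$.
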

This result enables us to classify the asymptotic behaviour of the discretisation of the $d$--dimensional affine
stochastic differential equation
\begin{equation} \label{eq.linearSDE}
dY(t)=AY(t)\,dt + \Sigma(t)\,dB(t), \quad t\geq 0; \quad X(0)=\zeta,
\end{equation}
where $A$ is a $d\times d$ matrix with real entries. We assume
that all solutions of the underlying deterministic differential equation
\begin{equation} \label{eq.linearode}
y'(t)=Ay(t), \quad t>0, \quad x(0)=\zeta
\end{equation}
obey $y(t)\to 0$ as $t\to\infty$. This means that
\begin{equation} \label{eq.linstab}
\text{$\text{Re}(\lambda)<0$ for all eigenvalues $\lambda$ of $A$}.
\end{equation}
Let $c_A$ be the characteristic polynomial of $A$, so that $c_A(\lambda)=\det(\lambda I_d -A)$. By \eqref{eq.linstab}, it follows that
there are no positive real solutions of the characteristic equation $c_A(\lambda)=0$. In particular, $c_A(1/h)\neq 0$ for every $h>0$, so
we have that $\det(I-hA)\neq 0$ and therefore the matrix $C(h)$ given by
\begin{equation} \label{def.Ch}
C(h)=(I-hA)^{-1}
\end{equation}
is well--defined. Therefore, there is a unique solution of the split--step scheme
\begin{subequations} \label{eq.splitsteplin}
\begin{align}
\label{eq.linic}
Y_h(0)&=\zeta, \\
\label{eq.linss}
Y^\star_h(n)&=Y_h(n)+hAY^\star_h(n), \quad n\geq 0, \\
\label{eq.linupdate}
Y_h(n+1)&=Y_h^\star(n)+\sqrt{h}\sigma_h(n)\xi(n+1), \quad n\geq 0.
\end{align}
\end{subequations}
which is equivalent to
\[
Y_h(n+1)=C(h)Y_h(n)+\sqrt{h}\sigma_h(n)\xi(n+1), \quad n\geq 0; \quad Y_h(0)=\zeta.
\]
The asymptotic behaviour of $Y_h$ can now be given.
\begin{theorem}  \label{thm.linsplit}
Suppose that $A\in \mathbb{R}^{d\times d}$ obeys \eqref{eq.linstab}. Let $\xi=\{\xi(n)\in\mathbb{R}^r:n\in\mathbb{N}\}$ be a sequence of
random vectors obeying Assumption~\ref{ass.normal}. Let $S_h(\epsilon)$ be defined by \eqref{def.Seps}, and
$(Y_h,Y_h^\star)$ be the unique solution of \eqref{eq.splitsteplin}.
\begin{itemize}
\item[(A)] If $S_h(\epsilon)<+\infty$ for every $\epsilon>0$, then $Y_h(n)\to 0$ as $n\to\infty$ a.s.
\item[(B)] If there exists $\epsilon'>0$ such that $S_h(\epsilon)<+\infty$ for all $\epsilon>\epsilon'$
and $S_h(\epsilon)=+\infty$ for all $\epsilon<\epsilon'$, then there exist deterministic $0<c_3\leq c_4<+\infty$ such that
\[
c_3\leq \limsup_{n\to\infty} \|Y_h(n)\|\leq c_4, \quad \text{a.s.}
\]
and 
\[
\liminf_{n\to\infty} \|Y_h(n)\|=0, \quad \lim_{n\to\infty} \frac{1}{n}\sum_{j=1}^n \|Y_h(j)\|^2=0, \quad\text{a.s.}
\]
\item[(C)] If $S_h(\epsilon)=+\infty$ for all $\epsilon>0$, then $\limsup_{n\to\infty} \|Y_h(n)\|=+\infty$ a.s.
\end{itemize}
\end{theorem}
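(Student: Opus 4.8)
The plan is to solve the linear recursion explicitly and to reduce every case to the behaviour of the pure--noise sequence classified in Lemma~\ref{lemma.Uasy}. Writing $U_h(n+1)=\sqrt h\,\sigma_h(n)\xi(n+1)$ as in \eqref{def.U}, the scheme \eqref{eq.splitsteplin} gives $Y_h(n)=C(h)^n\zeta+Z_h(n)$, where $Z_h(n)=\sum_{j=0}^{n-1}C(h)^{\,n-1-j}U_h(j+1)$ and $C(h)$ is defined in \eqref{def.Ch}. The stability condition \eqref{eq.linstab} forces every eigenvalue $(1-h\lambda)^{-1}$ of $C(h)=(I-hA)^{-1}$ to have modulus strictly below one, so the spectral radius of $C(h)$ is less than one and there are constants $M\ge 1$, $\beta\in(0,1)$ with $\|C(h)^k\|\le M\beta^k$. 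Hence $C(h)^n\zeta\to0$ and $\Gamma:=\sum_{k\ge0}\|C(h)^k\|<\infty$, so throughout it suffices to study $Z_h$ in place of $Y_h$.

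For part (A), Lemma~\ref{lemma.Uasy}(A) gives $U_h(n)\to0$ a.s.; since $Z_h$ is the convolution of the summable kernel $C(h)^k$ with a null sequence, a Toeplitz/dominated--convergence estimate (split the sum at a level beyond which $\|U_h\|<\varepsilon$, bound the early part by $\varepsilon\Gamma$, and note the finitely many late terms vanish because $\|C(h)^k\|\to0$) yields $Z_h(n)\to0$ and hence $Y_h(n)\to0$. For part (C), Lemma~\ref{lemma.Uasy}(B) gives $\limsup\|U_h(n)\|=\infty$; rearranging the recursion as $U_h(n+1)=Y_h(n+1)-C(h)Y_h(n)$ and using $\|U_h(n+1)\|\le(1+\|C(h)\|)\sup_{m\ge n}\|Y_h(m)\|$ shows that a finite $\limsup\|Y_h\|$ would force a finite $\limsup\|U_h\|$, a contradiction, so $\limsup\|Y_h(n)\|=\infty$.

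Part (B) is the substantive case; here Lemma~\ref{lemma.Uasy}(C) gives $c_1\le\limsup\|U_h(n)\|\le c_2$ a.s. The upper bound $c_4=c_2\Gamma$ comes from the same Toeplitz estimate as in (A), now using $\limsup\|U_h\|\le c_2$, while the lower bound follows by taking $\limsup$ in $U_h(n+1)=Y_h(n+1)-C(h)Y_h(n)$ to obtain $c_1\le(1+\|C(h)\|)\limsup\|Y_h(n)\|$, so $c_3=c_1/(1+\|C(h)\|)$. A key observation for the remaining two assertions is that in case (B) the intensity vanishes: convergence of $S_h(\epsilon)$ in \eqref{def.Seps} for $\epsilon>\epsilon'$ forces its summand, and hence $\|\sigma_h(n)\|_F$, to tend to zero. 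To prove $\liminf\|Y_h(n)\|=0$ I would exploit this decay together with the independence in Assumption~\ref{ass.normal}: fix $\varepsilon>0$ and a deterministic block length $L$ with $M\beta^L(c_4+1)<\varepsilon/2$; for a fixed threshold $T$ the events that $\|\xi(j+1)\|\le T$ throughout disjoint blocks $[m_i,m_i+L)$ are independent with a common positive probability, so by the second Borel--Cantelli lemma infinitely many occur. On such a block, taken far enough out that $\|\sigma_h\|_F$ is small and $\|Z_h(m_i)\|\le c_4+1$, the identity $Z_h(m_i+L)=C(h)^LZ_h(m_i)+\sum_{j=m_i}^{m_i+L-1}C(h)^{\,m_i+L-1-j}U_h(j+1)$ forces $\|Z_h(m_i+L)\|<\varepsilon$, whence $\liminf\|Z_h\|=\liminf\|Y_h\|=0$ after letting $\varepsilon\downarrow0$ along a countable sequence.

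For the averaged statement I would introduce an inner--product norm $\|\cdot\|_Q$ in which $C(h)$ is a strict contraction, $\|C(h)x\|_Q^2\le\alpha\|x\|_Q^2$ with $\alpha<1$, and expand $\|Y_h(n+1)\|_Q^2$; summing the resulting inequality and dividing by $n$ controls the Cesàro average of $\|Y_h\|_Q^2$ by three vanishing pieces: a boundary term of order $1/n$, the average of the cross terms (which form a martingale difference sequence and vanish by the martingale strong law, using that $\|Y_h\|$ is bounded and $\|\sigma_h\|_F\to0$), and $n^{-1}\sum_{k\le n}\|U_h(k)\|_Q^2\to0$ (a weighted Kolmogorov strong law, since $E\|U_h(k)\|^2=h\|\sigma_h(k-1)\|_F^2\to0$ and the variances of the independent quadratic increments are summable against $k^{-2}$); norm equivalence then gives the claim for $\|\cdot\|$. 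I expect this averaged assertion to be the main obstacle, since, unlike the pathwise limits, it genuinely mixes the deterministic contraction with the stochastic forcing, and care is needed both to package the cross terms as a convergent martingale and to verify the Kolmogorov summability condition for the non--identically distributed but independent quadratic noise increments; the $\liminf$ statement is the next most delicate point, hinging entirely on the fact that $\|\sigma_h(n)\|_F\to0$ in case (B) so that long stretches of moderate $\xi$ allow the contraction to draw $Y_h$ arbitrarily close to the origin.
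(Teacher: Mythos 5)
Your proposal is correct, and its overall skeleton coincides with the paper's: both solve the scheme explicitly as $Y_h(n)=C(h)^n\zeta+\sum_{j}C(h)^{n-j}U_h(j)$, verify via the characteristic polynomial that \eqref{eq.linstab} puts all eigenvalues of $C(h)=(I-hA)^{-1}$ strictly inside the unit disc, reduce everything to the trichotomy for $U_h$ in Lemma~\ref{lemma.Uasy}, and prove the Ces\`aro statement by a quadratic Lyapunov argument (your contraction norm $\|\cdot\|_Q$ is exactly the norm induced by the matrix $M$ solving $C^TMC-M=-I_d$, which the paper obtains from Lemma~\ref{lemma.rugh}), with the same three ingredients you list: an $O(1/n)$ boundary term, the martingale law of large numbers for the cross terms (the paper splits on $\langle K\rangle(n)\to\infty$ versus $\langle K\rangle(\infty)<\infty$, using boundedness of $Y_h$ and $\|\sigma_h(n)\|_F\to 0$), and a fourth--moment strong law for the independent increments $\|U_h(k)\|^2$ (the paper's Lemma~\ref{lemma.sigmahxiLLN}, via Williams' Theorem~7.2), precisely matching your weighted Kolmogorov step. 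There are two genuine local differences. First, the paper obtains part (C) and the lower bound in (B) by citing the nonlinear Theorem~\ref{theorem.XunboundedXboundedbelow}, whereas you prove them directly by rearranging $U_h(n+1)=Y_h(n+1)-C(h)Y_h(n)$; your version is self--contained and gives an explicit constant $c_3=c_1/(1+\|C(h)\|)$, at the cost of not reusing machinery already built for the nonlinear case. Second, and more substantively, for $\liminf_{n\to\infty}\|Y_h(n)\|=0$ the paper gets this in one line as a corollary of the averaged statement (a positive liminf would force a positive Ces\`aro average of $\|Y_h\|^2$), while you construct an independent--blocks second Borel--Cantelli argument: long stretches on which all $\|\xi(j+1)\|\le T$, combined with $\|\sigma_h(n)\|_F\to 0$ and the geometric decay $\|C(h)^L\|\le M\beta^L$, drive $Y_h$ below any $\varepsilon$. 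Your block argument checks out (the blocks are genuinely independent, $\mathbb{P}[\|\xi(1)\|\le T]^L>0$ is constant, and the a.s.\ bound $\limsup\|Z_h(n)\|\le c_4$ makes the contraction estimate uniform), and it buys a proof of the liminf that does not depend on the Ces\`aro statement at all; the paper's route is shorter but makes the liminf logically downstream of the averaged limit. Had you noticed this implication you could have deleted the entire block construction, but as written nothing in your argument fails.
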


\subsection{Nonlinear equation}
We now discuss the asymptotic behaviour of solutions of \eqref{eq.splitstep}.
We first show that $X_h$ has a zero limit in the case when $\sigma_h$ is square summable, without
placing any condition on $f$ stronger than \eqref{eq.fglobalunperturbed}.
This is a direct analogue of results available in continuous time.
\begin{theorem} \label{thm.sig2l1Xto0}
Suppose that $(X_h,X^\star_h)$ is a solution of
\eqref{eq.splitstep}. Suppose $f$ obeys
\eqref{eq.fglobalunperturbed}, $\sigma_h\in \ell^2(\mathbb{N},\mathbb{R})$, and that the
sequence $\xi$ obeys Assumption~\ref{ass.normal}. Then $\lim_{n\to \infty} X_h(n)=0$, a.s.
\end{theorem}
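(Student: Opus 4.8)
The plan is to treat $V_n:=\|X_h(n)\|^2$ as a nonnegative almost--supermartingale and invoke the Robbins--Siegmund convergence theorem. First I would record the measurability and integrability that make this legitimate. Since every solution of \eqref{eq.splitstep} is adapted, $X_h^\star(n)$ is $\mathcal{F}(n)$--measurable, and Lemma~\ref{lem.Xastcontract} gives $\|X_h^\star(n)\|\le\|X_h(n)\|$. Feeding this into \eqref{eq.SSupdate} yields $\|X_h(n)\|\le\|\zeta\|+\sqrt h\sum_{j=0}^{n-1}\|\sigma_h(j)\|_F\,\|\xi(j+1)\|$, and since the Gaussian increments have moments of all orders, $\mathbb{E}[V_n]<\infty$ for each $n$.

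Second, I would compute the one--step conditional expectation. Squaring \eqref{eq.SSupdate} and using that $\xi(n+1)$ is independent of $\mathcal{F}(n)$ with zero mean and identity covariance annihilates the cross term and gives $\mathbb{E}[\|\sigma_h(n)\xi(n+1)\|^2\mid\mathcal{F}(n)]=\|\sigma_h(n)\|_F^2$. Rewriting the implicit step \eqref{eq.SSXast} as $X_h(n)=X_h^\star(n)+hf(X_h^\star(n))$ and expanding the square shows
\[
\|X_h(n)\|^2=\|X_h^\star(n)\|^2+2h\langle f(X_h^\star(n)),X_h^\star(n)\rangle+h^2\|f(X_h^\star(n))\|^2 .
\]
Writing $\eta_n:=2h\langle f(X_h^\star(n)),X_h^\star(n)\rangle+h^2\|f(X_h^\star(n))\|^2\ge 0$, which is nonnegative by the dissipativity in \eqref{eq.fglobalunperturbed}, this reads $\|X_h^\star(n)\|^2=\|X_h(n)\|^2-\eta_n$, and combining with the noise term gives
\[
\mathbb{E}[V_{n+1}\mid\mathcal{F}(n)]=V_n-\eta_n+h\|\sigma_h(n)\|_F^2 .
\]

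Third, since $\sigma_h\in\ell^2$ the deterministic perturbation $\sum_n h\|\sigma_h(n)\|_F^2$ is finite, so the Robbins--Siegmund theorem (nonnegative almost--supermartingale convergence) applies and yields, almost surely, both that $V_n$ converges to a finite limit $L\ge 0$ and that $\sum_n\eta_n<\infty$. From the latter I read off $\eta_n\to 0$, and hence separately $\langle f(X_h^\star(n)),X_h^\star(n)\rangle\to 0$ and $f(X_h^\star(n))\to 0$.

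Finally, the crux is to upgrade $V_n\to L$ to $L=0$; this is the one place the dissipative structure, rather than bookkeeping, is used. Because $X_h(n)-X_h^\star(n)=hf(X_h^\star(n))\to 0$, the norms satisfy $\|X_h^\star(n)\|\to\sqrt L$, so $\{X_h^\star(n)\}$ is bounded and, pathwise on the full--measure convergence event, admits a subsequential limit $z$ with $\|z\|^2=L$. Continuity of $f$ and of the inner product, together with $\langle f(X_h^\star(n)),X_h^\star(n)\rangle\to 0$, force $\langle f(z),z\rangle=0$, whence $z=0$ by the strict dissipativity in \eqref{eq.fglobalunperturbed} (cf. Remark~\ref{re.dissi}). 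Thus $L=\|z\|^2=0$ and $\|X_h(n)\|\to 0$ almost surely. I expect the only genuine subtlety to be this last coercivity step: the condition $\langle f(x),x\rangle\to 0$ does not by itself force $x\to 0$, so it is essential to first extract boundedness of $X_h^\star$ from the convergence of $V_n$ before passing to a subsequential limit.
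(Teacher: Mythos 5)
Your proposal is correct, and although it shares the paper's overall skeleton---a nonnegative--semimartingale convergence theorem followed by a continuity--plus--dissipativity argument forcing the limit to vanish---your organisation is genuinely different and noticeably shorter. The paper invokes its convergence result (Lemma~\ref{lemma.nonegdif}, which is precisely a Robbins--Siegmund--type statement) with the drift \emph{discarded}, taking $V(n):=0$, and so obtains only that $\|X_h(n)\|^2$ converges; to see that the drift actually vanishes it first proves $\limsup_{n\to\infty}\|X_h(n)\|<+\infty$ through a lengthy preliminary (an induction giving $\mathbb{E}[\|X_h(n)\|^2]<+\infty$, a.s.\ finiteness of the quadratic variation $\langle M\rangle$ via a discrete Gronwall contradiction), then deduces $U_h(n)\to 0$ and extracts $R(X_h^\star(n))\to 0$, with $R$ as in \eqref{def.R}, by rearranging the one--step identity \eqref{eq.Xhnp12Xhn2}. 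By keeping the drift $\eta_n=hR(X_h^\star(n))$ inside the almost--supermartingale inequality, you obtain $\sum_n\eta_n<+\infty$, hence $\eta_n\to 0$, simultaneously with $V_n\to L$, which renders the paper's entire quadratic--variation/Gronwall section and the intermediate step $U_h(n)\to 0$ unnecessary. Your endgame is also more elementary: the paper passes through the radial infimum $R^\ast(y)=\inf_{\|x\|=y}R(x)$ and asserts its continuity (a point that itself needs a short compactness argument not supplied there), whereas your bounded--sequence/subsequential--limit argument avoids $R^\ast$ entirely; and you rightly flag that extracting boundedness of $X_h^\star$ \emph{before} passing to a limit point is what makes the coercivity step sound, since $\langle x,f(x)\rangle\to 0$ alone does not force $x\to 0$. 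A further small gain: because you phrase the recursion through conditional expectations, the summable perturbation is the deterministic sequence $h\|\sigma_h(n)\|_F^2$, which sidesteps a measurability blemish in the paper's application of Lemma~\ref{lemma.nonegdif}, where $W(n)=h\|\sigma_h(n)\xi(n+1)\|^2$ is $\mathcal{F}(n+1)$-- rather than $\mathcal{F}(n)$--measurable as that lemma's hypotheses demand. What the paper's formulation buys is robustness---its pathwise inequality requires no exact computation of conditional moments, only $\sum_n W(n)<+\infty$ a.s.---but with Gaussian noise and deterministic $\sigma_h$ that robustness is not needed, and your argument delivers Theorem~\ref{thm.sig2l1Xto0} in full.
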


We show that when $U_h$ is unbounded, so is $\|X_h\|$, and also that if $U_h$ is bounded, $\|X_h\|$ is bounded away
from zero by a deterministic constant.
\begin{theorem} \label{theorem.XunboundedXboundedbelow}
Suppose that $(X_h,X^\star_h)$ is a solution of \eqref{eq.splitstep}.
Suppose that $f$ obeys \eqref{eq.fglobalunperturbed} and that the sequence $\xi$ obeys Assumption~\ref{ass.normal}.
Let  $S_h(\epsilon)$ be defined by \eqref{def.Seps}.
\begin{itemize}
\item[(A)]
If $S_h(\epsilon)=+\infty$ for every $\epsilon>0$, then 
\[
\limsup_{n\to\infty} \|X_h(n)\|=+\infty, \quad \text{a.s.}
\]
\item[(B)]
If $S_h(\epsilon)<+\infty$ for all $\epsilon>\epsilon'$ and $S_h(\epsilon)=+\infty$ for all $\epsilon<\epsilon'$, then
\[
\limsup_{n\to\infty} \|X_h(n)\|\geq \frac{c_1}{2}, \quad \text{a.s.},
\]
where $c_1$ is defined by \eqref{eq.Ubounds}.
\end{itemize}
\end{theorem}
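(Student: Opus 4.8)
The plan is to exploit the fact that the noise increment $U_h$ can be recovered from two consecutive states of $X_h$, and then to use the contraction property of the deterministic half--step established in Lemma~\ref{lem.Xastcontract} to control the intermediate term. From \eqref{eq.SSupdate} and \eqref{def.U} we have, for each $m\geq 1$,
\begin{equation*}
U_h(m)=X_h(m)-X_h^\star(m-1).
\end{equation*}
The triangle inequality gives $\|U_h(m)\|\leq \|X_h(m)\|+\|X_h^\star(m-1)\|$, and Lemma~\ref{lem.Xastcontract} yields $\|X_h^\star(m-1)\|\leq \|X_h(m-1)\|$ (with equality only in the degenerate case where both vanish). Combining these produces the key inequality
\begin{equation*}
\tfrac{1}{2}\|U_h(m)\|\leq \max\{\|X_h(m)\|,\|X_h(m-1)\|\},\quad m\geq 1,
\end{equation*}
which holds on the whole probability space and expresses that the state cannot be small whenever the noise is large. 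Both asymptotic conclusions of the theorem will be read off from this bound together with Lemma~\ref{lemma.Uasy}.

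For part (A), the hypothesis $S_h(\epsilon)=+\infty$ for every $\epsilon>0$ places us in case (B) of Lemma~\ref{lemma.Uasy}, so $\limsup_{m\to\infty}\|U_h(m)\|=+\infty$ almost surely. On the corresponding almost--sure event I would extract a subsequence $m_k\to\infty$ along which $\|U_h(m_k)\|\to+\infty$; the key inequality then forces $\max\{\|X_h(m_k)\|,\|X_h(m_k-1)\|\}\to+\infty$, whence $\limsup_{n\to\infty}\|X_h(n)\|=+\infty$ almost surely.

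For part (B), the hypothesis places us in case (C) of Lemma~\ref{lemma.Uasy}, so there is a deterministic constant $c_1>0$ with $\limsup_{m\to\infty}\|U_h(m)\|\geq c_1$ almost surely. Fixing $\delta>0$, the inequality $\|U_h(m)\|>c_1-\delta$ holds for infinitely many $m$, and for each such $m$ the key inequality gives $\max\{\|X_h(m)\|,\|X_h(m-1)\|\}\geq \tfrac{1}{2}(c_1-\delta)$. Since this occurs for infinitely many indices, $\limsup_{n\to\infty}\|X_h(n)\|\geq\tfrac{1}{2}(c_1-\delta)$ almost surely, and letting $\delta\downarrow 0$ yields the claimed bound $\limsup_{n\to\infty}\|X_h(n)\|\geq c_1/2$ almost surely.

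The argument is short precisely because the two earlier lemmas carry the analytic weight: Lemma~\ref{lem.Xastcontract} removes any contribution of the nonlinearity $f$, reducing everything to the already--classified behaviour of the affine term $U_h$. The only step requiring genuine care is the passage from the pointwise bound to the $\limsup$ statements, where one must confirm that the exceptional null sets are exactly those coming from Lemma~\ref{lemma.Uasy} and that the subsequence extraction is valid for almost every $\omega$; I expect no serious obstacle beyond this measure--theoretic bookkeeping.
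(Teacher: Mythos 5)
Your proof is correct and follows essentially the same route as the paper: both arguments rest on the identity $U_h(n+1)=X_h(n+1)-X_h^\star(n)$ from \eqref{eq.SSupdate}, the contraction bound $\|X_h^\star(n)\|\leq\|X_h(n)\|$ from Lemma~\ref{lem.Xastcontract}, and the trichotomy for $\limsup_{n\to\infty}\|U_h(n)\|$ in Lemma~\ref{lemma.Uasy}, combined through the triangle inequality. The only cosmetic difference is that the paper phrases part (A) as a contradiction on the event $\{\limsup_{n\to\infty}\|X_h(n)\|<+\infty\}$ and part (B) via limsup arithmetic with $c'(\omega)$ and $c''(\omega)$, whereas you argue directly through a subsequence and a $\delta\downarrow 0$ limit; the underlying estimates are identical.
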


\begin{theorem} \label{thm:Xlim0}
Suppose that $(X_h,X^\star_h)$ is a solution of
\eqref{eq.splitstep}. Suppose that $f$ obeys
\eqref{eq.fglobalunperturbed} 
and that the sequence $\xi$ obeys Assumption~\ref{ass.normal}. Let $S_h(\epsilon)$ be defined by \eqref{def.Seps}.
\begin{itemize}
\item[(i)]If $S_h(\epsilon)<+\infty$ for every $\epsilon>0$, then
\[
\{\lim_{n\to \infty}\|X_h(n)\|=0\}\cup\{\lim_{n\to\infty} \|X_h(n)\|=+\infty\} \text{ is an a.s. event}.
\]
\item[(ii)] If $\lim_{n\to\infty} X_h(n)=0$ with positive probability, then
$S_h(\epsilon)<+\infty$ for every $\epsilon>0$.
\end{itemize}
\end{theorem}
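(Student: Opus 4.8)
The plan is to handle the two parts separately. Part~(ii) is short and follows almost immediately from Lemma~\ref{lemma.Uasy}, whereas part~(i) is a pathwise dichotomy that constitutes the real work.

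For part~(ii) I would argue by contraposition. The key observation is that $X_h(n)\to 0$ forces $U_h(n)\to 0$: since $U_h(n+1)=X_h(n+1)-X_h^\star(n)$ by \eqref{eq.SSupdate}, Lemma~\ref{lem.Xastcontract} gives $\|U_h(n+1)\|\le \|X_h(n+1)\|+\|X_h^\star(n)\|\le \|X_h(n+1)\|+\|X_h(n)\|$, which tends to $0$ on the event $\{X_h(n)\to 0\}$. Hence $\{\lim_n X_h(n)=0\}\subseteq\{\lim_n U_h(n)=0\}$. If $S_h(\epsilon)=+\infty$ for some $\epsilon>0$, then by monotonicity of $S_h$ in $\epsilon$ and the trichotomy, $S_h$ falls into case~(B) or case~(C) of Lemma~\ref{lemma.Uasy}, and in either case $\limsup_n\|U_h(n)\|>0$ almost surely, so $\mathbb{P}[\lim_n U_h(n)=0]=0$. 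Therefore $\mathbb{P}[\lim_n X_h(n)=0]=0$, which is the contrapositive of~(ii).

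For part~(i) I would work on the almost sure event $\{U_h(n)\to 0\}$ furnished by Lemma~\ref{lemma.Uasy}(A) and then argue pathwise. It is convenient to track $W(n):=\|X_h^\star(n)\|$, because the implicit relation \eqref{eq.SSXast} interacts cleanly with the radial contraction. Writing $Z(n)=X_h^\star(n)$, one has $Z(n)+hf(Z(n))=Z(n-1)+U_h(n)$ from \eqref{eq.SSupdate}--\eqref{eq.SSXast}; taking the inner product with $Z(n)$ and dividing by $W(n)$ (when positive) yields the master estimate
\begin{equation*}
W(n)-W(n-1)\le \|U_h(n)\|-h\,\frac{\langle f(Z(n)),Z(n)\rangle}{W(n)}.
\end{equation*}
This delivers two facts: an unconditional increment bound $W(n)-W(n-1)\le\|U_h(n)\|\to 0$, and, whenever $p\le W(n)\le q$ with $0<p<q$, a definite downward push of size at least $h\psi(p,q)$, where $\psi(p,q):=\inf\{\langle f(y),y\rangle/\|y\|: p\le \|y\|\le q\}>0$ by continuity of $f$, compactness of the shell, and \eqref{eq.fglobalunperturbed}. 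Since $\|X_h(n)\|-W(n-1)\to 0$, the sequences $\|X_h(n)\|$ and $W(n)$ have identical $\liminf$ and $\limsup$, so it suffices to prove $\lim_n W(n)\in\{0,+\infty\}$, i.e.\ to rule out every configuration in which $(\liminf_n W,\limsup_n W)$ is neither $(0,0)$ nor $(+\infty,+\infty)$. I would establish two complementary facts. First, a \emph{no upward crossing} statement: for fixed $0<p<q$, an excursion with $W(a)\le p$, $W(b)\ge q$ and $p<W(k)<q$ for $a<k<b$ (choosing $a$ as the last entry below $p$ and $b$ as the first exit above $q$) cannot occur once $a$ is large, since summing the master estimate along the excursion gives $q-p\le\sum_{k=a+1}^{b}\|U_h(k)\|-(b-1-a)\,h\psi(p,q)$, which is impossible for large $a$ (the perturbation sum is negligible against the linearly growing push, and the short-excursion case $b-a\le 1$ fails because $\|U_h(b)\|<q-p$ eventually). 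Thus $W$ admits only finitely many upward crossings of any fixed annulus, which contradicts any scenario with $\liminf_n W<\limsup_n W$, since then $W$ drops below some $p$ and exceeds some higher $q$ infinitely often. Second, the remaining case $0<\liminf_n W=\limsup_n W<+\infty$ is excluded directly: there $W(n)$ eventually lies in a fixed annulus, so the downward push applies at every step and summing forces $W(n)\to-\infty$, a contradiction. Together these leave $\lim_n W(n)\in\{0,+\infty\}$ almost surely, and hence the same for $\|X_h(n)\|$.

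The main obstacle is the crossing lemma, and more precisely the implicit nature of the scheme: the radial force in the master estimate is evaluated at $Z(n)=X_h^\star(n)$ rather than at $X_h(n)$, so the annulus bounds must be phrased through $W(n)=\|X_h^\star(n)\|$, and one must argue carefully that both the endpoints and the interior of each excursion sit in the correct compact shell before the uniform lower bound $\psi(p,q)$ may be invoked. Care is also needed in selecting the excursion endpoints so that the interior genuinely stays in $(p,q)$, and in transferring the conclusion from $W$ back to $\|X_h(n)\|$ via $\|X_h(n)\|-W(n-1)\to 0$.
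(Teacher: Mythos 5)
Your proposal is correct, and both parts are sound: part (ii) is the same argument as the paper's (the paper argues directly on $\{X_h(n)\to 0\}$, using Lemma~\ref{lem.Xastcontract} to get $X_h^\star(n)\to 0$ and hence $U_h(n)\to 0$ on a set of positive probability, then invokes Lemma~\ref{lemma.Uasy} exactly through the trichotomy you spell out), but your part (i) takes a genuinely different route. The paper tracks $\|X_h(n)\|^2$ and proceeds in two stages: first Lemma~\ref{lemma:Xliminf0} rules out a finite nonzero liminf by trapping the trajectory in the annulus $(3l/4,5l/4)$ and extracting a uniform per-step decrease $h\varphi(l)$, and then a second trapping argument shows that $\liminf_n\|X_h(n)\|=0$ forces $\|X_h(n)\|$ to stay below $l/16$ forever once it dips below, for every $l>0$. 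Because the dissipative push acts at the starred point $X_h^\star(n)$ while the annulus is phrased in terms of $\|X_h(n)\|$, the paper needs the auxiliary function $F_h$ and Lemma~\ref{lemma.Finverse} to convert lower bounds on $\|X_h(n)\|$ into lower bounds on $\|X_h^\star(n)\|$, plus the function $\varphi$ of Lemma~\ref{lemma:barfphi}. You instead track $W(n)=\|X_h^\star(n)\|$ directly: taking the inner product of the combined recursion $Z(n)+hf(Z(n))=Z(n-1)+U_h(n)$ with $Z(n)$ and using Cauchy--Schwarz yields your first-order master estimate, which is valid (the division by $W(n)$ is only performed where $W(n)\geq p>0$, and the case $W(n)=0$ gives the increment bound trivially). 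Your single upcrossing-finiteness lemma then replaces \emph{both} of the paper's trapping arguments at once, establishing the stronger pathwise fact that $W(n)$ converges in $[0,\infty]$ on the a.s.\ event $\{U_h(n)\to 0\}$, after which finite nonzero limits are excluded by the per-step push and the conclusion transfers to $\|X_h(n)\|$ via $\bigl|\,\|X_h(n)\|-W(n-1)\,\bigr|\leq\|U_h(n)\|\to 0$. What your route buys is economy: it dispenses entirely with $F_h$, $F_h^{-1}$ and the $\varphi$ machinery, works in the norm rather than the squared norm, and unifies the case analysis into one crossing argument; your excursion bookkeeping (last entry below $p$, first exit above $q$, push counted only on the $b-1-a$ interior steps, short excursions handled since $\|U_h(b)\|<q-p$ eventually) is airtight. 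What the paper's organization buys is reuse: its squared-norm identities and annulus estimates feed directly into the proofs of Theorem~\ref{thm:Xlim0faway0} and Lemma~\ref{lemma.Xliminffinite}, whereas your master estimate is tailored to this dichotomy.
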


%

Under an additional mean--reverting condition on $f$, we can characterise the conditions on $\sigma_h$ under which solutions of
\eqref{eq.splitstep} tend to zero.
\begin{theorem} \label{thm:Xlim0faway0}
Suppose that $(X_h,X^\star_h)$ is a solution of
\eqref{eq.splitstep}. Suppose that $f$ obeys
\eqref{eq.fglobalunperturbed} and
\begin{equation} \label{eq.xfxunifpos}
\liminf_{y\to\infty} \inf_{\|x\|=y} \langle x,f(x)\rangle =:\phi>0.
\end{equation}
and that the sequence $\xi$ obeys Assumption~\ref{ass.normal}.
\begin{itemize}
\item[(A)] $S_h(\epsilon)$ defined by \eqref{def.Seps} obeys $S_h(\epsilon)<+\infty$ for every
$\epsilon>0$;
\item[(B)] $\lim_{n\to \infty} X_h(n)=0$ a.s. for all $\zeta\in\mathbb{R}^d$;
\item[(C)] $\lim_{n\to \infty} X_h(n)=0$ with positive probability for some $\zeta\in\mathbb{R}^d$;
\end{itemize}
\end{theorem}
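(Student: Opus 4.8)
The plan is to establish the characterisation by proving the cyclic chain of implications (C)~$\Rightarrow$~(A)~$\Rightarrow$~(B)~$\Rightarrow$~(C). Two links are immediate. The implication (C)~$\Rightarrow$~(A) is precisely part~(ii) of Theorem~\ref{thm:Xlim0}, applied to the solution with whichever initial value $\zeta$ witnesses (C). The implication (B)~$\Rightarrow$~(C) is trivial, since almost sure convergence to zero for every $\zeta$ in particular entails convergence to zero with probability one, hence with positive probability, for any single $\zeta$. Thus the whole weight of the proof falls on (A)~$\Rightarrow$~(B).

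To prove (A)~$\Rightarrow$~(B) I would first invoke the dichotomy in part~(i) of Theorem~\ref{thm:Xlim0}: under (A), for each fixed $\zeta$ the event $\{\lim_{n\to\infty}\|X_h(n)\|=0\}\cup\{\lim_{n\to\infty}\|X_h(n)\|=+\infty\}$ has probability one. Convergence to zero therefore follows once the blow--up event $\{\lim_{n\to\infty}\|X_h(n)\|=+\infty\}$ is shown to be null, equivalently that $\liminf_{n\to\infty}\|X_h(n)\|<+\infty$ almost surely. The engine is the energy identity obtained by squaring \eqref{eq.SSupdate} and substituting $\|X_h^\star(n)\|^2=\|X_h(n)\|^2-2h\langle X_h^\star(n),f(X_h^\star(n))\rangle-h^2\|f(X_h^\star(n))\|^2$, which comes from \eqref{eq.SSXast}. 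Writing $V(n)=\|X_h(n)\|^2$ and $U_h(n+1)=\sqrt{h}\sigma_h(n)\xi(n+1)$, this gives $V(n+1)\le V(n)-2h\langle X_h^\star(n),f(X_h^\star(n))\rangle+2\langle X_h^\star(n),U_h(n+1)\rangle+\|U_h(n+1)\|^2$. Condition \eqref{eq.xfxunifpos} supplies the decisive drift: there is $R_0$ with $\langle x,f(x)\rangle\ge\phi/2$ whenever $\|x\|\ge R_0$, and since continuity of $f$ together with \eqref{eq.SSXast} forces $\|X_h(n)\|\ge R_1:=R_0+h\sup_{\|y\|\le R_0}\|f(y)\|$ to imply $\|X_h^\star(n)\|\ge R_0$, the drift term is at most $-h\phi$ as soon as $\|X_h(n)\|\ge R_1$.

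The main obstacle is the martingale cross--term $2\langle X_h^\star(n),U_h(n+1)\rangle$: because $\|X_h^\star(n)\|$ is exactly the quantity we fear is diverging, this term has conditional variance proportional to $\|\sigma_h(n)\|_F^2\,V(n)$ and so cannot be summed pathwise to control $V$. I would circumvent this by passing to conditional expectations rather than summing. Since $\mathbb{E}[U_h(n+1)\mid\mathcal{F}(n)]=0$ the cross--term vanishes in conditional mean, while $\mathbb{E}[\|U_h(n+1)\|^2\mid\mathcal{F}(n)]=h\|\sigma_h(n)\|_F^2\to 0$ because (A) forces $\|\sigma_h(n)\|_F\to 0$. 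Fixing $m$ large enough that $h\|\sigma_h(n)\|_F^2\le h\phi/2$ for all $n\ge m$, and introducing the stopping time $\tau_m=\inf\{n\ge m:\|X_h(n)\|<R_1\}$, the estimate above yields $\mathbb{E}[V(n+1)\mid\mathcal{F}(n)]\le V(n)-h\phi/2$ on $\{n<\tau_m\}$. Hence the stopped, compensated process $V(n\wedge\tau_m)+\tfrac{h\phi}{2}(n\wedge\tau_m)$ is a nonnegative supermartingale, which by the martingale convergence theorem converges to a finite limit almost surely; this is impossible on $\{\tau_m=+\infty\}$, where the compensator alone diverges. Therefore $\tau_m<+\infty$ almost surely for every such $m$, so $\|X_h(n)\|<R_1$ infinitely often, giving $\liminf_{n\to\infty}\|X_h(n)\|\le R_1$ and nullity of the blow--up event. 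Combined with the dichotomy, this forces $\lim_{n\to\infty}X_h(n)=0$ almost surely for every $\zeta$, completing (A)~$\Rightarrow$~(B) and hence the whole equivalence.
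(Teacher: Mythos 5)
Your proposal is correct, and its overall skeleton coincides with the paper's: both reduce the theorem to the cycle (C)~$\Rightarrow$~(A) (which is exactly part~(ii) of Theorem~\ref{thm:Xlim0}), (B)~$\Rightarrow$~(C) (trivial), and (A)~$\Rightarrow$~(B) via the dichotomy of Theorem~\ref{thm:Xlim0}(i), so that everything hinges on showing $\liminf_{n\to\infty}\|X_h(n)\|<+\infty$ a.s.\ under \eqref{eq.xfxunifpos} --- this is precisely the paper's Lemma~\ref{lemma.Xliminffinite}. Where you genuinely diverge is in the proof of that finiteness. The paper works pathwise: from the representation of $\|X_h(n)\|^2$ in Lemma~\ref{lemma.repX2} it extracts, on the putative blow--up event, a linear negative drift $-\tfrac{3\phi h}{4}(n-N_3(\omega))$ plus the martingale $M$, and then invokes the Bramson--Quastel--Rosenthal dichotomy (Lemmas~\ref{lemma.bramsquesrosen} and~\ref{lemma.recurrentmartingale}, whose hypothesis is verified using the Gaussian structure of $\xi$ with $K=2\pi$): since $M$ either converges or has $\liminf_{n\to\infty}M(n)=-\infty$, the liminf of the right--hand side is $-\infty$ in both cases, contradicting $\liminf\|X_h(n)\|^2=+\infty$. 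You instead run a classical Foster--Lyapunov argument: a deterministic radius $R_1$ (your bound $\|X_h(n)\|\geq R_1\Rightarrow\|X_h^\star(n)\|\geq R_0$ is a direct analogue of the paper's Lemma~\ref{lemma.Finverse}), the fact that (A) forces $\|\sigma_h(n)\|_F\to0$, a stopping time $\tau_m$, and convergence of the nonnegative supermartingale $V(n\wedge\tau_m)+\tfrac{h\phi}{2}(n\wedge\tau_m)$. Your route is more elementary and self--contained --- it dispenses with the BQR lemma entirely and uses only $\mathbb{E}[U_h(n+1)\mid\mathcal{F}(n)]=0$ and the smallness of the conditional second moment, so it would survive with non--Gaussian noise; it also yields the quantitative bound $\liminf_{n\to\infty}\|X_h(n)\|\leq R_1$ a.s. The price is that conditional expectations and the supermartingale property require $\mathbb{E}\|X_h(n)\|^2<\infty$ for each $n$ and $\mathcal{F}(n)$--measurability of $X_h^\star(n)$, points you leave tacit; both are available here --- measurability is part of the paper's standing assumption on solutions, and integrability follows by the same induction as in the proof of Theorem~\ref{thm.sig2l1Xto0}, since the drift term $-2h\langle f(X_h^\star(n)),X_h^\star(n)\rangle-h^2\|f(X_h^\star(n))\|^2$ is nonpositive and $\zeta$ is deterministic --- but they should be stated to make the supermartingale argument airtight.
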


Furthermore, in the scalar case, we can characterise the stability of the equilibrium without requiring to assume \eqref{eq.xfxunifpos}. In fact, it suffices to just assume that $f$ obeys \eqref{eq.fglobalunperturbed}.
\begin{theorem}\label{theorem.xto0}
Suppose that $(X_h,X^{\star}_h)$ is a solution of \eqref{eq.splitstep}. Suppose that $f$ obeys \eqref{eq.fglobalunperturbed} and $S_h(\epsilon)=\sum^{\infty}_{n=0}\left\{1-\Phi(\epsilon/|\sigma_n(n)|)\right\}<+\infty$ for all $\epsilon>0$. Then
\[
\lim_{n\to \infty}X_h(n,\zeta)=0  \quad \text{a.s. for all $\zeta\in \mathbb{R}$}. 
\]
\end{theorem}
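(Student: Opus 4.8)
The plan is to reduce everything to the dichotomy already proved in Theorem~\ref{thm:Xlim0}(i). Since $f$ obeys \eqref{eq.fglobalunperturbed} and $S_h(\epsilon)<+\infty$ for every $\epsilon>0$, that result gives, on an almost sure event, either $\lim_{n\to\infty}|X_h(n)|=0$ or $\lim_{n\to\infty}|X_h(n)|=+\infty$. Hence it suffices to prove that the explosion event $E:=\{\lim_{n\to\infty}|X_h(n)|=+\infty\}$ is null. Throughout I would work on the almost sure event on which $U_h(n)\to 0$; this holds by Lemma~\ref{lemma.Uasy}(A) (with $\|\sigma_h(n)\|_F=|\sigma_h(n)|$ in the scalar case), and $U_h(n+1)=\sqrt{h}\,\sigma_h(n)\xi(n+1)$ is precisely the increment appearing in \eqref{eq.SSupdate}.

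The first step is to show that on $E$ the sign of $X_h(n)$ is eventually constant, so that (up to a null set) $E=E_+\cup E_-$ with $E_\pm=\{X_h(n)\to\pm\infty\}$. In the scalar case the existence argument of Remark~\ref{rem.scalarexist} shows that $X_h^\star(n)$ lies between $0$ and $X_h(n)$, whence $\sgn(X_h^\star(n))=\sgn(X_h(n))$, while Lemma~\ref{lem.Xastcontract} gives $|X_h^\star(n)|<|X_h(n)|$. From \eqref{eq.SSXast} one has $|X_h(n)|\le |X_h^\star(n)|+h|f(X_h^\star(n))|$, so the continuity of $f$ forces $|X_h^\star(n)|\to\infty$ on $E$ (a bounded subsequence of $X_h^\star$ would keep $X_h$ bounded along it). Since $X_h(n+1)=X_h^\star(n)+U_h(n+1)$ with $|U_h(n+1)|\to 0$, once $|X_h^\star(n)|>|U_h(n+1)|$ we get $\sgn(X_h(n+1))=\sgn(X_h^\star(n))=\sgn(X_h(n))$, so the sign stabilises and $E_+,E_-$ exhaust $E$.

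It then remains to show $\mathbb{P}(E_+)=0$, the case $E_-$ being symmetric. I would split according to whether $\sigma_h\in\ell^2(\mathbb{N},\mathbb{R})$. If $\sum_n\sigma_h(n)^2<+\infty$, then Theorem~\ref{thm.sig2l1Xto0} already yields $X_h(n)\to 0$ a.s., contradicting $\mathbb{P}(E_+)>0$. The substantive case is $\sum_n\sigma_h(n)^2=+\infty$. Telescoping \eqref{eq.SSXast}--\eqref{eq.SSupdate} gives
\[
X_h(n)=\zeta-h\sum_{j=0}^{n-1}f(X_h^\star(j))+M_n, \qquad M_n:=\sum_{j=0}^{n-1}U_h(j+1),
\]
where $(M_n)$ is a sum of independent centred Gaussians of variances $h\sigma_h(j)^2$, so $\Var(M_n)=h\sum_{j<n}\sigma_h(j)^2\to\infty$. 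On $E_+$ the quantities $f(X_h^\star(j))$ are positive for all large $j$ (there $X_h^\star(j)>0$), so the drift sum is bounded below, which forces $M_n=X_h(n)-\zeta+h\sum_{j<n}f(X_h^\star(j))\to+\infty$ on $E_+$. This is impossible up to a null set: a Gaussian walk whose variance diverges satisfies $\liminf_{n\to\infty}M_n=-\infty$ a.s. (the event $\{\liminf M_n=-\infty\}$ is a tail event, and $\mathbb{P}(M_n\le -K)=\Phi(-K/\sqrt{\Var(M_n)})\to\tfrac12$ for each fixed $K$, so by the Kolmogorov zero--one law the event has probability $1$). Hence $\mathbb{P}(E_+)=0$, and with $\mathbb{P}(E_-)=0$ we conclude $\mathbb{P}(E)=0$ and therefore $X_h(n)\to 0$ a.s.

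The main obstacle is exactly this final case $\sum_n\sigma_h(n)^2=+\infty$ (compatible with $S_h(\epsilon)<+\infty$ for all $\epsilon$): here the noise partial sums $M_n$ do not converge, so explosion cannot be excluded by summing increments alone, and one must combine the oscillation of $M_n$ with the fact that the deterministic pullback $-hf(X_h^\star(j))$ has a definite sign along $E_+$. By contrast, the sign-stabilisation step and the reduction through Theorem~\ref{thm:Xlim0}(i) are comparatively routine, the former resting entirely on the scalar mean-reversion recorded in Lemma~\ref{lem.Xastcontract} and Remark~\ref{rem.scalarexist}.
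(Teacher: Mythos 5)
Your proof is correct, and it shares the paper's main ingredients --- the dichotomy of Theorem~\ref{thm:Xlim0}(i), the split according to whether $\sigma_h\in\ell^2(\mathbb{N})$ (with Theorem~\ref{thm.sig2l1Xto0} disposing of the summable case), and the a.s.\ oscillation $\liminf_{n\to\infty} M_n=-\infty$, $\limsup_{n\to\infty} M_n=+\infty$ of the Gaussian walk when $\sigma_h\notin\ell^2$ --- but it dispatches the explosion event by a genuinely different decomposition. The paper first proves a standalone result (Lemma~\ref{lemma.liminf0discrete}): if $\sigma_h\notin\ell^2$ then $\liminf_{n\to\infty}X_h(n)\le 0\le\limsup_{n\to\infty}X_h(n)$ a.s.\ --- established by exactly your telescoping-plus-one-signed-drift computation, but run under the weaker hypothesis $\liminf X_h>0$ rather than $X_h\to+\infty$ --- so that on the explosion event the solution would have to oscillate between $-\infty$ and $+\infty$; it then kills this scenario with a single-step crossing argument: at a time $N^\ast$ with $X_h(N^\ast)<-1/\epsilon$ and $X_h(N^\ast+1)>1/\epsilon$, one has $X_h(N^\ast+1)=X_h^\star(N^\ast)+U_h(N^\ast+1)\le 0+\epsilon$ because $X_h^\star(N^\ast)$ shares the sign of $X_h(N^\ast)$, giving $\epsilon^2\ge 1$, a contradiction. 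You instead show up front that the sign of $X_h$ stabilises on the explosion event (via $|X_h^\star(n)|\to\infty$ and $U_h(n)\to 0$), so the oscillating-explosion scenario never arises, and then apply the martingale oscillation directly on $E_\pm$. The two routes package the same underlying fact --- the update $X_h(n+1)=X_h^\star(n)+U_h(n+1)$ with $X_h^\star$ sign-preserving and contracting cannot carry the solution far across zero --- once at a crossing time (paper) versus once-and-for-all as sign constancy (you). Your route buys a cleaner logical structure, and your tail-event/Kolmogorov zero--one justification of $\liminf M_n=-\infty$ (note the implicit reverse-Fatou step giving $\mathbb{P}[\liminf M_n\le -K]\ge 1/2$ before the zero--one law upgrades it to $1$) makes self-contained a fact the paper asserts without proof; the paper's route buys the stronger intermediate statement of Lemma~\ref{lemma.liminf0discrete}, valid for all solutions without assuming explosion. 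One citation to tighten: Remark~\ref{rem.scalarexist} constructs \emph{one} root of \eqref{eq.splitimplicit} between $0$ and $x$, whereas you (like the paper, tacitly, when it writes $X_h(N^\ast)\le X_h^\star(N^\ast)\le 0$) need that \emph{every} root satisfies $\sgn(x^\star)=\sgn(x)$, since uniqueness of $X_h^\star$ is not assumed; this is immediate --- if $x>0$ and $x^\star\le 0$, then $f(x^\star)\le 0$ by \eqref{eq.fglobalunperturbed}, so $x^\star=x-hf(x^\star)\ge x>0$, a contradiction --- but it should be stated.
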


The next result enables us to completely classify the asymptotic behaviour of the solutions of \eqref{eq.splitstep}.
In order to do so, we must strengthen once again the mean--reverting hypothesis on $f$.

%

\begin{theorem} \label{lemma.Xboundedabove}
Suppose that $(X_h,X^\star_h)$ is a solution of \eqref{eq.splitstep}.
Suppose that $f$ obeys \eqref{eq.fglobalunperturbed} and
\begin{equation} \label{eq.fboundedbelow}
\liminf_{y\to\infty} \inf_{\|x\|=y} \frac{\langle x,f(x)\rangle}{\|x\|}=+\infty,
\end{equation}
and that the sequence $\xi$ obeys Assumption~\ref{ass.normal}. Let $S_h(\epsilon)$ be defined by \eqref{def.Seps}.
\begin{itemize}
\item[(A)]  If $S_h(\epsilon)<+\infty$ for all $\epsilon>0$, then $\lim_{n\to\infty} X_h(n)=0$ a.s.
\item[(B)] If $S_h(\epsilon)<+\infty$ for all $\epsilon>\epsilon'$ and $S_h(\epsilon)=+\infty$ for all $\epsilon<\epsilon'$, then
there exists deterministic $0<c_3\leq c_4<+\infty$ such that
\[
c_3<\limsup_{n\to\infty} \|X_h(n)\|\leq c_4, \quad \text{a.s.}
\]
and 
\[
\liminf_{n\to\infty} \|X_h(n)\|=0, \quad\text{a.s.}
\]
\item[(C)]  If $S_h(\epsilon)=+\infty$ for all $\epsilon>0$, then $\limsup_{n\to\infty} \|X_h(n)\|=+\infty$ a.s.
\end{itemize}
\end{theorem}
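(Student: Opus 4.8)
The plan is to treat the three cases by assembling the results already established, concentrating the genuinely new work in the upper bound and the recurrence statement of part~(B). Cases (A) and (C) should be essentially immediate. First I would observe that the strengthened hypothesis \eqref{eq.fboundedbelow} implies the weaker condition \eqref{eq.xfxunifpos}: if $\inf_{\|x\|=y}\langle x,f(x)\rangle/\|x\|\to+\infty$ as $y\to\infty$, then a fortiori $\inf_{\|x\|=y}\langle x,f(x)\rangle\to+\infty$, so the $\liminf$ in \eqref{eq.xfxunifpos} is positive. Hence part~(A) follows at once from the equivalence of conditions (A) and (B) in Theorem~\ref{thm:Xlim0faway0}. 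Part~(C) is nothing more than part~(A) of Theorem~\ref{theorem.XunboundedXboundedbelow}, whose hypothesis is precisely $S_h(\epsilon)=+\infty$ for every $\epsilon>0$.

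For part~(B), the lower bound on the $\limsup$ is inherited from part~(B) of Theorem~\ref{theorem.XunboundedXboundedbelow}, which gives $\limsup_n\|X_h(n)\|\geq c_1/2$ with $c_1$ the constant of \eqref{eq.Ubounds}; I would therefore fix any deterministic $c_3\in(0,c_1/2)$. The upper bound is where \eqref{eq.fboundedbelow} does its work. Via Lemma~\ref{lemma.Uasy}(C) one has $\limsup_n\|U_h(n)\|\leq c_2$, so that the increments $\sqrt{h}\sigma_h(n)\xi(n+1)=U_h(n+1)$ satisfy $\|\sqrt{h}\sigma_h(n)\xi(n+1)\|\leq B:=c_2+1$ for all $n\geq N(\omega)$. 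Taking the inner product of \eqref{eq.SSXast} with $X_h^\star(n)$ and applying Cauchy--Schwarz yields the pointwise contraction estimate $\|X_h^\star(n)\|+h\langle f(X_h^\star(n)),X_h^\star(n)\rangle/\|X_h^\star(n)\|\leq\|X_h(n)\|$, so by \eqref{eq.fboundedbelow} the deterministic step reduces the norm by an amount tending to infinity as $\|X_h^\star(n)\|$ grows. From this I would construct a deterministic threshold $c_4$ so large that the one-step bound $u\mapsto\gamma(u)+B$, where $\gamma(u)$ controls $\|X_h^\star(n)\|$ in terms of $\|X_h(n)\|=u$, lies strictly below the diagonal for $u>c_4$ and maps $[0,c_4]$ into itself. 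Then $\|X_h(n+1)\|\leq\|X_h^\star(n)\|+\|U_h(n+1)\|\leq\gamma(\|X_h(n)\|)+B$ forces $\|X_h(n)\|$ to decrease by a fixed amount whenever it exceeds $c_4$ (for $n\geq N$) and to remain below $c_4$ thereafter, giving $\limsup_n\|X_h(n)\|\leq c_4$ a.s.

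For the recurrence $\liminf_n\|X_h(n)\|=0$ I would exploit that $S_h(\epsilon)<+\infty$ for $\epsilon>\epsilon'$ forces $\epsilon/\|\sigma_h(n)\|_F\to\infty$, hence $\|\sigma_h(n)\|_F\to 0$. Arguing by contradiction, suppose $P(\liminf_n\|X_h(n)\|\geq L)>0$ for some $L>0$; intersecting with the a.s.\ event $\{\limsup_n\|X_h(n)\|\leq c_4\}$, the path eventually lies in the annulus $\{L\leq\|x\|\leq c_4+1\}$. There, continuity and \eqref{eq.fglobalunperturbed} give a uniform contraction $\|X_h^\star(n)\|^2\leq\|X_h(n)\|^2-\kappa$ with $\kappa>0$ deterministic, while the noise contribution has conditional mean $h\|\sigma_h(n)\|_F^2\to 0$; thus for large $n$, $E[\|X_h(n+1)\|^2\mid\mathcal{F}(n)]\leq\|X_h(n)\|^2-\kappa/2$ on the annulus, which upon iteration would drive $E[\|X_h(n)\|^2]$ below zero, a contradiction. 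A localisation by stopping times makes this rigorous and yields $\liminf_n\|X_h(n)\|=0$ a.s.

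The main obstacle is part~(B), and within it the two places where \eqref{eq.fboundedbelow} is indispensable. For the upper bound one must convert the qualitative superlinear growth of the radial drift into a genuine deterministic absorbing level $c_4$ that dominates the merely bounded, non-vanishing noise increments; the weaker condition \eqref{eq.xfxunifpos} of Theorem~\ref{thm:Xlim0faway0} does not suffice here. For the recurrence to zero the delicate point is the measurability bookkeeping of the localisation, ensuring the negative-drift supermartingale estimate can be applied on the random interval during which the trajectory is confined to the annulus.
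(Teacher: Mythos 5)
Your proposal is correct, and for most of the theorem it follows the paper's own route: part (A) via the implication \eqref{eq.fboundedbelow} $\Rightarrow$ \eqref{eq.xfxunifpos} and Theorem~\ref{thm:Xlim0faway0}, part (C) and the lower bound in (B) via Theorem~\ref{theorem.XunboundedXboundedbelow}, and the upper bound in (B) via exactly the paper's mechanism --- bounded noise increments from Lemma~\ref{lemma.Uasy}(C) plus the Cauchy--Schwarz estimate $\|X_h(n)\|\geq \|X_h^\star(n)\|+h\langle f(X_h^\star(n)),X_h^\star(n)\rangle/\|X_h^\star(n)\|$, converted by \eqref{eq.fboundedbelow} into an absorbing level. (The paper makes your ``one-step map below the diagonal'' precise through the threshold $M(B_0)$ and $C(B_0)=B_0+M(B_0)$; note that invariance of $[0,c_4]$ does not follow from $\gamma(u)+B\leq u+B$ alone, but needs the dichotomy on $\|X_h^\star(n)\|$ --- either $\|X_h^\star(n)\|\geq M(B_0)$, whence $\|X_h^\star(n)\|\leq\|X_h(n)\|-2B_0$, or $\|X_h^\star(n)\|<M(B_0)$, whence $\|X_h(n+1)\|\leq M(B_0)+B_0$ --- which is the paper's induction step and is implicit in your $\gamma$.)

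The genuinely different route is your proof of $\liminf_{n\to\infty}\|X_h(n)\|=0$. The paper (Lemma~\ref{lemma.liminfXdisc}) argues through time averages: dividing the representation \eqref{eq.X2sum} by $n$, using the fourth-moment strong law of Lemma~\ref{lemma.sigmahxiLLN} for the independent noise terms and the martingale law of large numbers $M(n)/\langle M\rangle(n)\to 0$, it deduces $\frac{1}{n}\sum_{i=1}^n hR(X_h^\star(i-1))\to 0$ a.s.\ with $R$ from \eqref{def.R}, and then contradicts confinement in an annulus where $R$ is bounded below. You instead run a conditional negative-drift argument: on the annulus, $\mathbb{E}[\|X_h(n+1)\|^2\,|\,\mathcal{F}(n)]=\|X_h^\star(n)\|^2+h\|\sigma_h(n)\|_F^2\leq \|X_h(n)\|^2-h\kappa/2$ for all $n$ beyond a \emph{deterministic} threshold (since $\sigma_h$ is deterministic and $\|\sigma_h(n)\|_F\to 0$ by \eqref{eq.sigmanhto0}), and optional stopping of the supermartingale $\|X_h(n\wedge\tau_N)\|^2+(h\kappa/2)(n\wedge\tau_N)$ gives $\mathbb{E}[\tau_N]<\infty$, so the path a.s.\ exits the annulus after every deterministic epoch $N$; intersecting the confinement event with $\{\tau_N=\infty\}$ then kills it. This is valid once you supply three details: the uniform constant $\kappa$ requires a positive \emph{lower} bound on $\|X_h^\star(n)\|$, which comes from Lemma~\ref{lemma.Finverse} ($\|X_h(n)\|\geq L$ forces $\|X_h^\star(n)\|\geq F_h^{-1}(L)>0$), not from continuity of $f$ alone; the optional-stopping step needs $\mathbb{E}[\|X_h(n)\|^2]<\infty$ for each $n$, which the induction in the proof of Theorem~\ref{thm.sig2l1Xto0} provides under \eqref{eq.fglobalunperturbed} without any summability of $\sigma_h$; and the exit ``above'' is harmless precisely because you intersect with the a.s.\ event $\{\limsup_n\|X_h(n)\|\leq c_4\}$ before localising. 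In exchange for this bookkeeping, your argument dispenses entirely with the SLLN machinery (Lemma~\ref{lemma.sigmahxiLLN} and its eighth-moment bound on $\|\xi(n)\|$), making it more elementary; the paper's Cesàro route, on the other hand, delivers the stronger quantitative statement $\frac{1}{n}\sum_i hR(X_h^\star(i-1))\to 0$, the analogue of which it reuses to prove the time-average conclusion $\frac{1}{n}\sum_{j}\|Y_h(j)\|^2\to 0$ in the linear case (Theorem~\ref{thm.linsplit}(B)) --- a by-product your method does not yield.
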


This necessary and sufficient condition on $S_h(\epsilon)$ is difficult to evaluate directly, because we do not know $\Phi$ in its closed form. However we can show that $S_h(\epsilon)$ is finite or infinite according as to whether the sum 
\begin{equation} \label{def.sh'epsilon}
S_h'(\epsilon)=\sum_{n=0}^{\infty}\|\sigma_h(n)\|_F\exp\left(-\frac{\epsilon^2}{2}\frac{1}{\|\sigma_h(n)\|^2_F}\right)
\end{equation}
is finite or infinite, we interpret the summand to be zero when $\|\sigma_h(n)\|_F=0$. Therefore we establish the following Lemmata 
which enables us to obtain all the above results with $S_h'(\epsilon)$ in place of $S_h(\epsilon)$.
\begin{lemma}\label{lemma.sepss'epsdisc}
$S_h(\epsilon)$ given by \eqref{def.Seps} is finite if and only if
$S_h'(\epsilon)$ given by \eqref{def.sh'epsilon} is finite.
\end{lemma}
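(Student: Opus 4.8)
The plan is to reduce the whole statement to the classical two--sided estimate for the Gaussian tail (the Mills ratio bounds): for every $x>0$ one has
\[
\frac{x}{1+x^2}\cdot\frac{1}{\sqrt{2\pi}}e^{-x^2/2}\leq 1-\Phi(x)\leq \frac{1}{x}\cdot\frac{1}{\sqrt{2\pi}}e^{-x^2/2}.
\]
Write $a_n=\|\sigma_h(n)\|_F$. Since both summands are defined to be zero whenever $a_n=0$, those indices contribute nothing to either sum, so I restrict attention to indices with $a_n>0$ and set $x_n=\epsilon/a_n>0$. A direct computation (using $\epsilon^2/(2a_n^2)=x_n^2/2$ and $a_n=\epsilon/x_n$) shows that the $n$--th summand of $S_h'(\epsilon)$ equals $\frac{\epsilon}{x_n}e^{-x_n^2/2}$, while the $n$--th summand of $S_h(\epsilon)$ is $1-\Phi(x_n)$. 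Everything then comes down to comparing $1-\Phi(x_n)$ with $\frac{\epsilon}{x_n}e^{-x_n^2/2}$.

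For the implication $S_h'(\epsilon)<+\infty\Rightarrow S_h(\epsilon)<+\infty$ I would use only the upper Mills bound: for each $n$ with $a_n>0$,
\[
1-\Phi(x_n)\leq \frac{1}{x_n}\cdot\frac{1}{\sqrt{2\pi}}e^{-x_n^2/2}=\frac{1}{\epsilon\sqrt{2\pi}}\cdot\frac{\epsilon}{x_n}e^{-x_n^2/2}.
\]
This is a term--by--term domination of the summand of $S_h(\epsilon)$ by a fixed multiple of the summand of $S_h'(\epsilon)$, valid for all $n$ (trivially when $a_n=0$), so summing gives $S_h(\epsilon)\leq \frac{1}{\epsilon\sqrt{2\pi}}\,S_h'(\epsilon)$ and the conclusion is immediate. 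Note that this direction needs no information on the behaviour of $a_n$.

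For the converse $S_h(\epsilon)<+\infty\Rightarrow S_h'(\epsilon)<+\infty$, I would first extract from convergence that the summands tend to zero, i.e. $1-\Phi(x_n)\to 0$; since $\Phi$ is continuous and increasing to $1$, this is equivalent to $x_n\to+\infty$, i.e. $a_n\to 0$. Hence there is $N$ with $x_n\geq 1$ for all $n\geq N$, and for such $n$ the lower Mills bound gives
\[
1-\Phi(x_n)\geq \frac{x_n}{1+x_n^2}\cdot\frac{1}{\sqrt{2\pi}}e^{-x_n^2/2}=\frac{x_n^2}{(1+x_n^2)\,\epsilon\sqrt{2\pi}}\cdot\frac{\epsilon}{x_n}e^{-x_n^2/2}\geq \frac{1}{2\epsilon\sqrt{2\pi}}\cdot\frac{\epsilon}{x_n}e^{-x_n^2/2},
\]
where I used $x_n^2/(1+x_n^2)\geq 1/2$ for $x_n\geq 1$. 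Thus each tail summand of $S_h'(\epsilon)$ is at most $2\epsilon\sqrt{2\pi}$ times the corresponding summand of $S_h(\epsilon)$, so $\sum_{n\geq N}$ of the $S_h'$--summands is finite, and the finitely many earlier terms are harmless, giving $S_h'(\epsilon)<+\infty$.

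The only genuine content is the two--sided Gaussian tail estimate; the main (minor) obstacle is the bookkeeping — discarding the indices with $a_n=0$, and justifying that convergence of $S_h(\epsilon)$ forces $a_n\to 0$, which is exactly what licenses replacing the factor $x_n^2/(1+x_n^2)$ by the lower bound $1/2$ for all large $n$. Everything else is a purely term--by--term comparison.
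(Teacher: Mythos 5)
Your proof is correct, and it takes a genuinely different (and in one direction simpler) route than the paper. The paper's proof rests on the asymptotic Mills ratio, $\lim_{x\to\infty}(1-\Phi(x))/(x^{-1}e^{-x^2/2})=1/\sqrt{2\pi}$, and therefore must in \emph{both} directions first establish that $\epsilon/\|\sigma_h(n)\|_F\to\infty$ before a limit--comparison of summands can be invoked; in the direction $S_h'(\epsilon)<+\infty\Rightarrow S_h(\epsilon)<+\infty$ this forces the paper to introduce the auxiliary function $\phi(x)=x e^{-1/(2x^2)}$ and argue via its continuity and monotonicity that summability of the $S_h'$--summands yields $\|\sigma_h(n)\|_F\to 0$. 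You instead use the non--asymptotic two--sided Gaussian tail bounds $\frac{x}{1+x^2}\,\frac{1}{\sqrt{2\pi}}e^{-x^2/2}\leq 1-\Phi(x)\leq \frac{1}{x}\,\frac{1}{\sqrt{2\pi}}e^{-x^2/2}$, valid for all $x>0$, which makes the direction $S_h'\Rightarrow S_h$ a pure term--by--term domination, $S_h(\epsilon)\leq \frac{1}{\epsilon\sqrt{2\pi}}\,S_h'(\epsilon)$, with no asymptotic information on $\|\sigma_h(n)\|_F$ needed at all --- this entirely bypasses the paper's $\phi$--argument. Your converse direction is structurally parallel to the paper's corresponding step (deduce $x_n\to\infty$ from decay of the summands, then compare), but again quantitative, using $x_n^2/(1+x_n^2)\geq 1/2$ for $x_n\geq 1$ to get an explicit constant on the tail; the handling of indices with $\|\sigma_h(n)\|_F=0$ and of the finitely many initial terms is correctly accounted for. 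What each approach buys: yours yields explicit two--sided comparability constants (hence a quantitative version of the lemma) from a slightly stronger but still classical input, while the paper gets by with the single cited limit from \cite[Problem 2.9.22]{K&S} --- the same fact it reuses in Lemma~\ref{lemma.sigmalog} --- at the cost of the more roundabout monotonicity argument in one direction.
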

\begin{proof}
We note by e.g., \cite[Problem 2.9.22]{K&S}, we have 
\begin{equation} \label{eq.millsasy}
\lim_{x\to\infty}\frac{1-\Phi(x)}{x^{-1}e^{-x^2/2}}=\frac{1}{\sqrt{2\pi}}.
\end{equation}
If $S_h(\epsilon)$ is finite, then $1-\Phi(\epsilon/\|\sigma_h(n)\|_F)\to 0$
as $n\to\infty$. This implies $\epsilon/\|\sigma_h(n)\|_F\to\infty$ as
$n\to\infty$. Therefore by \eqref{eq.millsasy}, we have
\begin{equation} \label{eq.millsthetanmult}
\lim_{n\to\infty}\frac{1-\Phi(\epsilon/\|\sigma_h(n)\|_F)}{\|\sigma_h(n)\|_F/\epsilon\cdot
\exp(-\epsilon^2/\{2\|\sigma_h(n)\|_F^2\})}=\frac{1}{\sqrt{2\pi}}.
\end{equation}
Since $\left(1-\Phi(\epsilon/\|\sigma_h(n)\|_F)\right)_{n\geq 1}$ is summable, it
therefore follows that the sequence
\[\left(\|\sigma_h(n)\|_F/\epsilon\cdot
\exp(-\epsilon^2/\{2\|\sigma_h(n)\|_F^2(n)\})\right)_{n\geq 1}
\] is summable, so $S_h'(\epsilon)$ is finite, by definition.

On the other hand, if $S'(\epsilon)$ is finite, and we define
$\phi:[0,\infty)\to \mathbb{R}^d$ by
\[
\phi(x)=\left\{ \begin{array}{cc} x \exp(-1/(2x^2)), & x>0, \\
0, & x=0,
\end{array}
\right.
\]
then we have $\|\sigma_h(n)\|_F\exp(-\epsilon^2/2\|\sigma_h(n)\|_F^2(n))$ is summable, and hence the sequence $(\phi(\|\sigma_h(n)\|_F/\epsilon))_{n\geq 1}$ is summable.
Therefore $\phi(\|\sigma_h(n)\|_F/\epsilon)\to 0$ as $n\to\infty$. Then, as
$\phi$ is continuous and increasing on $[0,\infty)$, we have that
$\|\sigma_h(n)\|_F/\epsilon\to 0$ as $n\to\infty$, or $\epsilon/\|\sigma_h(n)\|_F\to
\infty$ as $n\to\infty$. Therefore \eqref{eq.millsthetanmult} holds, and
thus $(1-\Phi(\epsilon/\|\sigma_h(n)\|_F))_{n\geq 1}$ is summable, which
implies that $S_h(\epsilon)$ is finite, as required.
\end{proof}

\subsection{Connection with continuous results}
To see how these results mimic the asymptotic behaviour of \eqref{eq.sdefinite} and \eqref{eq.linearSDE}, we record
corresponding result for solutions of these equations. To this end, we define
\begin{equation}\label{def.Sepscns}
S(\epsilon)=\sum_{n=0}^\infty \left\{1-\Phi\left(\frac{\epsilon}{\sqrt{\int_n^{n+1} \|\Sigma(t)\|^2_F\,dt}}\right)\right\}
\end{equation}
and for $h>0$
\begin{equation}\label{def.Sepshcns}
S_h^{(c)}(\epsilon)=\sum_{n=0}^\infty
\left\{1-\Phi\left(\frac{\epsilon}{\sqrt{\frac{1}{h}\int_{nh}^{(n+1)h} \|\Sigma(t)\|^2_F\,dt}}\right)\right\}.
\end{equation}
Perusal of results in ~\cite{JAJCAR:2012} show that  
$S(\cdot)$ above can be replaced by $S_h^{c}$. The result therefore is
\begin{theorem}  \label{thm.linsde}
Suppose that $A\in \mathbb{R}^{d\times d}$ obeys \eqref{eq.linstab}. Let $h>0$ and suppose that
$S^{(c)}_h(\epsilon)$ be defined by \eqref{def.Sepshcns}. Let $Y$ be the unique solution of \eqref{eq.linearSDE}.
\begin{itemize}
\item[(A)] If $S_h^{(c)}(\epsilon)<+\infty$ for every $\epsilon>0$, then $Y(t)\to 0$ as $t\to\infty$ a.s.
\item[(B)] If there exists $\epsilon'>0$ such that $S_h^{(c)}(\epsilon)<+\infty$ for all $\epsilon>\epsilon'$
and $S_h^{(c)}(\epsilon)=+\infty$ for all $\epsilon<\epsilon'$, then there exist deterministic $0<c_1\leq c_2<+\infty$ such that
\[
c_1\leq \limsup_{t\to\infty} \|Y(t)\|\leq c_2, \quad \text{a.s.}
\]
and 
\[
\liminf_{t\to\infty} \|Y(t)\|=0, \quad \lim_{t\to\infty} \frac{1}{t}\int_0^t \|Y(s)\|^2\,ds=0, \quad\text{a.s.}
\]
\item[(C)] If $S_h^{(c)}(\epsilon)=+\infty$ for all $\epsilon>0$, then $\limsup_{t\to\infty} \|Y(t)\|=+\infty$ a.s.
\end{itemize}
\end{theorem}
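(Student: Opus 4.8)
The plan is to reduce the theorem to the continuous--time classification already carried out in \cite{JAJCAR:2012}, which establishes precisely the trichotomy asserted here for the solution $Y$ of \eqref{eq.linearSDE}, but phrased through the unit--step sum $S(\epsilon)$ of \eqref{def.Sepscns} rather than through $S_h^{(c)}(\epsilon)$. Since $S$ is exactly $S_h^{(c)}$ evaluated at $h=1$, and since in each of the three cases the hypothesis involves only the sum while the conclusion involves only $Y$, it is enough to prove the purely analytic fact that, for every fixed $h>0$, the sum $S_h^{(c)}(\epsilon)$ belongs to the same one of the three classes (A)/(B)/(C) as $S(\epsilon)$. Granting this, case (A) follows because the hypothesis $S_h^{(c)}(\epsilon)<\infty$ for all $\epsilon$ transfers to the same hypothesis on $S$, whence \cite{JAJCAR:2012} gives $Y(t)\to 0$; case (C) follows identically for the divergent alternative; and the threshold case (B) is then forced by elimination, with \cite{JAJCAR:2012} supplying the bounds $c_1\le\limsup_t\|Y(t)\|\le c_2$ together with $\liminf_t\|Y(t)\|=0$ and the Ces\`aro decay of $\|Y\|^2$.

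To carry out the comparison I would first replace both sums by their Mills--ratio surrogates, exactly as in Lemma~\ref{lemma.sepss'epsdisc}, applied once to the sequence with squared Frobenius norm $v_m:=\int_m^{m+1}\|\Sigma(t)\|_F^2\,dt$ and once to the sequence with squared Frobenius norm $v_n^{(h)}:=\frac1h\int_{nh}^{(n+1)h}\|\Sigma(t)\|_F^2\,dt$. This yields $S(\epsilon)<\infty\iff\sum_m\psi_\epsilon(v_m)<\infty$ and $S_h^{(c)}(\epsilon)<\infty\iff\sum_n\psi_\epsilon(v_n^{(h)})<\infty$, where $\psi_\epsilon(v)=\sqrt{v}\,\exp(-\epsilon^2/(2v))$ is strictly increasing in $v$ and satisfies the exact scaling $\psi_\epsilon(\lambda v)=\sqrt{\lambda}\,\psi_{\epsilon/\sqrt{\lambda}}(v)$. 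The structural input is that the unit partition and the $h$--partition of $[0,\infty)$ overlap with bounded multiplicity: each unit cell meets at most $\lceil 1/h\rceil+1$ of the $h$--cells and each $h$--cell at most $\lceil h\rceil+1$ unit cells. Because an average over overlapping cells is dominated by a fixed multiple of the largest summand, monotonicity and the scaling of $\psi_\epsilon$ then give two--sided estimates
\[
\sum_m \psi_\epsilon(v_m)\le K_h\sum_n \psi_{\epsilon/\sqrt{c_h}}(v_n^{(h)}),\qquad \sum_n \psi_\epsilon(v_n^{(h)})\le K_h'\sum_m \psi_{\epsilon/\sqrt{c_h'}}(v_m),
\]
in which the constants $K_h,K_h',c_h,c_h'$ depend on $h$ alone and not on $\epsilon$.

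These two inequalities are all that is required. Cases (A) and (C) are quantified over \emph{every} $\epsilon>0$, so the harmless rescaling $\epsilon\mapsto\epsilon/\sqrt{c_h}$ leaves them untouched, and the estimates show that $S$ is all--finite (respectively all--infinite) exactly when $S_h^{(c)}$ is; case (B) is whatever survives, and the monotonicity of $\epsilon\mapsto S_h^{(c)}(\epsilon)$ noted after \eqref{def.Seps} guarantees the threshold $\epsilon'$ demanded in the statement.

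The main obstacle is precisely this comparison across two different, and in general incommensurable, partitions of the time axis: the cell averages $v_m$ and $v_n^{(h)}$ are not comparable one to one, and $\psi_\epsilon$ is neither convex nor concave, so no single Jensen step is available. The point that makes the argument go through is that one never needs the two sums to have the same abscissa of convergence, only to share the coarse trichotomy; this is why the bounded--overlap estimates, which merely rescale $\epsilon$ by a fixed positive $h$--dependent factor, are adequate, and why the only case needing a second look, the intermediate case (B), causes no difficulty --- a fixed positive rescaling can neither send a finite positive threshold to $0$ nor to $+\infty$.
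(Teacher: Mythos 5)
Your argument is correct, but it is a genuinely different route from the paper's, because the paper in fact offers no proof of Theorem~\ref{thm.linsde} at all: it merely remarks that ``perusal of results in \cite{JAJCAR:2012} show that $S(\cdot)$ above can be replaced by $S_h^{(c)}$'', i.e.\ the authors implicitly re-run the continuous-time proofs of the cited paper with the uniform mesh $h$ in place of the unit mesh. You instead treat the cited mesh-one theorem (phrased via $S$ of \eqref{def.Sepscns}) as a black box and supply the missing analytic bridge between $S$ and $S_h^{(c)}$ of \eqref{def.Sepshcns}. I checked the bridge: the reduction of Lemma~\ref{lemma.sepss'epsdisc} uses only the Mills-ratio asymptotics and monotonicity, so it does apply to the general cell averages $v_m$ and $v_n^{(h)}$; the scaling identity $\psi_\epsilon(\lambda v)=\sqrt{\lambda}\,\psi_{\epsilon/\sqrt{\lambda}}(v)$ is exact; and from $v_m\le h\sum_{n\in O(m)} v_n^{(h)}\le hN_h\max_{n\in O(m)} v_n^{(h)}$ with $N_h=\lceil 1/h\rceil+1$, monotonicity of $\psi_\epsilon$, the scaling step, and outer multiplicity at most $\lceil h\rceil+1$, your first two-sided estimate follows with $c_h=hN_h$ and $K_h=(\lceil h\rceil+1)\sqrt{hN_h}$, the second being symmetric with $c_h'=(\lceil h\rceil+1)/h$. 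Since cases (A) and (C) quantify over every $\epsilon>0$, the fixed rescaling $\epsilon\mapsto\epsilon/\sqrt{c_h}$ is indeed harmless, and in the threshold case the sandwich $\epsilon'/\sqrt{c_h'}\le \epsilon''\le \epsilon'\sqrt{c_h}$ together with the monotonicity trichotomy (which holds for $S$ and $S_h^{(c)}$ exactly as noted after \eqref{def.Seps}) places $S$ in class (B), so the cited theorem delivers the bounds, the zero liminf, and the Ces\`aro decay. Your observation that only the coarse trichotomy transfers --- the two abscissae of convergence need not coincide, and the conclusion never requires them to --- is precisely the point that makes the covering argument sufficient despite $\psi_\epsilon$ being neither convex nor concave. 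Comparing the two routes: the paper's (tacit) approach yields the sharper fact that the mesh-$h$ proofs of \cite{JAJCAR:2012} work verbatim, but only by re-verifying that reference; yours is self-contained modulo the stated mesh-one result and actually turns the paper's unproved ``perusal'' sentence into a theorem, at the modest cost of losing control of the exact threshold, which is irrelevant here.
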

Similarly, we may replace $S$ by $S_h^{c}$ in a result of~\cite{JAJCAR:2012} 
to get
\begin{theorem} \label{thm.cnsclassify}
Suppose that $f$ obeys \eqref{eq.fglobalunperturbed} and \eqref{eq.fboundedbelow}.
Suppose that $X$ is a solution of \eqref{eq.sdefinite}. Let $h>0$ and suppose that
$S^{(c)}_h(\epsilon)$ be defined by \eqref{def.Sepshcns}.
\begin{itemize}
\item[(A)]  If $S_h^{(c)}(\epsilon)<+\infty$ for all $\epsilon>0$, then $\lim_{n\to\infty} X(t)=0$ a.s.
\item[(B)] If $S_h^{(c)}(\epsilon)<+\infty$ for all $\epsilon>\epsilon'$ and $S_h^{(c)}(\epsilon)=+\infty$ for all $\epsilon<\epsilon'$, then
there exist deterministic $0<c_3\leq c_4<+\infty$ such that
\[
c_3<\limsup_{t\to\infty} \|X(t)\|<c_4, \quad \text{a.s.}
\]
and 
\[
\liminf_{t\to\infty} \|X(t)\|=0, \quad\text{a.s.}
\]
\item[(C)]  If $S_h^{(c)}(\epsilon)=+\infty$ for all $\epsilon>0$, then $\limsup_{n\to\infty} \|X(t)\|=\infty$ a.s.
\end{itemize}
\end{theorem}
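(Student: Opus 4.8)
The plan is to deduce Theorem~\ref{thm.cnsclassify} directly from the continuous--time classification established in \cite{JAJCAR:2012}, which is phrased in terms of the functional $S(\epsilon)$ defined in \eqref{def.Sepscns}. Since that result already classifies the a.s. asymptotic behaviour of solutions of \eqref{eq.sdefinite} (under \eqref{eq.fglobalunperturbed} and \eqref{eq.fboundedbelow}) according to the trichotomy of $S$, the only genuinely new work is to show that $S(\epsilon)$ and $S_h^{(c)}(\epsilon)$ give rise to \emph{the same} trichotomy: that they are simultaneously finite for every $\epsilon>0$, simultaneously infinite for every $\epsilon>0$, and in the intermediate regime share a common critical threshold $\epsilon'$. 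Once this equivalence is in hand, each of the hypotheses (A), (B), (C) imposed on $S_h^{(c)}$ is precisely equivalent to the corresponding hypothesis on $S$, and the three conclusions follow verbatim from the cited theorem (in case (B) the constants $c_3,c_4$ are those produced by that theorem at the common threshold $\epsilon'$). I note that the very same comparison underlies Theorem~\ref{thm.linsde}, so it is economical to isolate it as a single lemma serving both results.

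The heart of the argument is therefore a comparison lemma in the spirit of Lemma~\ref{lemma.sepss'epsdisc}. First I would use the Mills ratio asymptotic \eqref{eq.millsasy} to reduce each sum to exponential form: writing $\theta_n^2=\int_n^{n+1}\|\Sigma(t)\|_F^2\,dt$ and $\tau_n^2=\frac1h\int_{nh}^{(n+1)h}\|\Sigma(t)\|_F^2\,dt$, the finiteness of $S(\epsilon)$ is equivalent to that of $\sum_n \theta_n e^{-\epsilon^2/(2\theta_n^2)}$, and similarly for $S_h^{(c)}(\epsilon)$ with $\theta_n$ replaced by $\tau_n$; this reduction is identical to the one carried out for $S_h$ versus $S_h'$ in the proof of Lemma~\ref{lemma.sepss'epsdisc}. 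The remaining step is to compare the two exponential sums, which differ only in the length of the averaging window used to form the local root--mean--square of $\Sigma$: length one for $\theta_n$ and length $h$ for $\tau_n$. Here an integral test is the natural device: each exponential sum is comparable to an integral $\int_0^\infty \Theta(s)\,e^{-\epsilon^2/(2\Theta(s)^2)}\,ds$ built from the continuous root--mean--square $\Theta$ of $\Sigma$, the only effect of the window size being an overall factor of order $h^{-1}$ that cannot alter finiteness, while the critical value $\epsilon'$ in the intermediate case is pinned to the same integral and is hence common to both sums.

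The main obstacle is exactly this last comparison: the trichotomy must be shown to be invariant under changing the averaging window from length one to length $h$, and this is delicate because $\theta_n$ and $\tau_n$ average $\|\Sigma\|_F^2$ over \emph{different} intervals before the nonlinearity $\theta\mapsto \theta e^{-\epsilon^2/(2\theta^2)}$ is applied. For a genuinely oscillatory $\Sigma$ the two families of averages need not be comparable term by term, so some regularity on $\Sigma$ is indispensable; under the standing hypothesis that the stochastic intensity is monotone, the integral test applies cleanly and the windows of length one and length $h$ yield comparable sums with the same critical threshold. I expect this monotonicity--driven integral comparison to be the only nontrivial ingredient, everything else being a transcription of the statements in \cite{JAJCAR:2012} with $S$ replaced by $S_h^{(c)}$.
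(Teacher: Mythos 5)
Your overall strategy---quote the continuous--time classification of \cite{JAJCAR:2012} for $S(\epsilon)$ and then transfer the hypotheses to $S_h^{(c)}(\epsilon)$---is genuinely different from the paper's, which performs no analytic comparison of the two functionals at all: the paper simply observes that the proofs in \cite{JAJCAR:2012} run verbatim over the mesh--$h$ partition, so that $S$ may be replaced by $S_h^{(c)}$ at the source. Your route is legitimate in principle, but as executed it has a genuine gap: the decisive comparison step is made to rest on monotonicity of $t\mapsto\|\Sigma(t)\|_F^2$, which you call ``the standing hypothesis''. It is not one. Theorem~\ref{thm.cnsclassify} assumes only the continuity \eqref{eq.Sigcns}; monotonicity enters the paper only later, in Theorem~\ref{thm.disccnseqvt}, and there for the \emph{different} comparison of the discrete sum $S_h$ with $S_h^{(c)}$ (the ``decreasing intensity'' of the abstract refers to that discretisation chain, not to this continuous--time theorem). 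So your argument proves the theorem only for monotone intensities. A second, smaller, error: the claim that $S$ and $S_h^{(c)}$ ``share a common critical threshold $\epsilon'$'' is false in general. Concentrate $\|\Sigma(\cdot)\|_F^2$ in continuous spikes of total mass $1/\log n$ inside $[n,n+\tfrac12]$, vanishing on $[n+\tfrac12,n+1]$, and take $h=\tfrac12$: then $\theta_n^2:=\int_n^{n+1}\|\Sigma(t)\|_F^2\,dt=1/\log n$ while the nonzero half--windows give $\tau_{2n}^2=2/\log n$, and the Mills ratio \eqref{eq.millsasy} places the thresholds at $\sqrt 2$ and $2$ respectively. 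This particular slip is harmless for the theorem, since the constants $c_3,c_4$ in case (B) are unspecified, but it shows the threshold cannot be ``pinned to the same integral'' as you assert.

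Moreover, the obstacle is not where you locate it: all the transfer argument needs is that $S$ and $S_h^{(c)}$ occupy the same \emph{cell} of the trichotomy, and that holds for arbitrary continuous $\Sigma$ by commensurability of the two partitions, with no integral test and no regularity. Say $h\le 1$, write $\tau_k^2=\tfrac1h\int_{kh}^{(k+1)h}\|\Sigma(t)\|_F^2\,dt$, and set $m=\lceil 1/h\rceil+1$: each $[n,n+1]$ meets at most $m$ intervals $[kh,(k+1)h]$ and each of the latter meets at most two unit intervals, so $\theta_n^2\le mh\max_k\tau_k^2$ over the covering and $h\tau_k^2\le\theta_n^2+\theta_{n+1}^2$; by monotonicity of $\Phi$ these give $S(\epsilon)\le 2\,S_h^{(c)}\bigl(\epsilon/\sqrt{mh}\bigr)$ and $S_h^{(c)}(\epsilon)\le 2m\,S\bigl(\sqrt{h}\,\epsilon/\sqrt{2}\bigr)$, which carry each of the cases (A), (B), (C) back and forth (with a possibly displaced threshold in (B), as above). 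Alternatively, you can avoid comparing sums altogether: $\tilde X(s):=X(sh)$ solves \eqref{eq.sdefinite} with drift $hf$---which still obeys \eqref{eq.fglobalunperturbed} and \eqref{eq.fboundedbelow}---and diffusion coefficient $\sqrt{h}\,\Sigma(sh)$ relative to the Brownian motion $\tilde B(s)=B(sh)/\sqrt{h}$, and the unit--mesh functional \eqref{def.Sepscns} of this rescaled equation is exactly $\epsilon\mapsto S_h^{(c)}(\epsilon/\sqrt{h})$; since the trichotomy class is invariant under scaling $\epsilon$ and $\tilde X$ and $X$ share their asymptotics, the mesh--one theorem of \cite{JAJCAR:2012} yields Theorem~\ref{thm.cnsclassify} directly, under exactly the stated hypotheses.
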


If we take a uniform step size $h>0$ in a forward Euler--discretisation of \eqref{eq.sdefinite}, this is tantamount to setting
\begin{equation} \label{eq.sighchoice1}
\sigma_h(n)=\Sigma(nh), \quad n\geq 0
\end{equation}
in \eqref{eq.splitstep}. In this case, the continuity of $\Sigma$ ensures for each fixed $n$ that
\[
\lim_{h\to 0} \left\{\frac{1}{h}\int_{nh}^{(n+1)h}
\|\Sigma(s)\|^2_F\,ds - \|\sigma_h(n)\|^2_F\right\} =0,
\]
so it can be seen that the conditions classifying the finiteness $S_h$ and $S_h^{c}$
are in some sense ``close''. We now give some examples where $S_h$ and $S_h^c$ share the same finiteness
properties, and therefore, the asymptotic behaviour of solutions of \eqref{eq.sdefinite} and \eqref{eq.splitstep}
coincide.

In the case when the integral  $\int_a^b \Sigma^2_{ij}(s)\,ds$ can be computed explicitly for any $0\leq a<b<+\infty$ and
$(i,j)\in \{1,\ldots,d\}\times\{1,\ldots,r\}$, it is  reasonable
to approximate the stochastic integral
 \[
 \int_{nh}^{(n+1)h} \Sigma_{ij}(s)\,dB_j(s) \text{ by } \left(\int_{nh}^{(n+1)h} \Sigma^2_{ij}(s)\,ds\right)^{1/2}\xi_j(n+1)
 \]
 where $\xi$ obeys Assumption~\ref{ass.normal}. This is because the two random variables displayed above have
 the same distribution.
 In terms of \eqref{eq.splitstep} (particularly \eqref{eq.SSupdate}) this amounts to
 choosing $\sigma_h$ according to
\begin{equation} \label{eq.sighchoice2}
[\sigma_h(n)]_{ij}=\frac{1}{\sqrt{h}}\left(\int_{nh}^{(n+1)h} \Sigma^2_{ij}(s)\,ds\right)^{1/2}, \quad n\geq 0, 
\quad (i,j)\in \{1,\ldots,d\}\times\{1,\ldots,r\}.
\end{equation}
In this case, it is seen that $S_h(\epsilon)=S_h^{c}(\epsilon)$. Applying Theorems~\ref{lemma.Xboundedabove} and~\ref{thm.cnsclassify},
we immediately have the following result.
\begin{theorem} \label{thm.disccnseqvt2}
Suppose that $f$ obeys \eqref{eq.fglobalunperturbed} and
\eqref{eq.fboundedbelow} and suppose that $\Sigma$ obeys
\eqref{eq.Sigcns}. Assume that the sequence $\xi$ obeys
Assumption~\ref{ass.normal}, and for $h>0$ that $f$ obeys
Assumption~\ref{ass.existl}. Let $X$ be a solution of \eqref{eq.sdefinite}
and $(X_h,X_h^\star)$ is a solution of \eqref{eq.splitstep}. Then
exactly one of the events
\begin{multline*}
\{\omega: \lim_{t\to\infty} X(t,\omega)=0\}, \\
\{\omega: 0<\limsup_{t\to\infty} \|X(t,\omega)\|<+\infty, \,\liminf_{t\to\infty} \|X(t,\omega)\|=0\}, \\
\text{and }
\{\omega: \limsup_{t\to\infty} \|X(t,\omega)\|=+\infty\}
\end{multline*}
is almost sure, and exactly one of the events
\begin{multline*}
\{\omega: \lim_{n\to\infty} X_h(n,\omega)=0\}, \\
\{\omega: 0<\limsup_{n\to\infty} \|X_h(n,\omega)\|<+\infty, \, \liminf_{n\to\infty} \|X_h(n,\omega)\|=0\}, \\
\text{and }
\{\omega: \limsup_{n\to\infty} \|X_h(n,\omega)\|=+\infty\}
\end{multline*}
is almost sure.

If $\sigma_h$ is given by \eqref{eq.sighchoice2}, and $n\mapsto \int_{nh}^{(n+1)h} \Sigma^2_{ij}(s)\,ds$ can be computed exactly
for all $(i,j)\in \{1,\ldots,d\}\times\{1,\ldots,r\}$ and all $n\in \mathbb{N}$, we have the
following equivalences:
\begin{itemize}
\item[(i)]  $\lim_{t\to\infty} X(t)=0$ a.s., if and only if $\lim_{n\to\infty} X_h(n)=0$ a.s.
\item[(ii)]  $\limsup_{t\to\infty} \|X(t)\|\in (0,\infty)$ a.s., if and only if $\limsup_{n\to\infty} \|X_h(n)\|\in (0,\infty)$ a.s.
\item[(iii)]  $\limsup_{t\to\infty} \|X(t)\|=+\infty$ a.s., if and only if $\limsup_{n\to\infty} \|X_h(n)\|=+\infty$ a.s.
\end{itemize}
\end{theorem}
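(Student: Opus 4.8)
The plan is to deduce this classification theorem directly from the two classification results already in hand, namely Theorem~\ref{lemma.Xboundedabove} for the split--step solution $X_h$ and Theorem~\ref{thm.cnsclassify} for the continuous solution $X$, by exploiting the monotonicity of the series $S_h(\epsilon)$ and $S_h^{(c)}(\epsilon)$ in $\epsilon$. The substantive analytic work has already been carried out in those theorems; what remains is a careful bookkeeping argument together with a single identity between $S_h$ and $S_h^{(c)}$ under the choice \eqref{eq.sighchoice2}.

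First I would record the trichotomy. Since $\Phi$ is increasing, each summand $1-\Phi(\epsilon/\|\sigma_h(n)\|_F)$ is non--increasing in $\epsilon$, so $\epsilon\mapsto S_h(\epsilon)$ is non--increasing, and hence (as noted before Lemma~\ref{lemma.Uasy}) exactly one of the following holds: (i) $S_h(\epsilon)<+\infty$ for all $\epsilon>0$; (ii) $S_h(\epsilon)=+\infty$ for all $\epsilon>0$; or (iii) there is $\epsilon'>0$ with $S_h(\epsilon)<+\infty$ for $\epsilon>\epsilon'$ and $S_h(\epsilon)=+\infty$ for $\epsilon<\epsilon'$. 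The identical argument applies verbatim to $S_h^{(c)}$. Feeding cases (i), (iii), (ii) into parts (A), (B), (C) of Theorem~\ref{lemma.Xboundedabove} shows that precisely one of the three displayed events for $X_h$ is almost sure; and since the three events are pairwise disjoint by their very definitions (a zero limit, a finite positive $\limsup$ with vanishing $\liminf$, and an infinite $\limsup$ are mutually exclusive), ``exactly one'' is justified. The same feeding of the $S_h^{(c)}$--trichotomy into Theorem~\ref{thm.cnsclassify} yields the corresponding statement for $X$.

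For the three equivalences I would first verify the pointwise identity $S_h(\epsilon)=S_h^{(c)}(\epsilon)$ under the choice \eqref{eq.sighchoice2}. Squaring and summing that definition gives
\[
\|\sigma_h(n)\|_F^2=\sum_{i,j}[\sigma_h(n)]_{ij}^2=\frac{1}{h}\int_{nh}^{(n+1)h}\|\Sigma(s)\|_F^2\,ds,
\]
so the argument of $\Phi$ in \eqref{def.Seps} equals the argument of $\Phi$ in \eqref{def.Sepshcns} term by term, whence $S_h(\epsilon)=S_h^{(c)}(\epsilon)$ for every $\epsilon>0$. In particular $S_h$ and $S_h^{(c)}$ fall into the same case of the trichotomy. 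Matching case (i) on both sides gives equivalence (i) ($X\to0$ a.s. $\Leftrightarrow$ $X_h\to0$ a.s.); matching case (iii) gives equivalence (ii) (a finite positive $\limsup$ for both); and matching case (ii) gives equivalence (iii) (an infinite $\limsup$ for both).

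The hard part here is not conceptual but organisational: one must ensure that each of the three events is an \emph{almost sure} event rather than merely one of positive probability, and this is exactly what the hypotheses of Theorem~\ref{lemma.Xboundedabove} and Theorem~\ref{thm.cnsclassify} deliver, since each of their parts (A), (B), (C) concludes with an almost sure statement and the deterministic trichotomy selects one and only one of them. I would also emphasise that the equivalences rely on the integrals $\int_{nh}^{(n+1)h}\Sigma_{ij}^2(s)\,ds$ being computed exactly, as it is precisely this exactness that upgrades the mere ``closeness'' discussed after \eqref{eq.sighchoice1} into the genuine equality $S_h=S_h^{(c)}$ on which the matching of cases depends.
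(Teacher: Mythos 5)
Your proposal is correct and takes essentially the same route as the paper: the paper likewise observes that under the choice \eqref{eq.sighchoice2} one has $S_h(\epsilon)=S_h^{(c)}(\epsilon)$ and then obtains the theorem immediately by feeding the trichotomy of $S_h$ into Theorems~\ref{lemma.Xboundedabove} and~\ref{thm.cnsclassify}, with the case--matching argument you describe spelled out explicitly in the paper's proof of the sibling Theorem~\ref{thm.disccnseqvt}. Your computation $\|\sigma_h(n)\|_F^2=\frac{1}{h}\int_{nh}^{(n+1)h}\|\Sigma(s)\|_F^2\,ds$ merely makes explicit the identity that the paper asserts without calculation.
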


We next consider a situation where finiteness conditions  on $S_h(\epsilon)$ and $S_h^{c}(\epsilon)$ also coincide,
but in which we do not need to have a closed--form expression for $\int_a^b \Sigma_{ij}^2(s)\,ds$. This is the case when
$t\mapsto \|\Sigma(t)\|_F^2$ is decreasing and $\sigma_h(n)=\Sigma(nh)$.
\begin{theorem} \label{thm.disccnseqvt}
Suppose that $f$ obeys \eqref{eq.fglobalunperturbed} and
\eqref{eq.fboundedbelow} and suppose that $\Sigma$ obeys
\eqref{eq.Sigcns}. Assume that the sequence $\xi$ obeys
Assumption~\ref{ass.normal}, and for $h>0$ that $f$ obeys
Assumption~\ref{ass.existl}. Let $X$ be a solution of \eqref{eq.sdefinite}
and $(X_h,X_h^\star)$ is a solution of \eqref{eq.splitstep}. Then
exactly one of the events
\begin{multline*}
\{\omega: \lim_{t\to\infty} X(t,\omega)=0\}, \\
\{\omega: 0<\limsup_{t\to\infty} \|X(t,\omega)\|<+\infty,\,\liminf_{t\to\infty}\|X(t,\omega)\|=0\}, \\
\text{and }
\{\omega: \limsup_{t\to\infty} \|X(t,\omega)\|=+\infty\}
\end{multline*}
is almost sure, and exactly one of the events
\begin{multline*}
\{\omega: \lim_{n\to\infty} X_h(n,\omega)=0\}, \\
\{\omega: 0<\limsup_{n\to\infty} \|X_h(n,\omega)\|<+\infty,\,\liminf_{n\to\infty}\|X_h(n,\omega)\|=0\}, \\
\text{and }
\{\omega: \limsup_{n\to\infty} \|X_h(n,\omega)\|=+\infty\}
\end{multline*}
is almost sure.

If we further suppose that $t\mapsto \|\Sigma(t)\|^2_F$ is non--increasing, and $\sigma_h(n)$ is given by \eqref{eq.sighchoice1},
we have the following equivalences:
\begin{itemize}
\item[(i)]  $\lim_{t\to\infty} X(t)=0$ a.s., if and only if $\lim_{n\to\infty} X_h(n)=0$ a.s.
\item[(ii)]  $\limsup_{t\to\infty} \|X(t)\|\in (0,\infty)$ a.s., if and only if $\limsup_{n\to\infty} \|X_h(n)\|\in (0,\infty)$ a.s.
\item[(iii)]  $\limsup_{t\to\infty} \|X(t)\|=+\infty$ a.s., if and only if $\limsup_{n\to\infty} \|X_h(n)\|=+\infty$ a.s.
\end{itemize}
\end{theorem}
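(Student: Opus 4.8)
The plan is to obtain the whole statement by threading the two classification theorems already proved---Theorem~\ref{lemma.Xboundedabove} for the discretised solution $X_h$, governed by the finiteness of $S_h(\epsilon)$, and Theorem~\ref{thm.cnsclassify} for the continuous solution $X$, governed by the finiteness of $S_h^{(c)}(\epsilon)$---through the $\epsilon$-trichotomy recorded after \eqref{def.Seps}. For the first assertion I would note only that $S_h(\epsilon)$ and $S_h^{(c)}(\epsilon)$ are sums of nonnegative terms that are monotone in $\epsilon>0$, so each falls into exactly one of the three cases: finite for all $\epsilon$; infinite for all $\epsilon$; or finite above a threshold $\epsilon'$ and infinite below it. Feeding these three cases into Theorem~\ref{thm.cnsclassify} shows that exactly one of the three displayed events is almost sure for $X$, and feeding them into Theorem~\ref{lemma.Xboundedabove} does the same for $X_h$. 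This part needs no relation between $S_h$ and $S_h^{(c)}$.

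For the equivalences (i)--(iii) the additional ingredient I need is that, when $\sigma_h(n)=\Sigma(nh)$ and $t\mapsto\|\Sigma(t)\|_F^2$ is non-increasing, $S_h(\epsilon)$ and $S_h^{(c)}(\epsilon)$ are finite or infinite \emph{together} for each fixed $\epsilon>0$. Writing $\bar\sigma_h(n)^2=\tfrac1h\int_{nh}^{(n+1)h}\|\Sigma(t)\|_F^2\,dt$ for the quantity appearing in the denominators of $S_h^{(c)}$, monotonicity of $\|\Sigma(\cdot)\|_F^2$ gives $\|\Sigma((n+1)h)\|_F^2\le\|\Sigma(t)\|_F^2\le\|\Sigma(nh)\|_F^2$ on $[nh,(n+1)h]$, and averaging over that interval yields the sandwich
\[
\|\Sigma((n+1)h)\|_F \le \bar\sigma_h(n) \le \|\Sigma(nh)\|_F = \|\sigma_h(n)\|_F .
\]
Since $s\mapsto 1-\Phi(\epsilon/s)$ is increasing in $s>0$, this transfers termwise to the summands. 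Summing the upper bound gives $S_h^{(c)}(\epsilon)\le S_h(\epsilon)$, and summing the lower bound and re-indexing gives $S_h^{(c)}(\epsilon)\ge S_h(\epsilon)-\{1-\Phi(\epsilon/\|\Sigma(0)\|_F)\}$, where the subtracted term is at most one. Hence the two sums differ by a bounded amount and share the same finiteness for each $\epsilon$. (If $\|\Sigma(nh)\|_F=0$ the relevant summands vanish under the stated convention, and monotonicity keeps $\|\Sigma(\cdot)\|_F$ zero thereafter, so the comparison is untouched.)

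To conclude I would combine the two steps: because the per-$\epsilon$ finiteness of $S_h(\epsilon)$ and $S_h^{(c)}(\epsilon)$ coincide, the two functions sit in the same one of the three cases, with the same threshold in the intermediate case. Therefore $X$ and $X_h$ are simultaneously classified by the matching parts of Theorem~\ref{thm.cnsclassify} and Theorem~\ref{lemma.Xboundedabove}: case (A) for both delivers equivalence~(i), case (B) delivers equivalence~(ii), and case (C) delivers equivalence~(iii).

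I expect the only genuine work to be the sandwich comparison of the denominators; the rest is bookkeeping that routes the two established classifications through the $\epsilon$-trichotomy. The single point demanding care is the re-indexing in the lower bound, which discards one boundary term, together with the degenerate case $\|\Sigma(nh)\|_F=0$---neither disturbs the finiteness dichotomy, so no strengthening of the monotonicity hypothesis is required.
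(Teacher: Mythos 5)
Your proposal is correct and follows essentially the same route as the paper's proof: the identical sandwich $\|\sigma_h(n+1)\|_F\leq\vartheta_h(n)\leq\|\sigma_h(n)\|_F$ from monotonicity of $t\mapsto\|\Sigma(t)\|_F^2$, the termwise transfer through the increasing map $s\mapsto 1-\Phi(\epsilon/s)$, summation with one boundary term discarded to get $S_h(\epsilon)-\{1-\Phi(\epsilon/\|\sigma_h(0)\|_F)\}\leq S_h^{(c)}(\epsilon)\leq S_h(\epsilon)$, and then routing both classifications through the $\epsilon$-trichotomy via Theorems~\ref{thm.cnsclassify} and~\ref{lemma.Xboundedabove}. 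Your explicit handling of the degenerate case $\|\Sigma(nh)\|_F=0$ is a small extra care the paper omits, but it does not change the argument.
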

\begin{proof}
Define $\vartheta_h(n)^2=\int_{nh}^{(n+1)h}\|\Sigma(t)\|^2_F\,dt/h$. Since $t\mapsto\|\Sigma(t)\|^2_F$
is non--increasing, for $t\in [nh,(n+1)h]$ we have
$\|\Sigma((n+1)h)\|_F^2\leq \|\Sigma(t)\|_F^2 \leq \|\Sigma(nh)\|_F^2$. Therefore
integrating over $[nh,(n+1)h]$ and using \eqref{eq.sighchoice1} we get $\|\sigma_h(n+1)\|_F\leq
\vartheta_h(n) \leq \|\sigma_h(n)\|_F$.
For $\epsilon>0$, as $\Phi$ is increasing, we have
\[
1-\Phi\left(\frac{\epsilon}{\|\sigma_h(n+1)\|_F}\right)
\leq
1-\Phi\left(\frac{\epsilon}{\|\vartheta_h(n)\|_F}\right)
\leq
1-\Phi\left(\frac{\epsilon}{\|\sigma_h(n)\|_F}\right).
\]
Summing across this inequality and using the definitions \eqref{def.Seps} and \eqref{def.Sepshcns}
we get
\[
S_h(\epsilon)-\left\{ 1-\Phi\left(\frac{\epsilon}{\|\sigma_h(0)\|_F}\right) \right\}
\leq S_h^{(c)}(\epsilon) \leq  S_h(\epsilon).
\]
Therefore, for any $\epsilon>0$, $S_h(\epsilon)$ is finite if and only if $S_h^{(c)}(\epsilon)$ is finite.

We now prove the equivalence (i). Suppose that $\lim_{t\to\infty} X(t)=0$ a.s.
Then, as $S_h^{(c)}(\epsilon)$ must be (i) finite for all $\epsilon>0$; (ii) infinite for all $\epsilon>0$;
or (iii) finite for all $\epsilon>\epsilon'$ and infinite for all $\epsilon<\epsilon'$ for some $\epsilon'>0$, it follows from Theorem~\ref{thm.cnsclassify} that $S_h^{(c)}(\epsilon)<+\infty$ for all $\epsilon>0$.
Therefore, we have that $S_h(\epsilon)<+\infty$ for all $\epsilon>0$. Theorem~\ref{lemma.Xboundedabove}
now implies that $X_h(n)\to 0$ as $n\to\infty$ a.s.

Conversely, suppose that $X_h(n)\to 0$ as $n\to\infty$ a.s. Since
$S_h(\epsilon)$ must be (i) finite for all $\epsilon>0$; (ii) infinite for all $\epsilon>0$;
or (iii) finite for all $\epsilon>\epsilon'$ and infinite for all $\epsilon<\epsilon'$ for some
$\epsilon'>0$, it follows from Theorem~\ref{lemma.Xboundedabove} that $S_h(\epsilon)<+\infty$ for all
$\epsilon>0$. Therefore, we have that $S_h^{(c)}(\epsilon)<+\infty$ for all $\epsilon>0$, and hence by  Theorem~\ref{thm.cnsclassify}, $X(t)\to 0$ as $t\to\infty$ a.s., completing the proof of (i).

The proof of the equivalences (ii) and (iii) are similar, and hence omitted.
\end{proof}
The condition that $S'_h(\epsilon)$ is finite or infinite can be difficult to check. However we can provide a sufficient condition on which each case of $S'_h(\epsilon)$ being finite all the time, sometime finite sometime infinite and infinite all the time is possible according to whether $\lim_{t\to +\infty}\|\sigma_h(n)\|^2_F\log n$ being zero, non-zero and finite, or infinite. Therefore the asymptotic behaviour of the solution of 
\eqref{eq.splitstep} (and indeed \eqref{eq.sdefinite}) can be classified completely.
\begin{lemma}\label{lemma.sigmalog}
Define $\lim_{n\to \infty}\|\sigma_h(n)\|^2_F\log n=L\in[0,\infty]$, then we have the following:
\begin{itemize}
\item[(A)] If $L=0$, then $S'_h(\epsilon)<+\infty$ for all $\epsilon>0$;
\item[(B)] If $L\in(0,+\infty)$, then $S'_h(\epsilon)<+\infty$ for all $\epsilon>\epsilon'$ and $S'_h(\epsilon)=+\infty$ for all $\epsilon<\epsilon'$;
\item[(C)] If $L=+\infty$, then $S'_h(\epsilon)=+\infty$ for all $\epsilon>0$
\end{itemize}
\end{lemma}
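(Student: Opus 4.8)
The plan is to write $a_n:=\|\sigma_h(n)\|_F$ and $b_n:=a_n^2$, so that the $n$-th summand of $S_h'(\epsilon)$ is $g_n(\epsilon):=\sqrt{b_n}\,\exp\!\big(-\epsilon^2/(2b_n)\big)$, with the convention that $g_n(\epsilon)=0$ when $b_n=0$. Everything reduces to converting the hypothesis $b_n\log n\to L$ into two-sided bounds of the form $c/\log n\le b_n\le \delta/\log n$ and then comparing $g_n(\epsilon)$ with a power $n^{-p}$ decorated by a slowly-varying factor $(\log n)^{-1/2}$. The two elementary ``engines'' I would isolate first are these: whenever $b_n\le \delta/\log n$ one has $\exp(-\epsilon^2/(2b_n))\le n^{-\epsilon^2/(2\delta)}$, which drives the convergence cases; and whenever $b_n\ge c/\log n$ one has simultaneously $\exp(-\epsilon^2/(2b_n))\ge n^{-\epsilon^2/(2c)}$ and $\sqrt{b_n}\ge\sqrt{c/\log n}$, which drives the divergence cases. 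I would also note that in cases (A) and (B) the limit forces $b_n\to0$, so $\sqrt{b_n}$ is bounded (this is not needed in (C)).

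For (A), the condition $b_n\log n\to0$ gives $b_n\le\delta/\log n$ eventually for \emph{every} $\delta>0$. Fixing any $\epsilon>0$ and choosing $\delta<\epsilon^2/2$, the convergence engine yields $g_n(\epsilon)\le(\sup_m\sqrt{b_m})\,n^{-p}$ with $p=\epsilon^2/(2\delta)>1$, and comparison with $\sum n^{-p}$ finishes the case. For (B), for each $\eta>0$ one eventually has $(L-\eta)/\log n<b_n<(L+\eta)/\log n$. If $\epsilon>\sqrt{2L}$, pick $\eta$ so small that $\epsilon^2/(2(L+\eta))>1$; the upper estimate together with boundedness of $\sqrt{b_n}$ gives convergence by comparison with a summable power. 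If $\epsilon<\sqrt{2L}$, pick $\eta$ so small that $q:=\epsilon^2/(2(L-\eta))<1$; the lower estimate gives $g_n(\epsilon)\ge \sqrt{(L-\eta)/\log n}\;n^{-q}$, which diverges. This identifies $\epsilon'=\sqrt{2L}$.

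For (C), the hypothesis $b_n\log n\to+\infty$ gives, for every $M>0$, the bound $b_n>M/\log n$ eventually, hence $g_n(\epsilon)\ge\sqrt{M}\,n^{-q}/\sqrt{\log n}$ with $q=\epsilon^2/(2M)$. Choosing $M>\epsilon^2/2$ makes $q<1$, and divergence follows for every $\epsilon>0$. I would remark that this argument uses only $b_n\ge M/\log n$ and never that $b_n\to0$, so it automatically covers the degenerate possibility that $b_n$ stays bounded away from $0$ or tends to infinity, where the terms of $S_h'(\epsilon)$ simply fail to tend to zero.

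The single recurring technical point — and the only genuine obstacle — is the divergence of $\sum_n n^{-q}/\sqrt{\log n}$ for $0<q<1$, which I would prove once and reuse in (B) and (C): choosing $q'\in(q,1)$, the inequality $n^{q'-q}\ge\sqrt{\log n}$ holds for all large $n$, so $n^{-q}/\sqrt{\log n}\ge n^{-q'}$ eventually, and $\sum n^{-q'}=+\infty$ since $q'<1$. The companion convergence fact ($\sum n^{-p}<\infty$ for $p>1$) is standard, and there the extra $(\log n)^{-1/2}$ factor only improves matters. Working with strict inequalities in $\epsilon$ (that is, never at $\epsilon=\sqrt{2L}$) ensures the logarithmic factor never has to adjudicate the critical exponent, which the statement of (B) deliberately leaves unclaimed.
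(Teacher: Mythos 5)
Your proof is correct, but it takes a genuinely different route from the paper's. The paper never works with the summand of $S_h'(\epsilon)$ directly: it proves the trichotomy for $S_h(\epsilon)$ by first extracting from the Mills-ratio asymptotic $\lim_{x\to\infty}(1-\Phi(x))/(x^{-1}e^{-x^2/2})=1/\sqrt{2\pi}$ the logarithmic limit $\lim_{n\to\infty}\log\bigl(1-\Phi(\epsilon/\|\sigma_h(n)\|_F)\bigr)/\log n=-\epsilon^2/(2L)$, then comparing the Gaussian tail terms against $n^{-2}$ (case (A)), against $n^{-p}$ with $p\gtrless 1$ (case (B)), and via a contradiction argument against $n^{-1/2}$ (case (C)); finally it transfers the conclusion to $S_h'(\epsilon)$ by invoking the equivalence in Lemma~\ref{lemma.sepss'epsdisc}. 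You instead attack $S_h'(\epsilon)$ head-on, converting $\|\sigma_h(n)\|_F^2\log n\to L$ into two-sided bounds $c/\log n\le b_n\le\delta/\log n$ and comparing $\sqrt{b_n}\exp(-\epsilon^2/(2b_n))$ with powers $n^{-p}$, handling the only delicate point --- divergence of $\sum_n n^{-q}(\log n)^{-1/2}$ for $q<1$ --- by the elementary absorption $n^{q'-q}\ge\sqrt{\log n}$. What your approach buys: it is self-contained (no Mills ratio, no appeal to Lemma~\ref{lemma.sepss'epsdisc}, so no dependence on $\Phi$ at all), it makes the critical value $\epsilon'=\sqrt{2L}$ explicit where the paper's statement leaves it implicit, and your observation in case (C) that only the lower bound $b_n\ge M/\log n$ is used cleanly covers the degenerate situation $\|\sigma_h(n)\|_F\not\to 0$, which the paper handles more opaquely through its contradiction argument (needed there because the Mills-ratio asymptotic requires $\epsilon/\|\sigma_h(n)\|_F\to\infty$). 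What the paper's route buys is economy within its own architecture: having already established Lemma~\ref{lemma.sepss'epsdisc}, it reuses the same logarithmic-limit computation for all three cases and states the conclusion simultaneously for $S_h$ and $S_h'$, whereas your argument, as written, classifies only $S_h'(\epsilon)$ and would need that lemma anyway to say anything about $S_h(\epsilon)$ itself.
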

\begin{proof}
Notice from e.g., \cite[Problem 2.9.22]{K&S}, $\lim_{x\to\infty}(1-\Phi(x))/(x^{-1}e^{-x^2/2})=1/\sqrt{2\pi}$.
Therefore we have 
\[
\lim_{x\to \infty}\log(1-\Phi(x))+\log x+x^2/2=\log(1/\sqrt{2\pi}),
\]
hence
\[
\lim_{x\to \infty}\frac{\log(1-\Phi(x))}{x^2/2}=-1.
\]
Let $x=\epsilon/\|\sigma_h(n)\|_F\to \infty$ as $n\to \infty$, we have 
\[
\lim_{n\to\infty}\frac{\log(1-\Phi(\epsilon/\|\sigma_h(n)\|_F))}{\epsilon^2/2\|\sigma_h(n)\|_F}=-1.
\]
Moreover, 
\begin{align*}
\lim_{n\to\infty}\frac{\log(1-\Phi(\epsilon/\|\sigma_h(n)\|_F))}{\log n}&=
\lim_{n\to\infty}\frac{\log(1-\Phi(\epsilon/\|\sigma_h(n)\|_F))}{\epsilon^2/2\|\sigma_h(n)\|_F}\cdot\frac{\epsilon^2/2\|\sigma_h(n)\|_F}{\log n}\\
&=-\frac{\epsilon^2}{2}\lim_{n\to \infty}\frac{1}{\|\sigma_h(n)\|_F\log n}
\end{align*}
If $L=0$, then 
\[
\lim_{n\to\infty}\frac{\log(1-\Phi(\epsilon/\|\sigma_h(n)\|_F))}{\log n}\to -\infty.
\]
Therefore there exists an $N(\epsilon)$, such that for $n>N(\epsilon)$ 
\begin{align*}
\log(1-\Phi(\epsilon/\|\sigma_h(n)\|_F))&<-2\log n\\
1-\Phi(\epsilon/\|\sigma_h(n)\|_F)&\leq n^{-2}\to 0 \quad\text{as $n\to \infty$}
\end{align*}
This implies that $S_h(\epsilon)<+\infty$, which implies $S'_h(\epsilon)<+\infty$ by Lemma \ref{lemma.sepss'epsdisc} proving part (A).

If $L\in(0,+\infty)$, we have 
\[
\lim_{n\to\infty}\frac{\log(1-\Phi(\epsilon/\|\sigma_h(n)\|_F))}{\log n}=\frac{-\epsilon^2}{2L}.
\]
Therefore either $\epsilon>\sqrt{2L}$, in which case $\lim_{n\to\infty}1-\Phi(\epsilon/\|\sigma_h(n)\|_F)=0$, hence $S_h(\epsilon)<+\infty$, and $S'_h(\epsilon)<+\infty$. Or $\epsilon<\sqrt{2L}$, in which case $1-\Phi(\epsilon/\|\sigma_h(n)\|_F)$ is not going to zero, hence not summable, therefore $S_h(\epsilon)=+\infty$ which implies $S'_h(\epsilon)=+\infty$.

Finally, if $L=+\infty$, suppose that $S_h(\epsilon)<+\infty$, then
\[
\lim_{n\to\infty}\frac{\log(1-\Phi(\epsilon/\|\sigma_h(n)\|_F))}{\log n}=0.
\]
Then for all $\epsilon>0$, there exists an $N(\epsilon)>0$ such that
\begin{align*}
\frac{\log(1-\Phi(\epsilon/\|\sigma_h(n)\|_F))}{\log n}&>-1/2\\
\log(1-\Phi(\epsilon/\|\sigma_h(n)\|_F))&>-1/2\log n=\log n^{-1/2}\\
1-\Phi(\epsilon/\|\sigma_h(n)\|_F)&>n^{-1/2}\quad \text{for all $n\geq N(\epsilon)$}
\end{align*}
This implies $S_h(\epsilon)=+\infty$, which is a contradiction, hence the required result, completing the proof.
\end{proof}


\section{Preliminary Results}
In this section, we deduce some simple preliminary facts about \eqref{eq.splitstep} contingent on a solution $(X_h,X^\star_h)$ existing.
We also present some results on the asymptotic behaviour of martingales that will be of utility in the sequel.
\subsection{Estimates and representation}
In our next result, we obtain a representation for $\|X_h(n)\|^2$.
\begin{lemma} \label{lemma.repX2}
Suppose $(X_h,X^\star_h)$ is a solution of \eqref{eq.splitstep}. Then
\begin{multline} \label{eq.X2sum}
\|X_h(n)\|^2=\|X_h(0)\|^2 - 2\sum_{i=1}^n h\langle f(X_h^\star(i-1)), X_h^\star(i-1)\rangle +\sum_{i=1}^n h\|\sigma_h(i-1)\xi(i)\|^2
\\- \sum_{i=1}^n h^2\|f(X_h^\star(i-1))\|^2+ M(n), \quad n\geq 1,
\end{multline}
where
\begin{gather} \label{def.Yi}
Y^{(j)}(n)= 2\sqrt{h} \sum_{k=1}^d [X_h^\star(n)]_k  [\sigma_h(n)]_{kj}, \quad j=1,\ldots,r, \quad n\geq 1,\\
\label{def.M2}
M(n)=\sum_{i=1}^n \sum_{j=1}^r Y^{(j)}(i-1)\xi^{(j)}(i), \quad n\geq 1.
\end{gather}
\end{lemma}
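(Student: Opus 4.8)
The plan is to derive a single-step identity relating $\|X_h(n+1)\|^2$ to $\|X_h(n)\|^2$ and then to telescope it from the initial condition. First I would use the update rule \eqref{eq.SSupdate}, $X_h(n+1)=X_h^\star(n)+\sqrt{h}\sigma_h(n)\xi(n+1)$, and expand the square of the Euclidean norm to obtain
\[
\|X_h(n+1)\|^2=\|X_h^\star(n)\|^2+2\sqrt{h}\langle X_h^\star(n),\sigma_h(n)\xi(n+1)\rangle+h\|\sigma_h(n)\xi(n+1)\|^2.
\]
The cross term is then rewritten coordinatewise: since $[\sigma_h(n)\xi(n+1)]_k=\sum_{j=1}^r[\sigma_h(n)]_{kj}\xi^{(j)}(n+1)$, interchanging the two finite sums gives $2\sqrt{h}\langle X_h^\star(n),\sigma_h(n)\xi(n+1)\rangle=\sum_{j=1}^r Y^{(j)}(n)\xi^{(j)}(n+1)$ with $Y^{(j)}(n)$ exactly as in \eqref{def.Yi}. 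This already isolates the martingale--difference contribution that will accumulate to $M(n)$.

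Next I would eliminate $\|X_h^\star(n)\|^2$ in favour of $\|X_h(n)\|^2$ by using the implicit step \eqref{eq.SSXast} in the form $X_h(n)=X_h^\star(n)+hf(X_h^\star(n))$. Expanding the square yields
\[
\|X_h(n)\|^2=\|X_h^\star(n)\|^2+2h\langle X_h^\star(n),f(X_h^\star(n))\rangle+h^2\|f(X_h^\star(n))\|^2,
\]
so that $\|X_h^\star(n)\|^2=\|X_h(n)\|^2-2h\langle f(X_h^\star(n)),X_h^\star(n)\rangle-h^2\|f(X_h^\star(n))\|^2$. Substituting this into the expansion of $\|X_h(n+1)\|^2$ produces the one-step increment
\[
\|X_h(n+1)\|^2-\|X_h(n)\|^2=-2h\langle f(X_h^\star(n)),X_h^\star(n)\rangle-h^2\|f(X_h^\star(n))\|^2+h\|\sigma_h(n)\xi(n+1)\|^2+\sum_{j=1}^r Y^{(j)}(n)\xi^{(j)}(n+1).
\]

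Finally I would sum this increment over the steps, writing $\|X_h(n)\|^2=\|X_h(0)\|^2+\sum_{i=1}^n\bigl(\|X_h(i)\|^2-\|X_h(i-1)\|^2\bigr)$ and applying the one-step identity with $n$ replaced by $i-1$. The four resulting sums on the right then match termwise the four sums appearing in \eqref{eq.X2sum}, while the accumulated cross terms give $\sum_{i=1}^n\sum_{j=1}^r Y^{(j)}(i-1)\xi^{(j)}(i)=M(n)$, as defined in \eqref{def.M2}. There is no substantive obstacle in this argument; it is an elementary expansion--and--telescope computation valid pathwise for any solution $(X_h,X^\star_h)$ of \eqref{eq.splitstep}. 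The only points requiring care are the coordinate bookkeeping that matches the cross term to the definition of $Y^{(j)}$, and the index shift $n\mapsto i-1$, $n+1\mapsto i$ in the telescoping so that each summand is indexed correctly.
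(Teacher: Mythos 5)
Your proposal is correct and follows essentially the same route as the paper's own proof: expand $\|X_h(n+1)\|^2$ via \eqref{eq.SSupdate}, eliminate $\|X_h^\star(n)\|^2$ by writing \eqref{eq.SSXast} as $X_h(n)=X_h^\star(n)+hf(X_h^\star(n))$ and expanding, then telescope, with the same coordinatewise bookkeeping identifying the cross term with $\sum_{j=1}^r Y^{(j)}(n)\xi^{(j)}(n+1)$. The only cosmetic difference is ordering --- the paper verifies the representation $M(n)=2\sqrt{h}\sum_{i=1}^n\langle X_h^\star(i-1),\sigma_h(i-1)\xi(i)\rangle$ up front, whereas you identify the martingale term within the one-step identity --- which does not affect correctness.
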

\begin{proof}
Notice that with $Y^{(j)}$ as defined in \eqref{def.Yi} and $M$ as defined in \eqref{def.M2}, we have
\begin{align*}
M(n)
&= \sum_{i=1}^n \sum_{j=1}^r \left(2\sqrt{h} \sum_{k=1}^d [X_h^\star(i-1)]_k  [\sigma_h(i-1)]_{kj}\right)\xi^{(j)}(i)\\
&= \sum_{i=1}^n \sum_{k=1}^d 2\sqrt{h} [X_h^\star(i-1)]_k \sum_{j=1}^r [\sigma_h(i-1)]_{kj}\xi^{(j)}(i)\\
&= \sum_{i=1}^n \sum_{k=1}^d 2\sqrt{h} [X_h^\star(i-1)]_k [\sigma_h(i-1)\xi(i)]_k,
\end{align*}
so that $M$ defined by \eqref{def.M2} obeys
\begin{equation} \label{eq.M2innerprod}
M(n)=2\sqrt{h} \sum_{i=1}^n \langle X_h^\star(i-1), \sigma_h(i-1)\xi(i)\rangle, \quad n\geq 1.
\end{equation}
Next, we rewrite \eqref{eq.SSXast} according to
$X_h(n)=X_h^\star(n)+hf(X_h^\star(n))$. Then
\begin{equation} \label{eq.Xh2Xhst2}
\|X_h(n)\|^2=\|X_h^\star(n)\|^2+2h\langle f(X_h^\star(n)), X_h^\star(n)\rangle + h^2\|f(X_h^\star(n))\|^2.
\end{equation}
From \eqref{eq.SSupdate}, for $n\geq 0$ we get
\[
\|X_h(n+1)\|^2=\|X_h^\star(n)\|^2+h\|\sigma_h(n)\xi(n+1)\|^2+ 2\sqrt{h}\langle X_h^\star(n), \sigma_h(n)\xi(n+1)\rangle,
\]
so by using \eqref{eq.Xh2Xhst2} we get
\begin{multline} \label{eq.Xnp12Xn2}
\|X_h(n+1)\|^2=\|X_h(n)\|^2 - 2h\langle f(X_h^\star(n)), X_h^\star(n)\rangle - h^2\|f(X_h^\star(n))\|^2
\\+h\|\sigma_h(n)\xi(n+1)\|^2+ 2\sqrt{h}\langle X_h^\star(n), \sigma_h(n)\xi(n+1)\rangle.
\end{multline}
Therefore for $n\geq 1$, by summing on both sides, and using \eqref{eq.M2innerprod} we have
\begin{multline*}
\|X_h(n)\|^2=\|X_h(0)\|^2 +\sum_{i=1}^n h\left\{-2\langle f(X_h^\star(i-1)), X_h^\star(i-1)\rangle + \|\sigma_h(i-1)\xi(i)\|^2\right\}
\\- \sum_{i=1}^n h^2\|f(X_h^\star(i-1))\|^2+ M(n),
\end{multline*}
where $M$ is defined in \eqref{def.M2}, as claimed.
\end{proof}
\subsection{A result on the asymptotic behaviour of martingales}
We prove now a useful lemma on the asymptotic behaviour of a martingale built from $\xi$ and sequences adapted to its natural filtration.
It is based on a result of Bramson, Questel and Rosenthal~\cite[Theorem 1.1]{bramsquestrosen:2004}.
\begin{lemma}  \label{lemma.bramsquesrosen}
Let $M=\{M(n):n\geq 1\}$ be a martingale with respect to the filtration $(\mathcal{F}(n))_{n\geq 0}$ of $\sigma$--fields on a probability space
$(\Omega,\mathcal{F},\mathbb{P})$ such that
\[
M(n)=\sum_{i=1}^n Y(i), \quad n\geq 1.
\]
If there exists a constant $K\in [1,\infty)$  such that
\begin{equation} \label{eq.MnsqMnabsineq}
\mathbb{E}[Y(n)^2|\mathcal{F}(n-1)] \leq K \mathbb{E}[|Y(n)||\mathcal{F}(n-1)]^2, \quad \text{a.s. for all $n\geq 1$},
\end{equation}
then
\begin{multline}  \label{eq.Mlimitoroscillate}
\{\omega: \lim_{n\to\infty} M(n,\omega) \text{ exists and is finite} \}
\\\cup \{\omega: \liminf_{n\to\infty} M(n,\omega)=-\infty,\quad \limsup_{n\to\infty} M(n,\omega)=+\infty\} \text{ is an a.s. event}
\end{multline}
\end{lemma}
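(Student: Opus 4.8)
The plan is to split the probability space according to the predictable quadratic variation
$\langle M\rangle_n:=\sum_{i=1}^n \mathbb{E}[Y(i)^2\mid\mathcal{F}(i-1)]$, which is well defined and finite for each $n$ because \eqref{eq.MnsqMnabsineq} presupposes that the conditional second moments are a.s. finite; as $\langle M\rangle_n$ is nondecreasing, the events $\{\langle M\rangle_\infty<\infty\}$ and $\{\langle M\rangle_\infty=\infty\}$ partition $\Omega$. I would show that the convergence alternative of \eqref{eq.Mlimitoroscillate} holds on the first event and the oscillation alternative on the second.

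On $\{\langle M\rangle_\infty<\infty\}$ I would invoke the standard localised convergence theorem for locally square--integrable martingales. Setting $M(0):=0$, for each $m\in\mathbb{N}$ the process $M(\cdot\wedge\tau_m)$ with $\tau_m:=\inf\{n:\langle M\rangle_{n+1}>m\}$ (a stopping time, since $\langle M\rangle_{n+1}$ is $\mathcal{F}(n)$--measurable) satisfies $\mathbb{E}[M(n\wedge\tau_m)^2]=\mathbb{E}[\langle M\rangle_{n\wedge\tau_m}]\le m$, so it is bounded in $L^2$ and converges a.s. by Doob's theorem. Since $\{\langle M\rangle_\infty<m\}\subseteq\{\tau_m=\infty\}$, where $M$ agrees with the stopped process, letting $m\to\infty$ gives a.s. convergence of $M(n)$ to a finite limit on $\{\langle M\rangle_\infty<\infty\}$. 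This half uses only that $M$ is an $L^1$--martingale with a.s. finite conditional variances, and not the balance hypothesis \eqref{eq.MnsqMnabsineq}.

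The substance of the proof, and where \eqref{eq.MnsqMnabsineq} enters, is the oscillation alternative on $\{\langle M\rangle_\infty=\infty\}$. Because the hypothesis is unchanged under $M\mapsto -M$ (it involves only $Y^2$ and $|Y|$, and $\langle -M\rangle=\langle M\rangle$), it suffices to prove $\limsup_n M(n)=+\infty$ a.s. on $\{\langle M\rangle_\infty=\infty\}$; applying the same conclusion to $-M$ then yields $\liminf_n M(n)=-\infty$ on the same event, and the two together give the oscillation branch of \eqref{eq.Mlimitoroscillate}. The mechanism I would exploit is that \eqref{eq.MnsqMnabsineq} forbids the ``rare large jump'' behaviour by which a martingale with infinite quadratic variation could nonetheless converge or escape monotonically to $\pm\infty$. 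Quantitatively, writing $A_n:=\mathbb{E}[|Y(n)|\mid\mathcal{F}(n-1)]$ and combining \eqref{eq.MnsqMnabsineq} with a conditional Paley--Zygmund inequality applied to $Y(n)^+$ and to $Y(n)^-$ produces constants $\lambda,c>0$ depending only on $K$ with
\[
\mathbb{P}[\,Y(n)\ge \lambda A_n\mid\mathcal{F}(n-1)\,]\ge c,\qquad \mathbb{P}[\,Y(n)\le -\lambda A_n\mid\mathcal{F}(n-1)\,]\ge c\quad\text{a.s.},
\]
so that increments of either sign, of size comparable to their conditional $L^1$--scale, occur with conditionally non--negligible probability at every step.

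From here the route I would follow is that of Bramson, Questel and Rosenthal~\cite[Theorem 1.1]{bramsquestrosen:2004}: the two--sided mass bound above, fed into a conditional Borel--Cantelli argument (L\'evy's extension of the second Borel--Cantelli lemma) along the diverging scale furnished by $\langle M\rangle_\infty=\infty$, produces on that event infinitely many upward excursions exceeding any prescribed level, whence $\limsup_n M(n)=+\infty$ a.s. The main obstacle I anticipate is precisely this last step: the conditional scale $A_n$ may itself tend to zero, so individual increments need not be bounded below, and one must show that the accumulated fluctuations still reach arbitrarily high levels. This requires a careful stopping and truncation that isolates the steps on which a same--sign increment of the guaranteed size occurs while controlling $M$ in between; managing this coupling between the random scale $A_n$ and the divergence of $\langle M\rangle_\infty$ is the technical crux, and is exactly what the cited theorem of Bramson, Questel and Rosenthal is designed to supply.
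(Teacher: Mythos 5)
Your proposal should first be measured against what the paper actually does: the paper gives \emph{no} proof of this lemma at all --- it is imported wholesale from Bramson, Quastel and Rosenthal \cite[Theorem 1.1]{bramsquestrosen:2004}, so there is no internal argument to compare with. Judged on its own terms, the parts of your argument that you carry out are correct. The convergence half is complete and standard: $\tau_m=\inf\{n:\langle M\rangle_{n+1}>m\}$ is a stopping time by predictability of $\langle M\rangle$, the stopped martingale satisfies $\mathbb{E}[M(n\wedge\tau_m)^2]=\mathbb{E}[\langle M\rangle_{n\wedge\tau_m}]\le m$ (with $M(0):=0$), and $\{\langle M\rangle_\infty<m\}\subseteq\{\tau_m=\infty\}$ gives a.s.\ convergence on $\{\langle M\rangle_\infty<\infty\}$; you correctly observe this uses nothing of \eqref{eq.MnsqMnabsineq}. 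The symmetry reduction $M\mapsto -M$ is valid, and the conditional Paley--Zygmund step is sound: the martingale property gives $\mathbb{E}[Y(n)^{\pm}\,|\,\mathcal{F}(n-1)]=A_n/2$, while $\mathbb{E}[(Y(n)^{\pm})^2\,|\,\mathcal{F}(n-1)]\le K A_n^2$, so both one-sided bounds hold with constants depending only on $K$ (the event $\{A_n=0\}$ is trivial since there $Y(n)=0$ a.s.).

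The genuine gap is that the oscillation half --- $\limsup_n M(n)=+\infty$ a.s.\ on $\{\langle M\rangle_\infty=\infty\}$ --- is never proved: you sketch the mechanism and then state that the technical crux ``is exactly what the cited theorem of Bramson, Questel and Rosenthal is designed to supply.'' But the cited theorem \emph{is} the statement being proved, so as a standalone proof your argument is circular at precisely the step carrying all the difficulty. You are right to flag the obstruction yourself: L\'evy's conditional Borel--Cantelli plus the two-sided mass bounds yields infinitely many increments of size $\lambda A_n$, which proves nothing if $A_n\to 0$ summably along those times; converting the divergence of $\langle M\rangle$ into excursions reaching arbitrarily high levels requires the stopping/truncation construction of the BQR proof, which is absent here. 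One useful fact you could have extracted from \eqref{eq.MnsqMnabsineq} but did not is the two-sided comparability $A_n^2\le \mathbb{E}[Y(n)^2\,|\,\mathcal{F}(n-1)]\le K A_n^2$ (Jensen for the lower bound), whence $\{\langle M\rangle_\infty=\infty\}=\{\sum_n A_n^2=\infty\}$ a.s., which at least identifies the correct diverging scale for the excursion argument. In fairness, since the paper also takes the lemma purely on citation, your treatment matches the paper's in practical effect; but as a proof it is incomplete, and honesty requires saying that what you have established is the easy direction plus the correct reduction, not the theorem.
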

We now prove a consequence of Lemma~\ref{lemma.bramsquesrosen}.
\begin{lemma} \label{lemma.recurrentmartingale}
Suppose that $\xi$ obeys Assumption~\ref{ass.normal}.
Suppose that $Y^{(j)}=\{Y^{(j)}(n):n\geq 0\}$ for $j=1,\ldots,r$ are sequences of $\mathcal{F}^\xi(n)$--measurable random variables.
Define $M=\{M(n):n\geq 1\}$
\begin{equation} \label{eq.M}
M(n)=\sum_{i=1}^n \sum_{j=1}^r Y^{(j)}(i-1)\xi^{(j)}(i), \quad n\geq 1.
\end{equation}
Then $M$ obeys \eqref{eq.Mlimitoroscillate}.
\end{lemma}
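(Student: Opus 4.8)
The plan is to verify that the martingale $M$ satisfies the moment hypothesis \eqref{eq.MnsqMnabsineq} of Lemma~\ref{lemma.bramsquesrosen}, and then to invoke that lemma directly. Write the increment of $M$ as
\[
D(n) := M(n)-M(n-1) = \sum_{j=1}^r Y^{(j)}(n-1)\,\xi^{(j)}(n),
\]
which plays the role of the increment $Y(n)$ in Lemma~\ref{lemma.bramsquesrosen}. First I would confirm that $M$ is a martingale: since each $Y^{(j)}(n-1)$ is $\mathcal{F}(n-1)$--measurable and each $\xi^{(j)}(n)$ is independent of $\mathcal{F}(n-1)$ with zero mean by Assumption~\ref{ass.normal}, we obtain $\mathbb{E}[D(n)\,|\,\mathcal{F}(n-1)] = \sum_{j=1}^r Y^{(j)}(n-1)\,\mathbb{E}[\xi^{(j)}(n)] = 0$ almost surely.

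The key step is the conditional--distribution computation. Conditioning on $\mathcal{F}(n-1)$, I would treat the quantities $Y^{(j)}(n-1)$ as frozen constants; because the vector $\xi(n)$ is independent of $\mathcal{F}(n-1)$ and has mutually independent $N(0,1)$ components (Assumption~\ref{ass.normal}), the linear combination $D(n)$ is, conditionally on $\mathcal{F}(n-1)$, a centred Gaussian random variable with (random, $\mathcal{F}(n-1)$--measurable) variance
\[
s(n)^2 := \sum_{j=1}^r \bigl(Y^{(j)}(n-1)\bigr)^2.
\]
Consequently $\mathbb{E}[D(n)^2\,|\,\mathcal{F}(n-1)] = s(n)^2$ and $\mathbb{E}[|D(n)|\,|\,\mathcal{F}(n-1)] = s(n)\sqrt{2/\pi}$, using $\mathbb{E}|Z| = \sqrt{2/\pi}$ for a standard normal $Z$. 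On the event $\{s(n)>0\}$ this gives exactly
\[
\mathbb{E}[D(n)^2\,|\,\mathcal{F}(n-1)] = \frac{\pi}{2}\,\mathbb{E}[|D(n)|\,|\,\mathcal{F}(n-1)]^2,
\]
while on $\{s(n)=0\}$ both conditional moments vanish and the inequality holds trivially. Hence \eqref{eq.MnsqMnabsineq} holds almost surely with the universal constant $K = \pi/2 \in [1,\infty)$, crucially independent of the random scale $s(n)$. Applying Lemma~\ref{lemma.bramsquesrosen} then yields that $M$ obeys \eqref{eq.Mlimitoroscillate}, as required.

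The main obstacle, such as it is, lies in the conditional Gaussianity: one must exploit the independence of $\xi(n)$ from $\mathcal{F}(n-1)$ to freeze the coefficients $Y^{(j)}(n-1)$ and recognise that a linear combination of independent centred Gaussians is again a centred Gaussian, whose second moment is a fixed multiple ($\pi/2$) of the square of its first absolute moment regardless of its variance. A secondary point worth checking is that the conditional moments $s(n)^2$ and $s(n)\sqrt{2/\pi}$ are almost surely finite, which follows from the almost--sure finiteness of the $\mathcal{F}(n)$--measurable random variables $Y^{(j)}(n-1)$; this makes \eqref{eq.MnsqMnabsineq} meaningful even when the unconditional moments of the increments fail to be integrable.
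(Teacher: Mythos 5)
Your proof is correct and follows essentially the same route as the paper: verify the moment hypothesis \eqref{eq.MnsqMnabsineq} of Lemma~\ref{lemma.bramsquesrosen} by observing that, conditionally on $\mathcal{F}^\xi(n-1)$, the increment is centred Gaussian with variance $\sum_{j=1}^r Y^{(j)}(n-1)^2$, so the conditional second moment is a fixed multiple of the squared conditional first absolute moment. In fact your constant $K=\pi/2$ is the correct one, since $\mathbb{E}|Z|=c\sqrt{2/\pi}$ for $Z\sim N(0,c^2)$; the paper writes $\mathbb{E}[|Z|]^2=\frac{1}{2\pi}c^2$ and takes $K=2\pi$, a harmless slip because Lemma~\ref{lemma.bramsquesrosen} only requires some finite $K\geq 1$, and your explicit treatment of the degenerate case $s(n)=0$ is a small tidiness the paper omits.
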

\begin{proof}[Proof of Lemma~\ref{lemma.recurrentmartingale}]
Define
\[
Y(n)=\sum_{j=1}^r Y^{(j)}(n-1)\xi^{(j)}(n), \quad n\geq 1.
\]
Since $Y^{(j)}(n-1)$ is $\mathcal{F}^\xi(n-1)$ measurable, and $\xi$ obeys Assumption~\ref{ass.normal}, it follows that
\[
\mathbb{E}\left[Y(n)^2|\mathcal{F}^\xi(n-1)\right]= \sum_{j=1}^r Y^{(j)}(n-1)^2=:\varsigma^2(n).
\]
Next, we recall that if $Z$ is a normal random variable with mean zero and variance $c^2$, then
\[
\mathbb{E}[|Z|]^2=\frac{1}{2\pi} c^2.
\]
Since $\xi^{(j)}(n)$ for $j=1,\ldots,r$ are independent standard normal random variables, and $Y^{(j)}(n-1)$ is $\mathcal{F}^\xi(n-1)$ measurable,
it follows that, conditional on $\mathcal{F}^\xi(n-1)$, $Y(n)$ is normally distributed with zero mean and variance $\varsigma^2(n)$.
Therefore
\[
\mathbb{E}\left[|Y(n)||\mathcal{F}^\xi(n-1)\right]^2=\frac{1}{2\pi} \varsigma^2(n)
=\frac{1}{2\pi} \mathbb{E}\left[Y(n)^2|\mathcal{F}^\xi(n-1)\right],
\]
so \eqref{eq.MnsqMnabsineq} holds with $K=2\pi$. Therefore all the hypotheses of Lemma~\ref{lemma.bramsquesrosen} apply to $M$, and so we have
the claimed conclusion \eqref{eq.Mlimitoroscillate}.
\end{proof}

We employ one other result from the convergence theory of discrete process. It appears as Lemma 2
in \cite{JAARXM:2006a}.
\begin{lemma}\label{lemma.nonegdif}
 Let $\{Z(n)\}_{n\in \mathbb{N}}$ be a non-negative $\mathcal{F}(n)$-measurable
 process, $\mathbb{E}|Z(n)|<\infty$ for all $n\in \mathbb{N}$ and
\begin{equation}
 Z(n+1)\leq Z(n)+W(n)-V(n)+\nu(n+1), \quad n = 0, 1, 2, \dots,
\label{reprsemm}
\end{equation}
where $\{\nu(n)\}_{n\in \mathbb{N}}$ is an $\mathcal{F}(n)$-martingale--difference,
$\{W(n)\}_{n\in \mathbb{N}}$, $\{V(n)\}_{n\in \mathbb{N}}$ are  nonnegative
$\mathcal{F}(n)$--measurable processes, $\mathbb{E}|W(n)|<+\infty$, $\mathbb{E}|V(n)|<+\infty$ for all
$n\in \mathbb{N}$.   Then
$$\left\{\omega: \sum_{n=1}^{\infty} W(n)<+\infty\right\}\subseteq
\left\{\omega: \sum_{n=1}^{\infty} V(n)<+\infty\right\}\bigcap\{Z(n)\to\},$$
where $\{Z(n)\to\}$ denotes the set of all $\omega\in\Omega$
for which $\lim\limits_{n\to \infty} Z(n,\omega)$ exists and is
finite.
\end{lemma}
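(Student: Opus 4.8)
The plan is to recognise this as a localised Robbins--Siegmund-type statement and to reduce it to the convergence theorem for \emph{nonnegative} supermartingales, after a stopping-time truncation that confines attention to the event $\{\sum_n W(n)<\infty\}$. The only moment information available is the $L^1$-integrability of $Z$, $W$, $V$ (and hence of the martingale difference $\nu$), so the whole argument must be arranged to avoid any second-moment control on $\nu$.

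First I would absorb the two deterministic-looking drift terms into a single process. Define
\[
Q(n)=Z(n)+\sum_{k=0}^{n-1}V(k)-\sum_{k=0}^{n-1}W(k), \quad n\geq 0,
\]
with empty sums read as zero. Substituting \eqref{reprsemm} gives the pathwise bound $Q(n+1)\leq Q(n)+\nu(n+1)$; since each of $Z(n)$, $V(n)$, $W(n)$ is integrable, $Q(n)$ is integrable, and because $\nu$ is an $\mathcal{F}(n)$-martingale difference, conditioning yields $\mathbb{E}[Q(n+1)\mid\mathcal{F}(n)]\leq Q(n)$, so $Q$ is a supermartingale. The obstruction is that $Q$ need not be bounded below on all of $\Omega$, so the convergence theorem cannot be applied to $Q$ directly.

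To localise, for each $N\in\mathbb{N}$ I introduce the stopping time
\[
\tau_N=\inf\Big\{n\geq 0: \sum_{k=0}^{n}W(k)>N\Big\},
\]
which is a stopping time because $W$ is adapted, and which satisfies $\{\sum_n W(n)<\infty\}=\bigcup_{N}\{\tau_N=\infty\}$ since $\{\tau_N=\infty\}=\{\sum_k W(k)\leq N\}$. For $n\leq\tau_N$ the partial sum of $W$ is at most $N$, so using $Z\geq 0$ and $V\geq 0$ I obtain $Q(n\wedge\tau_N)\geq -N$. Hence $Q(n\wedge\tau_N)+N$ is a \emph{nonnegative} supermartingale, and by Doob's supermartingale convergence theorem it converges almost surely to a finite limit as $n\to\infty$; consequently $Q(n\wedge\tau_N)$ converges a.s. This is the decisive step, and it uses only the $L^1$ integrability already at hand, which is exactly why shifting by the constant $N$ to restore nonnegativity is the right manoeuvre.

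It then remains to read off the two conclusions on each $\{\tau_N=\infty\}$ and to take the union over $N$. From the supermartingale property, $\mathbb{E}[Q(n\wedge\tau_N)]\leq\mathbb{E}[Q(0)]=\mathbb{E}[Z(0)]$; combining this with the estimate $Q(n\wedge\tau_N)\geq\sum_{k=0}^{n\wedge\tau_N-1}V(k)-N$ and invoking monotone convergence gives $\mathbb{E}\big[\sum_{k=0}^{\tau_N-1}V(k)\big]\leq\mathbb{E}[Z(0)]+N<\infty$, so $\sum_k V(k)<\infty$ almost surely on $\{\tau_N=\infty\}$. On that same event $\sum_{k=0}^{n-1}W(k)$ converges (being bounded by $N$ with nonnegative increments) and $Q(n)$ converges, so from the identity $Z(n)=Q(n)+\sum_{k=0}^{n-1}W(k)-\sum_{k=0}^{n-1}V(k)$ and the summability of $V$ just established, $\lim_{n\to\infty}Z(n)$ exists and is finite. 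Taking the union over $N\in\mathbb{N}$ transports both properties from $\bigcup_N\{\tau_N=\infty\}$ to $\{\sum_n W(n)<\infty\}$, which is precisely the claimed inclusion. I expect the only genuinely delicate points to be the bookkeeping around the truncation (verifying $\{\sum W<\infty\}=\bigcup_N\{\tau_N=\infty\}$ and that the stopped partial sums stay below $N$) and confirming the integrability needed to declare $Q$ a supermartingale in the first place.
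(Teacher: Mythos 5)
Your proof is correct, but note that there is nothing in the paper to compare it against step by step: the paper does not prove this lemma at all, stating only that it ``appears as Lemma 2 in \cite{JAARXM:2006a}''. What you have supplied is a complete, self-contained proof, and it is precisely the classical Robbins--Siegmund localisation argument that underlies results of this type: absorbing the drift into $Q(n)=Z(n)+\sum_{k=0}^{n-1}V(k)-\sum_{k=0}^{n-1}W(k)$, verifying the supermartingale property from the martingale-difference hypothesis alone, localising with $\tau_N=\inf\{n:\sum_{k=0}^{n}W(k)>N\}$ so that $Q(n\wedge\tau_N)+N$ is a nonnegative supermartingale, extracting the a.s.\ summability of $V$ on $\{\tau_N=\infty\}$ via the expectation bound $\mathbb{E}[Q(n\wedge\tau_N)]\leq\mathbb{E}[Z(0)]$ together with monotone convergence, recovering the convergence of $Z(n)$ from the identity $Z(n)=Q(n)+\sum_{k=0}^{n-1}W(k)-\sum_{k=0}^{n-1}V(k)$, and finally taking the countable union over $N$. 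All the delicate points you flagged check out: $\{\tau_N=\infty\}=\{\sum_k W(k)\leq N\}$ gives $\bigcup_N\{\tau_N=\infty\}=\{\sum_n W(n)<\infty\}$, the stopped $W$-partial sums are indeed bounded by $N$ because the sum up to $(n\wedge\tau_N)-1$ involves only indices strictly below $\tau_N$, and the whole argument uses only first moments, as it must. Two minor remarks. First, the integrability of $\nu(n)$ is part of the definition of a martingale difference, not a consequence of the integrability of $Z$, $W$, $V$ --- since \eqref{reprsemm} is an inequality, $\nu$ cannot be recovered from the other processes --- but you only use $\mathbb{E}[\nu(n+1)\mid\mathcal{F}(n)]=0$ and monotonicity of conditional expectation, so nothing breaks. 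Second, your argument establishes the stated inclusion modulo a $\mathbb{P}$-null set, which is the standard (and the only possible) reading of such semimartingale convergence statements, since the supermartingale convergence theorem is itself an almost-sure assertion.
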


\subsection{Proof of Lemma~\ref{lemma.Uasy}} For $n\geq 1$, we have
$[U_h(n)]_i=\sqrt{h}\sum_{j=1}^r [\sigma_h(n-1)]_{ij}\xi_j(n+1)$.
Hence $[U_h(n)]_i$ is normally distributed with mean zero and
variance $\theta_i(n)^2:=h\sum_{j=1}^r [\sigma_h(n-1)]_{ij}^2$.
Therefore,
\begin{equation} \label{eq.Uniprob}
\mathbb{P}[|[U_h(n)]_i|\geq \epsilon]=1 -
\Phi\left(\frac{\epsilon}{\theta_i(n)}\right).
\end{equation}
Define $\theta^2(n)=\sum_{i=1}^d
\theta_i(n)^2=h\|\sigma_h(n-1)\|^2_F$. Since $\theta^2(n)\geq
\theta_i(n)^2$ for each $i=1,\ldots,d$, we have
\begin{equation*}
\sum_{i=1}^d
\left\{1-\Phi\left(\frac{\epsilon}{\theta_i(n)}\right)\right\}\leq d
\left(1-\Phi\left(\frac{\epsilon}{\theta(n)}\right)\right).
\end{equation*}
Suppose, for each $n$, that $Z_i(n)$ for $i=1,\ldots,d$ are
independent standard normal random variables. Define
$Z(n)=(Z_1(n),Z_2(n),\ldots,Z_d(n))$ and suppose that $(Z(n))_{n\geq
0}$ are a sequence of independent normal vectors. Define finally
\[
X_i(n)=\theta_i(n)Z_i(n), \quad X(n)=\sum_{i=1}^d X_i(n), \quad
n\geq 0.
\]
Then we have that $X_i$ is a zero mean normal with variance
$\theta_i^2$ and $X$ is a zero mean normal with variance $\theta^2$.
Define $Z^\ast(n)=X(n)/\theta(n)$ is a standard normal random
variable. Therefore we have that
\begin{equation} \label{eq.thth10}
\mathbb{P}[|X(n)|>\epsilon]=\mathbb{P}[|Z^\ast(n)|\geq
\epsilon/\theta(n)]=2\mathbb{P}[Z^\ast(n)\geq \epsilon/\theta(n)]
=2\left(1-\Phi\left(\frac{\epsilon}{\theta(n)}\right)\right).
\end{equation}
With $A_i(n)=\{ |X_i(n)|\leq \epsilon/d\}$, $B(n)=\{\sum_{i=1}^d
|X_i(n)|\leq \epsilon\}$, then $\cap_{i=1}^d A_i(n) \subseteq B(n)$,
so
\[
\mathbb{P}\left[|X(n)|>\epsilon\right]\leq
\mathbb{P}[\overline{B}(n)]\leq
\mathbb{P}\left[\overline{\cap_{i=1}^d A_i(n)}\right]
=\mathbb{P}\left[\cup_{i=1}^d \overline{A_i(n)}\right] \leq
\sum_{i=1}^d\mathbb{P}\left[\overline{A_i(n)}\right].
\]
Since $X_i=\theta_i Z_i$, we have \begin{equation} \label{eq.thth13}
\mathbb{P}\left[|X(n)|>\epsilon\right]\leq
2\sum_{i=1}^d\mathbb{P}\left[X_i(n)\geq \epsilon/d\right]
=2\sum_{i=1}^d \left\{ 1-\Phi\left(\frac{\epsilon/d}{\theta_i(n)}
\right) \right\}.
\end{equation}
By \eqref{eq.thth10} and \eqref{eq.thth13}, we get 
\begin{equation} \label{eq.phi2}
1-\Phi\left(\frac{\epsilon}{\theta(n)}\right) \leq \sum_{i=1}^d
\left\{ 1-\Phi\left(\frac{\epsilon/d}{\theta_i(n)} \right) \right\}.
\end{equation}
Define $\|U_h(n)\|_1=\sum_{i=1}^d |[U_h(n)]_i|$ for $n\geq 1$.
Therefore, as $\|U_h(n)\|_1\geq |[U_h(n)]_i|$, we have that
$\mathbb{P}[\|U_h(n)\|_1\geq \epsilon]\geq \mathbb{P}[|[U_h(n)]_i|\geq
\epsilon]$ for each $i=1,\ldots,d$. Therefore by \eqref{eq.Uniprob}
and \eqref{eq.phi2}, we have
\begin{equation} \label{eq.normUvsUi1}
d\mathbb{P}[\|U_h(n)\|_1\geq \epsilon]\geq \sum_{i=1}^d
\mathbb{P}[|[U_h(n)]_i|\geq \epsilon] =\sum_{i=1}^d \left\{1 -
\Phi\left(\frac{\epsilon}{\theta_i(n)}\right)\right\} \geq
1-\Phi\left(\frac{d\epsilon}{\theta(n)}\right).
\end{equation} On the other hand, defining
$A_i(j)=\{|[U_h(n)]_i|\leq \epsilon/d\}$ and $B(j)=\{\|U_h(n)\|_1\leq
\epsilon\}$, we see that $\cap_{i=1}^d A_i(n)\subseteq B(n)$. Then
\begin{multline*}
\mathbb{P}[\|U_h(n)\|_1\geq \epsilon]=\mathbb{P}[\overline{B(n)}]\leq
\mathbb{P}\left[\overline{\cap_{i=1}^d A_i(n)}\right]\\
=\mathbb{P}\left[\cup_{i=1}^d \overline{A_i(n)}\right] \leq
\sum_{i=1}^d \mathbb{P}\left[|[U_h(n)]_i|\geq \epsilon/d\right].
\end{multline*}
Hence by \eqref{eq.Uniprob} and \eqref{eq.phi2} we get
\begin{multline}   \label{eq.normUvsUi2}
\mathbb{P}[\|U_h(n)\|_1\geq \epsilon]\leq \sum_{i=1}^d
\mathbb{P}\left[|[U_h(n)]_i|\geq \epsilon/d\right] = \sum_{i=1}^d
\left\{1 - \Phi\left(\frac{\epsilon/d}{\theta_i(n)}\right)\right\}\\
\leq d \left(1-\Phi\left(\frac{\epsilon/d}{\theta(n)}\right)\right).
\end{multline}

Part (A). Suppose $S_h(\epsilon)<+\infty$ for all $\epsilon>0$. Then,
by \eqref{eq.normUvsUi2} we have that
\[
\mathbb{P}[\|U_h(n)\|_1\geq \epsilon]<+\infty
\]
and so by the Borel--Cantelli lemma, $\limsup_{n\to\infty}
\|U_h(n)\|_1 \leq \epsilon$ a.s. for each $\epsilon>0$. Letting
$\epsilon\downarrow 0$ through the rational numbers gives
$\lim_{n\to\infty} U_h(n)=0$ a.s.

Part (B). Suppose $S_h(\epsilon)=+\infty$ for all $\epsilon>0$. Then,
by \eqref{eq.normUvsUi1} we have that
\[
\mathbb{P}[\|U_h(n)\|_1\geq \epsilon]=+\infty
\]
Since $(\|U_h(n)\|_1)_{n\geq 1}$ is a sequence of independent random
variables, by the Borel--Cantelli lemma we have that
$\limsup_{n\to\infty} \|U_h(n)\|_1 \geq \epsilon$ a.s. for each
$\epsilon>0$. Letting $\epsilon\to \infty$ through the integers
gives $\limsup_{n\to\infty} \|U_h(n)\|=+\infty$ a.s.

Part (C). Suppose $S_h(\epsilon)<+\infty$ for all
$\epsilon>\epsilon'$. If $\epsilon>\epsilon'$, then by
\eqref{eq.normUvsUi2} we have
\begin{equation*}
\sum_{n=1}^\infty \mathbb{P}[\|U_h(n)\|_1\geq d h\epsilon]\leq
\sum_{n=0}^\infty d
\left(1-\Phi\left(\frac{\epsilon}{\|\sigma_h(n)\|_F}\right)\right)<+\infty,
\end{equation*}
and so $\limsup_{n\to\infty} \|U_h(n)\|_1\leq d h\epsilon'=:c_2$,
a.s. On the other hand, if $\epsilon<\epsilon'$, by
\eqref{eq.normUvsUi1} we get
\begin{equation*}
\sum_{n=1}^\infty \mathbb{P}[\|U_h(n)\|_1\geq h\epsilon/d]\geq
\sum_{n=0}^\infty \frac{1}{d} \left\{1-\Phi\left(\frac{\epsilon}{
\|\sigma_h(n)\|_F}\right)\right\}=+\infty.
\end{equation*}
Therefore, using the Borel--Cantelli lemma and independence of
$\|U_h(n)\|_1$, we have that $\limsup_{n\to\infty} \|U_h(n)\|_1\geq
h\epsilon'/d=:c_2$, a.s.

\section{Proof of  Theorem~\ref{thm.sig2l1Xto0}}
Recall from Lemma~\ref{lemma.repX2} that $X_h$ obeys \eqref{eq.X2sum}
with $M$ given by \eqref{eq.M2innerprod}.
Since $f$ obeys \eqref{eq.fglobalunperturbed}, this implies that
\[
\|X_h(n)^2\|-\|X_h(0)\|^2\leq \sum_{i=1}^{n}h\|\sigma_h(i-1)\xi(i)\|^2+M(n), \quad n\geq 1.
\]
We want to prove that $\limsup_{n\to \infty}\|X_h(n)\|<+\infty$, therefore we need to prove that
$\limsup_{n\to \infty}\sum_{i=1}^{n}h\|\sigma_h(i-1)\xi(i)\|^2<+\infty$ and $\limsup_{n\to \infty}M(n)<+\infty$.
Define $P(n)=\sum_{i=1}^{n}h\|\sigma_h(i-1)\xi(i)\|^2$. Since $(P(n))_{n\geq 1}$ is
a non--decreasing sequence, we have that $P_{\infty}=\lim_{n\to\infty} P(n)$ exists a.s.
We wish to show that $P_\infty$ must be finite a.s. Suppose to the contrary that there is an
event $A=\{\omega: P_{\infty}(\omega)=\infty\}$ with $\mathbb{P}[A]>0$.
Then as $P_\infty$ is a non--negative random variable, we have that
$\mathbb{E}[P_{\infty}]=+\infty$. However by Fubini's Theorem we have
\begin{align*}
\mathbb{E}[P_{\infty}]&=\mathbb{E}\sum_{i=1}^{\infty}\|\sigma_h(i-1)\xi(i)\|^2=
\sum_{i=1}^{\infty}\|\sigma_h(i-1)\|^2_F<+\infty,
\end{align*}
which is a contradiction.
Therefore it must be that $\lim_{n\to \infty}P(n)= P_{\infty}$ 
exists and is finite a.s.
From \eqref{eq.Xnp12Xn2} and \eqref{eq.fglobalunperturbed} we have
\begin{multline} \label{eq.delXn2}
\|X_h(n+1)\|^2-\|X_h(n)\|^2 
\leq  h\|\sigma_h(n)\xi(n+1)\|^2+2\sqrt{h}\langle X_h^\star(n),\sigma_h(n)\xi(n+1)\rangle.
\end{multline}
We know that $\mathbb{E}[\|X_h(0)\|^2]<+\infty$. We wish to prove that $\mathbb{E}[\|X_h(n)\|^2]<+\infty$ for
each $n\in\mathbb{N}$, which we prove by induction. Suppose that $\mathbb{E}[\|X_h(n)\|^2]<+\infty$.
Then, we get
\begin{multline*}
\mathbb{E}[\|X_h(n+1)\|^2]\leq
\mathbb{E}[\|X_h(n)\|^2]+\mathbb{E}[h\|\sigma_h(n)\xi(n+1)\|^2]
\\+2\sqrt{h}\mathbb{E}[\langle X_h^\star(n),\sigma_h(n)\xi(n+1)\rangle].
\end{multline*}
We now compute the second term on the right--hand side. Because $X_h^\star(n)$ depends on $X_h(n)$ and is $\mathcal{F}(n)$--measurable, and $\xi(n+1)$ is $\mathcal{F}(n+1)$--measurable and independent of $\mathcal{F}(n)$, therefore $\xi(n+1)$ is independent of $X_h^\star(n)$. Moreover $\mathbb{E}[\|X_h^\star(n)\|]\leq \mathbb{E}[\|X_h(n)\|]<\infty$ and similarly $\mathbb{E}[\|\xi(n+1)\|^2]$ is finite.
We get
\begin{align*}
\mathbb{E}[\langle X_h^\star(n),\sigma_h(n)\xi(n+1)\rangle]
&= \mathbb{E}\left[ \sum_{i=1}^d  [X_h^\star(n)]_i [\sigma_h(n)\xi(n+1)]_i\right]\\
&= \mathbb{E}\left[ \sum_{i=1}^d  [X_h^\star(n)]_i  \sum_{j=1}^r [\sigma_h(n)]_{ij}\xi_j(n+1) \right]\\
&= \mathbb{E}\left[ \sum_{j=1}^r \left(\sum_{i=1}^d  [X_h^\star(n)]_i [\sigma_h(n)]_{ij}\right)\xi_j(n+1) \right]\\
&=  \sum_{j=1}^r \mathbb{E}\left[\left(\sum_{i=1}^d  [X_h^\star(n)]_i [\sigma_h(n)]_{ij}\right)\xi_j(n+1) \right].
\end{align*}
Since $\mathbb{E}[\|X_h^\star(n)\|^2]<+\infty$ and $\mathbb{E}[\|\xi(n+1)\|^2]<+\infty$ and $\sigma_h$ is deterministic,
it follows from independence and the fact that $\mathbb{E}[\xi_j(n+1)]=0$ for all $n$ and $j$, that
\[
\mathbb{E}\left[\left(\sum_{i=1}^d  [X_h^\star(n)]_i [\sigma_h(n)]_{ij}\right)\xi_j(n+1) \right]
= \mathbb{E}\left[\sum_{i=1}^d  [X_h^\star(n)]_i [\sigma_h(n)]_{ij}\right]\mathbb{E}[\xi_j(n+1)]=0.
\]
Hence
\begin{equation*}
\mathbb{E}[\langle X_h^\star(n),\sigma_h(n)\xi(n+1)\rangle]=0.
\end{equation*}
Next, we return to $P(n)$ to get
\begin{multline*}
\mathbb{E}[\|\sigma_h(n)\xi(n+1)\|^2]
=\mathbb{E}\sum_{i=1}^d [\sigma_h(n)\xi_i(n+1)]_i^2
=\mathbb{E}\sum_{i=1}^d \left(\sum_{j=1}^r [\sigma_h(n)]_{ij}\xi_j(n+1)\right)^2\\
=\mathbb{E}\sum_{i=1}^d \left\{ \sum_{j=1}^r [\sigma_h(n)]_{ij}^2\xi_j^2(n+1)
+\sum_{j}\sum_{k\neq j} [\sigma_h(n)]_{ij} \sigma_{ik}(n) \xi_j(n+1)\xi_k(n+1)\right\}.
\end{multline*}
By the independence of $\xi_j(n+1)$, $\xi_i(n+1)$ for $i\neq j$, we have
\begin{equation} \label{eq.varsigxi}
\mathbb{E}[\|\sigma_h(n)\xi(n+1)\|^2]
=\sum_{i=1}^d \sum_{j=1}^r [\sigma_h(n)]_{ij}^2= \|\sigma_h(n)\|_F^2.
\end{equation}
Therefore
\[
\mathbb{E}[\|X_h(n+1)\|^2]\leq \mathbb{E}[\|X_h(n)\|^2]+h\|\sigma(n)\|_F^2<+\infty.
\]
Thus by induction we have $\mathbb{E}[\|X_h(n+1)\|^2]<+\infty$ for all $n\in\mathbb{N}$.
Now by \eqref{eq.delXn2} we get
\begin{align*}
\|X_h(n)\|^2-\|X_h(0)\|^2&=\sum_{j=0}^{n-1} \{\|X_h(j+1)\|^2-\|X_h(j)\|^2\}\\
&\leq h\sum_{j=0}^{n-1} \|\sigma_h(j)\xi(j+1)\|^2+2\sqrt{h}\sum_{j=0}^{n-1}\langle X_h^\star(j),\sigma_h(j)\xi(j+1)\rangle\\
&=hP(n)+2\sqrt{h}\sum_{j=0}^{n-1}\langle X_h^\star(j),\sigma_h(j)\xi(j+1)\rangle.
\end{align*}
Because $\mathbb{E}[\|X_h(n)\|^2]<+\infty$ and $\mathbb{E}[\|X_h^\star(n)\|^2]\leq\mathbb{E}[\|X_h(n)\|^2]$, thus $\mathbb{E}[\|X_h^\star(n)\|^2]<+\infty$ for all. Therefore
\[
M(n)=\sum_{j=0}^{n-1}2\sqrt{h}\langle X_h^\star(j),\sigma_h(j)\xi(j+1)\rangle
\]
is a martingale. Next we compute the quadratic variation of $M$. To this end, we may write $M$ according to
\begin{align*}
M(n) 
&=2\sqrt{h}\sum_{j=0}^{n-1}\sum_{l=1}^rQ_l(j)\xi_l(j+1),
\end{align*}
where $Q_l(j)=\sum_{i=1}^d [X_h^\star(j)]_i [\sigma_h(j)]_{il}$. Thus
$M(j+1)-M(j)=$$2\sqrt{h}\sum_{l=1}^rQ_l(j)\xi_l(j+1)$. Hence the quadratic variation of $M$ is given by
\begin{align*}
\langle M\rangle(n) 
&=4h\sum_{j=0}^{n-1}\mathbb{E}\left[\left(\sum_{l=1}^rQ_l(j)\xi_l(j+1)\right)^2\Bigg|\mathcal{F}_j\right]\\
&=4h\sum_{j=0}^{n-1}\mathbb{E}[\sum_{l=1}^rQ_l(j)^2\xi_l(j+1)^2\\
&\qquad+\sum_{m=1}^r \sum_{l\neq m}Q_l(j)Q_m(j)\xi_l(j+1)\xi_m(j+1)|\mathcal{F}_j]\\
&=4h\sum_{l=1}^rQ_l(j)^2\mathbb{E}[\xi_l(j+1)^2|\mathcal{F}_j]\\
&\qquad+\sum_{m=1}^r \sum_{l\neq m}Q_l(j)Q_m(j)\mathbb{E}[\xi_l(j+1)\xi_m(j+1)|\mathcal{F}_j]\\
&=4h\sum_{j=0}^{n-1}\sum_{l=1}^rQ_l^2(j).
\end{align*}
Therefore, by using the Cauchy--Schwartz inequality, we obtain the estimate
\begin{multline} \label{eq.estsqvM}
\langle M\rangle(n)
\leq
4h\sum_{j=0}^{n-1}\sum_{l=1}^r\left\{\sum_{i=1}^d X_h^\star(j)^2_i\sum_{i=1}^d\sigma_{il}^2(j)\right\}\\
=4h\sum_{j=0}^{n-1}\left(\sum_{i=1}^dX_i^\star(j)^2\right)\cdot \sum_{l=1}^r\sum_{i=1}^d\sigma_{il}^2(j)
=4h\sum_{j=0}^{n-1}\|X_h^\star(j)\|^2\|\sigma_h(j)\|^2_F.
\end{multline}
Define the events
\begin{gather*}
A_1=\{\omega: \lim_{n\to\infty} P(n,\omega) = P_{\infty}\in (0,\infty)\}, \quad
A_2=\{\omega:\lim_{n\to \infty}\langle M\rangle(n)=+\infty\}.
\end{gather*}
Suppose that $P[A_2]>0$. Let $A_3=A_1\cap A_2$,
so that $\mathbb{P}[A_3]>0$. Then a.s. on $A_3$ we have
\[
\lim_{n\to \infty}\frac{M(n)}{\langle M\rangle(n)}=0.
\]
Next suppose that $\epsilon \in (0,1)$ is so small that
\begin{equation}\label{eq.epsilonsmall}
4\epsilon h \sum_{n=1}^{\infty}\|\sigma_h(n)\|^2_F<\frac{1}{2}.
\end{equation}
Thus for every $\omega\in A_3$ and for every $\epsilon<1$, there is an $N(\omega,\epsilon)>1$ such that $|M(n,\omega)|\leq \epsilon \langle M \rangle(n,\omega)$ for all $n\geq N(\omega,\epsilon)$.
Therefore for $n\geq N(\omega,\epsilon)$ we have
\begin{align*}
\|X_h(n,\omega)\|^2&\leq \|X_h(0,\omega)\|^2+hP(n,\omega)+M(n,\omega)\\
&\leq \|X_h(0,\omega)\|^2+hP_{\infty}(\omega)+\epsilon\langle M\rangle(n,\omega).
\end{align*}
Since $\|X_h(n,\omega)\|^2\leq \max_{0\leq j\leq N(\omega,\epsilon)}\|X_h(j,\omega)\|^2=:X^{\star \star}_h(\epsilon,\omega)<+\infty$ for $0\leq n\leq N(\omega,\epsilon)$. Define $C_1(\epsilon,\omega):=\|X_h(0,\omega)\|^2+hP_{\infty}(\omega)+X_h^{\star \star}(\epsilon,\omega)$ which is finite. Therefore
\[
\|X_h(n,\omega)\|^2\leq C_1(\epsilon,\omega)+\epsilon \langle M\rangle(n,\omega), \quad n\geq 1.
\]
We drop the $\omega$--dependence temporarily. Define
$y(n)=\|\sigma_h(n)\|^2_F\|X_h(n)\|^2$ for $n\geq 0$. Hence by the last inequality and
\eqref{eq.estsqvM}, we have
\[
y(n)=\|\sigma_h(n)\|^2_F\|X_h(n)\|^2\leq C_1(\epsilon)\|\sigma_h(n)\|^2_F+4\epsilon h\|\sigma_h(n)\|^2_F\sum_{j=0}^{n-1}y(j), \quad n\geq 1,
\]
where we have used the fact that $\|X_h^\star(j)\|\leq \|X_h(j)\|^2$ for all $j\geq 0$.
Thus for $m\geq 1$ we have
\begin{align*}
\sum_{n=1}^my(n)&\leq C_1(\epsilon)\sum_{n=1}^m\|\sigma_h(n)\|^2_F
+4\epsilon h\sum_{n=1}^m\|\sigma_h(n)\|^2_F\sum_{j=0}^{n-1}y(j)\\
&\leq C_1(\epsilon)\sum_{n=1}^m\|\sigma_h(n)\|^2_F+4\epsilon h\sum_{n=1}^m\|\sigma_h(n)\|^2_F\sum_{j=0}^my(j)\\
&\leq C_1(\epsilon)\sum_{n=1}^{\infty}\|\sigma_h(n)\|^2_F+4\epsilon h
\sum_{n=1}^{\infty}\|\sigma_h(n)\|^2_F\left(\sum_{j=1}^my(j)+y(0)\right)\\
&\leq C(\epsilon)\sum_{n=1}^{\infty}\|\sigma_h(n)\|^2_F+\frac{1}{2}\sum_{j=1}^my(j)
\end{align*}
where $C(\epsilon)=C_1(\epsilon)+4\epsilon h\|X_h(0)\|^2\|\sigma_h(0)\|^2$, condition \eqref{eq.epsilonsmall} was used at the last step, and the non-negativity and definition of $y$ was used. Hence $\sum_{j=1}^my(j)\leq 2C(\epsilon)\sum_{n=1}^{\infty}\|\sigma_h(n)\|^2$ for all $m\geq 1$. Thus
\[
\sum_{n=1}^{\infty}\|\sigma_h(n)\|^2\|X_h(n,\omega)\|^2<+\infty \quad\text{for each $\omega \in A_3$}.
\]
This implies $\lim_{n\to \infty}\langle M\rangle(n,\omega)<+\infty$ for each $\omega\in A_3$, which is a contradiction.
Therefore we have that $\mathbb{P}[A_2]=0$. Thus we have that
\[
\lim_{n\to \infty}\langle M\rangle(n) \quad\text{exists and is a.s. finite}
\]
This implies $\lim_{n\to \infty}M(n)$ exists and is finite a.s., and so $\limsup_{n\to \infty}\|X_h(n)\|<+\infty$ a.s.

Next we show that $\lim_{n\to \infty}\|X_h(n)\|^2=:L\in [0,+\infty)$
a.s. To do this we apply Lemma~\ref{lemma.nonegdif} with
%
$Z(n+1):=\|X_h(n+1)\|^2$, $Z(n):=\|X_h(n)\|^2$, $V(n):=0$, $W(n):=h\|\sigma_h(n)\xi(n+1)\|^2$, $\nu(n+1):=2\sqrt{h}\langle X_h^\star(n),\sigma_h(n) \xi(n+1)\rangle$.
Therefore, by \eqref{eq.varsigxi} we get
 $\mathbb{E}[\sum_{n=1}^{\infty}W(n)]=\sum_{n=1}^{\infty}h\|\sigma_h(n)\|^2_F<+\infty$, which implies that
$\sum_{n=1}^{\infty}W(n)<+\infty$ a.s. Therefore,
$\lim_{n\to \infty}\|X_h(n)\|^2=:L\in [0,\infty)$ a.s. Moreover, as $W(n)\geq 0$ it also follows that $\lim_{n\to \infty}W(n)=0$ a.s.,  so $\lim_{n\to \infty} U_h(n)=0$ a.s.

We are now in a position to prove that $X_h(n)\to 0$ as $n\to\infty$ a.s.
Recall from \eqref{eq.Xhnp12Xhn2} and that $U_h(n)\to 0$ as $n\to \infty$. Since $\|X_h(n)\|\to \sqrt{L}$ as $n\to \infty$,
it follows that $\|X^\ast_h(n)\|=\|X_h(n+1)-U_h(n+1)\|\to \sqrt{L}$ as $n\to \infty $.
Hence $|\langle X^\ast_h(n),U_h(n+1)\rangle|\leq\|X^\ast_h(n)\|\|U_h(n+1)\|\to 0$ as $n\to \infty$.
Therefore, rearranging \eqref{eq.Xhnp12Xhn2} gives
\begin{multline*}
2h\langle f(X^\ast_h(n)),X^\ast_h(n)\rangle+h^2\|f(X_h^\ast(n)\|^2\\
=\|X_h(n)\|^2-\|X_h(n+1)\|^2+\|U_h(n+1)\|^2+2\langle X_h^\ast(n),U_h(n+1)\rangle
\end{multline*}
which goes to 0 as $n\to \infty$.
Thus $\lim_{n\to \infty}\left\{2\langle f(X^\ast_h(n)),X^\ast_h(n)\rangle+h\|f(X_h^\ast(n)\|^2\right\}=0$.
Next define $R:\mathbb{R}^d\to \mathbb{R}$ by
\begin{equation}\label{def.R}
R(x)=2\langle x,f(x)\rangle +h\|f(x)\|^2, \quad x\in \mathbb{R}^d.
\end{equation}
Then we have $R(0)=0$, $x\mapsto R(x)$ is continuous, $R(x)>0$ for all $x\neq 0$.
Therefore we have $\lim_{n\to \infty}R(X_h^\ast(n))=0$ and $\lim_{n\to \infty}\|X_h(n)\|=\sqrt{L}$.
Thus
\[
R(X^\ast_h(n))\geq \inf_{\|x\|=\|X^\ast_h(n)\|}R(x)\geq 0.
\]
Hence $0=\limsup_{n\to\infty}R(X^\ast_h(n))\geq \limsup_{n\to \infty}\inf_{\|x\|=\|X_h^\ast(n)\|}R(x)\geq 0$. Therefore
\[
\lim_{n\to\infty}\inf_{\|x\|=\|X^\ast_h(n)\|}R(x)=0.
\]
Now define $R^\ast:\mathbb{R}^+\to \mathbb{R}$ by $R^\ast(y)=\inf_{\|x\|=y}R(x)$. Since $R$ is continuous, $R^\ast$ is continuous. Thus,
because $\lim_{n\to \infty}R^\ast(\|X_h^\ast(n)\|)=0$ and $\|X^\ast_h(n)\|\to \sqrt{L}$ as $n\to \infty$, we have that
\[
0=\lim_{n\to\infty}R^\ast(\|X^\ast_h(n)\|)=R^\ast \left(\lim_{n\to\infty}\|X^\ast_h(n)\|\right)=R^\ast(\sqrt{L}).
\]
Thus $\inf_{\|x\|=\sqrt{L}}R(x)=0$. Since $R$ is continuous, there exists $X^\ast$ with $\|X^\ast\|=\sqrt{L}$ such that $R(x^\ast)=0$,
but since $R(0)=0$ and $R(x)>0$ for all $x\neq 0$, this forces $x^\ast=0$, so $L=0$. Hence,
$\lim_{n\to \infty}\|X_h(n)\|^2=0$, a.s., as required.

\section{Proof of Theorems~\ref{theorem.XunboundedXboundedbelow}}
We start by proving part (A). Suppose that $A:=\{\omega: \limsup_{n\to\infty}
\|X_h(n,\omega)\|<+\infty\}$ is an event with $\mathbb{P}[A]>0$.
Define for $\omega\in A$ the quantity $L(\omega)\in [0,\infty)$ such
that $L(\omega)=\limsup_{n\to\infty} \|X_h(n,\omega)\|$. By
Lemma~\ref{lem.Xastcontract}, we have $\|X_h^\star(n)\|\leq
\|X_h(n)\|$ for all $n\geq 0$. Therefore, for every $\omega\in A$,
we have $\limsup_{n\to\infty} \|X_h^\star(n,\omega)\|\leq
L(\omega)$. By \eqref{eq.SSupdate}, we have
$U_h(n+1,\omega)=X_h(n+1,\omega)-X_h^\star(n,\omega)$. Since
$S_h(\epsilon)=+\infty$ for every $\epsilon>0$, by
Lemma~\ref{lemma.Uasy} the process $U_h$ given by \eqref{def.U}
obeys $\limsup_{n\to\infty}\|U_h(n)\|=+\infty$ a.s. Suppose
$\Omega_4$ is the a.s. event such that
$\Omega_4=\{\omega:\limsup_{n\to\infty} \|U_h(n,\omega)\|=+\infty\}$.
Then $A_1=A\cap \Omega_4$ is an event with $\mathbb{P}[A_1]>0$.
Therefore for $\omega\in A_1$ we have
\begin{align*}
+\infty&=\limsup_{n\to\infty}\|U_h(n+1,\omega)\|=\limsup_{n\to\infty}\|X_h(n+1,\omega)-X_h^\star(n,\omega)\|\\
&\leq
\limsup_{n\to\infty}\|X_h(n+1,\omega)\|+\limsup_{n\to\infty}\|X_h^\star(n,\omega)\|\leq 2L(\omega),
\end{align*}
a contradiction. Therefore we have that $\mathbb{P}[A]=0$, which proves part (A).

%
For the proof of part (B), because $S_h(\epsilon)<+\infty$ for all $\epsilon>\epsilon'$ and $S_h(\epsilon)=+\infty$ for all $\epsilon<\epsilon'$, Lemma~\ref{lemma.Uasy} implies that the process $U_h$ defined by \eqref{def.U} obeys $0<c_1\leq \limsup_{n\to\infty}\|U_h(n)\|\leq c_2<+\infty$ a.s. for some deterministic $c_1$ and $c_2$. In fact
 \[
 U_h^\ast(\omega):=\limsup_{n\to\infty} \|U_h(n,\omega)\|\in [c_1,c_2].
 \]
 Therefore, we know that $\limsup_{n\to\infty} \|X_n(n,\omega)\|>0$ for all $\omega\in \Omega_1$ where $\Omega_1$ is
 an almost sure event.

Let $\omega\in \Omega_1$. We have that
\[
0<c'(\omega):=\limsup_{n\to\infty} \|X_h(n,\omega)\|.
\]
Clearly $c''(\omega):=\limsup_{n\to\infty} \|X_h^\star(n,\omega)\|\leq c'(\omega)$, where the latter inequality holds by Lemma~\ref{lem.Xastcontract}.
We have that $c''(\omega)>0$, because if $X_h^\star(n,\omega)\to 0$ as $n\to\infty$, and $f$ obeys \eqref{eq.fglobalunperturbed}, we have
\[
\lim_{n\to\infty} X_h(n,\omega)=\lim_{n\to\infty} X_h^\star(n,\omega)+f(X_h^\star(n,\omega))=0.
\]
%
By \eqref{eq.SSupdate}, since $c'(\omega)\geq c''(\omega)$, we get
\begin{align*}
U_h^\ast(\omega)&=\limsup_{n\to\infty}\|U_h(n+1,\omega)\|\leq \limsup_{n\to\infty}\|X_h(n+1,\omega)\|+\|X_h^\star(n,\omega)\|\\
&=c'(\omega)+c''(\omega) \leq 2c'(\omega).
\end{align*}
Therefore $c'(\omega)\geq U_h^\ast(\omega)/2\geq c_1/2$, as required.

\section{Proof of Theorems~\ref{thm:Xlim0}, \ref{thm:Xlim0faway0}, and \ref{theorem.xto0}}
\subsection{Properties of the data}
Before we turn to the proof of Theorem~\ref{thm:Xlim0} we first require some auxiliary results concerning the function $f$.
\begin{lemma}\label{lemma.Finverse}
Suppose that $f\in C(\mathbb{R}^d);\mathbb{R}^d)$. Suppose Assumption~\ref{ass.existl} holds.
If $K>0$ and $\|x\|>K>0$, then every solution $x^\star$ of \eqref{eq.splitimplicit} obeys $\|x^\star\|>F^{-1}_h(K)>0$, where
\begin{equation} \label{def.Fhmult}
F_h(x):=x+h\sup_{\|u\|\leq x}\|f(u)\|, \quad x\geq 0.
\end{equation}
\end{lemma}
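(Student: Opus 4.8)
The plan is to convert the implicit relation \eqref{eq.splitimplicit} into a scalar inequality that bounds $\|x\|$ from above by $F_h$ evaluated at $\|x^\star\|$, and then to invert this bound. Rewriting \eqref{eq.splitimplicit} as $x = x^\star + h f(x^\star)$ and applying the triangle inequality gives $\|x\| \le \|x^\star\| + h\|f(x^\star)\|$. Since $x^\star$ itself lies in the closed ball $\{u : \|u\| \le \|x^\star\|\}$, we have $\|f(x^\star)\| \le \sup_{\|u\| \le \|x^\star\|} \|f(u)\|$, and therefore, by the definition \eqref{def.Fhmult},
\[
\|x\| \le \|x^\star\| + h \sup_{\|u\| \le \|x^\star\|} \|f(u)\| = F_h(\|x^\star\|).
\]

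Next I would record the properties of $F_h$ needed to invert this inequality. Write $g(z) = \sup_{\|u\| \le z}\|f(u)\|$, so that $F_h(z) = z + h g(z)$. The map $g$ is non-decreasing, so $F_h$ is strictly increasing because of the leading linear term; moreover $F_h(z) \ge z \to \infty$ as $z \to \infty$, and under the standing hypothesis $f(0)=0$ from \eqref{eq.fglobalunperturbed} we have $F_h(0) = 0$. The one technical point is the continuity of $g$ (hence of $F_h$): since $f$ is continuous, $\|f\|$ is uniformly continuous on each compact ball, and the closed balls increase continuously with $z$; a short scaling argument, comparing $\|f(u)\|$ for $\|u\| \le z'$ with $\|f((z/z')u)\|$ where $(z/z')u$ lies in the smaller ball and $\|u - (z/z')u\| \le z' - z$, shows that $g$ has no jumps. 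Consequently $F_h : [0,\infty) \to [0,\infty)$ is a continuous, strictly increasing bijection with a strictly increasing inverse $F_h^{-1}$.

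Finally I would combine the two ingredients. From $\|x\| > K$ and the bound above, $F_h(\|x^\star\|) \ge \|x\| > K = F_h(F_h^{-1}(K))$; since $F_h$ is strictly increasing this forces $\|x^\star\| > F_h^{-1}(K)$. Positivity of the bound is immediate, since $K > 0 = F_h(0)$ gives $F_h^{-1}(K) > F_h^{-1}(0) = 0$. I expect the only genuine obstacle to be verifying the continuity of $g$; the remainder is just the triangle inequality and monotonicity. If one prefers to sidestep continuity altogether, one may instead define $F_h^{-1}(K) := \inf\{z \ge 0 : F_h(z) > K\}$ and run the same comparison against the strictly increasing $F_h$, but the continuity argument is clean and makes $F_h^{-1}$ a bona fide inverse.
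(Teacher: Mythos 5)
Your proposal is correct and follows essentially the same route as the paper's proof: rewrite \eqref{eq.splitimplicit} as $x = x^\star + hf(x^\star)$, apply the triangle inequality and the bound $\|f(x^\star)\|\leq \sup_{\|u\|\leq\|x^\star\|}\|f(u)\|$ to get $K < \|x\| \leq F_h(\|x^\star\|)$, and conclude by the monotonicity of $F_h$ and $F_h^{-1}$. Your extra verification that $F_h$ is continuous (hence a genuine bijection of $[0,\infty)$) is more careful than the paper, which simply asserts that $F_h$ is increasing and takes the existence of $F_h^{-1}$ for granted, but it does not change the argument.
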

\begin{proof}
Since $F_h:[0,\infty)\to[0,\infty)$ is increasing, $F_h^{-1}$ is increasing. Let $K>0$ and define  $M=F_h^{-1}(K)>0$. Since $\|x\|>K=F_h(M)$, and $x^\star$ obeys $x=x^\star +hf(x^\star)$, we get
\begin{align*}
K<\|x\|&=\|x^\star+hf(x^\star)\|\leq \|x^\star\|+h\|f(x^\star)\|\\
&\leq \|x^\star\|+h\sup_{\|u\|\leq \|x^\star\|}\|f(u)\|=F_h(\|x^\star\|).
\end{align*}
Thus $K<F_h(\|x^\star\|)$, therefore $F_h^{-1}(K)<\|x^\star\|$, as required.
\end{proof}

\begin{lemma} \label{lemma:barfphi}
Suppose that $f$ obeys \eqref{eq.fglobalunperturbed}. Define $\bar{f}:[0,\infty)\to \mathbb{R}$ by
\begin{equation} \label{def.barf}
\bar{f}(y):=\inf_{\|x\|=y}\langle x,f(x)\rangle,
\end{equation}
and $\varphi:[0,\infty)\to \mathbb{R}$ by
\[
\varphi(y)=\inf_{x\in[F_h^{-1}(\frac{3y}{4}),\frac{5y}{4}]}
\bar{f}(x).
\]
where $F_h$ is defined by \eqref{def.Fhmult}. Then $\bar{f}(x)>0$ for all $x>0$ and $\varphi(x)>0$
for all $x>0$.
\end{lemma}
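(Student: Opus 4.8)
The plan is to treat the two assertions separately, in each case reducing to the compactness of spheres and annuli in $\mathbb{R}^d$ together with the extreme value theorem, and then invoking the dissipativity in \eqref{eq.fglobalunperturbed}.

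First I would prove $\bar f(y)>0$ for $y>0$. Fix $y>0$. The map $x\mapsto\langle x,f(x)\rangle$ is continuous because $f\in C(\mathbb{R}^d;\mathbb{R}^d)$ by \eqref{eq.fglobalunperturbed}, and the sphere $\{x\in\mathbb{R}^d:\|x\|=y\}$ is nonempty and compact. Hence the infimum defining $\bar f(y)$ in \eqref{def.barf} is attained at some $x^\star$ with $\|x^\star\|=y$. Since $y>0$ we have $x^\star\neq 0$, so \eqref{eq.fglobalunperturbed} gives $\bar f(y)=\langle x^\star,f(x^\star)\rangle>0$.

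For the second assertion the key observation is that the nested infima defining $\varphi$ collapse to a single infimum over an annulus. Writing $I_y=[F_h^{-1}(3y/4),5y/4]$, by \eqref{def.barf} we have
\[
\varphi(y)=\inf_{x\in I_y}\inf_{\|z\|=x}\langle z,f(z)\rangle=\inf_{z\in A_y}\langle z,f(z)\rangle,\qquad A_y:=\{z\in\mathbb{R}^d:\|z\|\in I_y\}.
\]
It therefore suffices to show that $A_y$ is a nonempty compact set that avoids the origin: then continuity of $z\mapsto\langle z,f(z)\rangle$ forces the infimum to be attained at some $z^\star\in A_y$ with $\|z^\star\|>0$, whence $\varphi(y)=\langle z^\star,f(z^\star)\rangle>0$ by \eqref{eq.fglobalunperturbed}. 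To verify the properties of $A_y$ I would use the structure of $F_h$ in \eqref{def.Fhmult}. Since $f(0)=0$ we have $F_h(0)=0$, and $F_h$ is strictly increasing and continuous, so $F_h^{-1}$ is increasing with $F_h^{-1}(0)=0$; consequently the inner radius $F_h^{-1}(3y/4)$ is strictly positive for $y>0$, keeping $A_y$ away from the origin (this positivity is already recorded in Lemma~\ref{lemma.Finverse}). Because the supremum term in \eqref{def.Fhmult} is nonnegative we have $F_h(x)\geq x$, hence $F_h^{-1}(w)\leq w$ for all $w\geq 0$; applying this with $w=3y/4$ gives $F_h^{-1}(3y/4)\leq 3y/4\leq 5y/4$, so $I_y$, and therefore $A_y$, is nonempty. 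As a closed and bounded subset of $\mathbb{R}^d$, $A_y$ is compact.

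The step requiring the most care is the reduction of $\varphi$ to the single infimum over $A_y$ together with the confirmation that this annulus is simultaneously nonempty and bounded away from zero; once that is in place, positivity is an immediate consequence of compactness and the dissipativity hypothesis. I do not anticipate genuine difficulty elsewhere, though one should note in passing that the continuity of $F_h$ (needed for $F_h^{-1}$ to be well defined and continuous) itself rests on the continuity of $x\mapsto\sup_{\|u\|\leq x}\|f(u)\|$, which follows from the continuity of $f$ and the compactness of closed balls.
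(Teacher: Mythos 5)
Your proof is correct, and it organizes the argument differently from the paper's. The paper works in one dimension: it asserts that $\bar{f}$ is continuous (as a consequence of the continuity of $f$, without proof), notes $\bar{f}>0$ on $(0,\infty)$, and then argues by contradiction --- if $\varphi(y)=0$ for some $y>0$, attainment of the infimum of the continuous function $\bar{f}$ on the compact interval $[F_h^{-1}(3y/4),5y/4]$ would yield an $x\geq F_h^{-1}(3y/4)>0$ with $\bar{f}(x)=0$, contradicting the positivity of $\bar{f}$ on $(0,\infty)$. You instead collapse the nested infima into a single infimum of $z\mapsto\langle z,f(z)\rangle$ over the compact annulus $A_y$ and apply the extreme value theorem directly in $\mathbb{R}^d$, which is a genuine economy: you never need the continuity of $\bar{f}$, which is itself a small unproved lemma in the paper (the fact that $y\mapsto\inf_{\|x\|=y}\langle x,f(x)\rangle$ is continuous requires an argument, and the paper's route front-loads it because it is reused later, e.g.\ in the proof of Lemma~\ref{lemma:Xliminf0}). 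You also verify a point the paper passes over silently, namely that the interval $I_y$ is nonempty, via $F_h(x)\geq x$ and hence $F_h^{-1}(3y/4)\leq 3y/4<5y/4$. Both arguments ultimately rest on the same mechanism --- $F_h^{-1}(3y/4)>0$ (your appeal to $F_h(0)=0$ and strict monotonicity is sound, since $f(0)=0$ in \eqref{eq.fglobalunperturbed}) keeps the domain bounded away from the origin, where compactness and the dissipativity condition in \eqref{eq.fglobalunperturbed} combine to force a strictly positive minimum --- so the difference is one of decomposition rather than of idea, with your version the more self-contained for this lemma in isolation.
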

\begin{proof}
Since $f$ is continuous, it follows that $\bar{f}$ is continuous.
Also, as $F_h$ is continuous and invertible, $F_h^{-1}$ exists and
is continuous, and therefore $\varphi$ is continuous also. Notice
that the continuity of $f$ and the dissipative condition in
\eqref{eq.fglobalunperturbed} implies that $\bar{f}(y)>0$ for all
$y>0$. We show also that $\varphi(y)>0$ for $y>0$. Suppose to the
contrary that $\varphi(y)=0$ for some $y>0$. Then, as $\bar{f}$ is
continuous, there exists $x\in
[F_h^{-1}(\frac{3y}{4}),\frac{5y}{4}]$ such that $\bar{f}(x)=0$.
However, as $y>0$, we have that $F_h^{-1}(3y/4)>0$, and so this
implies that there is $x>0$ for which $\bar{f}(x)=0$.
\end{proof}
\subsection{Asymptotic results}
We are now ready to prove the first step of the main result of this section,
which is namely to establish that $\liminf_{n\to \infty}\|X_h(n)\|=0$.
\begin{lemma} \label{lemma:Xliminf0}
Suppose that $(X_h,X^\star_h)$ is a solution of \eqref{eq.splitstep}.
Suppose that $f$ obeys \eqref{eq.fglobalunperturbed}, 
and that the sequence $\xi$ obeys Assumption~\ref{ass.normal}. If $S_h(\epsilon)$ defined by \eqref{def.Seps}
obeys $S_h(\epsilon)<+\infty$ for all $\epsilon>0$, then
\[
\{\liminf_{n\to \infty}\|X_h(n)\|=0\}\cup \{\lim_{n\to\infty} \|X_h(n)\|=+\infty\} \text{ is an a.s. event}.
\]
\end{lemma}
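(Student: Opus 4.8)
The plan is to prove the dichotomy by contradiction, showing that its complementary event
\[
A := \Big\{0 < \liminf_{n\to\infty} \|X_h(n)\| < +\infty\Big\}
\]
is null; this is exactly the complement, since the stated event fails precisely when $\liminf_n\|X_h(n)\|$ is simultaneously strictly positive and finite. Because $S_h(\epsilon)<+\infty$ for every $\epsilon>0$, part (A) of Lemma~\ref{lemma.Uasy} gives $U_h(n)\to 0$ a.s., and I work throughout on this a.s.\ event. The engine is the one--step identity from \eqref{eq.Xnp12Xn2}: writing $U_h(n+1)=\sqrt{h}\sigma_h(n)\xi(n+1)$ and recalling $R$ from \eqref{def.R},
\[
\|X_h(n+1)\|^2 = \|X_h(n)\|^2 - hR(X_h^\star(n)) + \|U_h(n+1)\|^2 + 2\langle X_h^\star(n), U_h(n+1)\rangle.
\]
The deterministic ``descent'' $-hR(X_h^\star(n))$ is bounded below by a positive constant whenever $X_h^\star(n)$ is confined to a compact set away from the origin, since $R$ is continuous with $R(0)=0$ and $R(x)>0$ for $x\neq 0$.

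To exploit this I decompose $A$ over rational annuli. For $0<a<b$ put $\rho(a,b):=\inf_{F_h^{-1}(a)\leq \|x\|\leq b} R(x)$, with $F_h$ as in \eqref{def.Fhmult}; as $F_h^{-1}(a)>0$ this infimum is taken over a compact set bounded away from zero, so $\rho(a,b)>0$. Call $(a,b)\in\mathbb{Q}^2$ \emph{good} if $0<a<b$ and $b^2-a^2<\tfrac{h}{2}\rho(a,b)$. I claim $A\subseteq\bigcup_{(a,b)\,\text{good}}\{a<\liminf_n\|X_h(n)\|<b\}$: for $\omega\in A$ with $\ell:=\liminf_n\|X_h(n)\|\in(0,\infty)$, letting rationals $a\uparrow\ell$ and $b\downarrow\ell$ sends $b^2-a^2\to 0$ while the annuli $[F_h^{-1}(a),b]$ decrease to $[F_h^{-1}(\ell),\ell]$, which is nondegenerate and bounded away from $0$ (one has $F_h^{-1}(\ell)<\ell$ because $\sup_{\|u\|\leq \ell}\|f(u)\|>0$ for $\ell>0$ by \eqref{eq.fglobalunperturbed}). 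Hence $\rho(a,b)$ stays bounded below and the goodness inequality eventually holds. Thus if $\mathbb{P}[A]>0$, some good $(a,b)$ has $\mathbb{P}[A_{a,b}]>0$, where $A_{a,b}:=\{a<\liminf_n\|X_h(n)\|<b\}\cap\{U_h(n)\to 0\}$.

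I now derive a contradiction on $A_{a,b}$. Fix $\omega\in A_{a,b}$. Since $\liminf_n\|X_h(n)\|>a$ there is $N_1$ with $\|X_h(n)\|>a$ for all $n\geq N_1$; since $U_h(n)\to 0$ there is $N_2$ with $\|U_h(n)\|^2+2b\|U_h(n)\|<\tfrac{h}{2}\rho(a,b)$ for $n\geq N_2$; and since $\liminf_n\|X_h(n)\|<b$ there is $m\geq\max(N_1,N_2)$ with $\|X_h(m)\|<b$, whence $a<\|X_h(m)\|<b$. By Lemma~\ref{lemma.Finverse}, $\|X_h^\star(m)\|>F_h^{-1}(a)$, while Lemma~\ref{lem.Xastcontract} gives $\|X_h^\star(m)\|\leq\|X_h(m)\|<b$, so $R(X_h^\star(m))\geq\rho(a,b)$. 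Substituting into the identity and bounding the cross term by $2b\|U_h(m+1)\|$ yields
\[
\|X_h(m+1)\|^2 < b^2 - h\rho(a,b) + \tfrac{h}{2}\rho(a,b) = b^2 - \tfrac{h}{2}\rho(a,b) < a^2,
\]
the last step by goodness. Thus $\|X_h(m+1)\|<a$, contradicting $m+1\geq N_1$ and $\|X_h(m+1)\|>a$. Hence $\mathbb{P}[A_{a,b}]=0$ for every good pair, so $\mathbb{P}[A]=0$, as required.

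I expect the principal obstacle to be the covering step: one must absorb the randomness of $\ell=\liminf_n\|X_h(n)\|$ through the rational decomposition and verify that the limiting annulus $[F_h^{-1}(\ell),\ell]$ is genuinely nondegenerate and bounded away from the origin, so that $\rho(a,b)$ does not collapse to $0$ as $a,b\to\ell$. By contrast the one--step estimate is routine once $U_h(n)\to 0$ is available, because it is only ever evaluated at the return times $m$ where $\|X_h(m)\|<b$; the possibility that $\limsup_n\|X_h(n)\|=+\infty$ therefore never interferes with the argument.
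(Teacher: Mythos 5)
Your proof is correct, and while it runs on the same engine as the paper's --- contradiction from $\liminf_{n\to\infty}\|X_h(n)\|\in(0,\infty)$, the a.s.\ convergence $U_h(n)\to 0$ from Lemma~\ref{lemma.Uasy}(A), the exact one--step identity \eqref{eq.Xnp12Xn2}, and positivity of the dissipation on the annulus $\{F_h^{-1}(a)\le\|x\|\le b\}$ obtained from Lemmas~\ref{lemma.Finverse} and~\ref{lem.Xastcontract} --- the mechanism of the final contradiction is genuinely different. The paper fixes $\omega$, works with the random level $l(\omega)$ and the fixed--proportion annulus $(3l/4,\,5l/4)$, and proves by \emph{induction} that once $\|X_h(n,\omega)\|$ enters this annulus it is trapped there while satisfying $\|X_h(n+1,\omega)\|^2\le\|X_h(n,\omega)\|^2-h\varphi(l(\omega))$ at every step; summing this linear descent over $N$ steps then violates the lower bound. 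You instead make the annulus so thin, via the goodness condition $b^2-a^2<\tfrac{h}{2}\rho(a,b)$, that a \emph{single} step from a return time $m$ with $a<\|X_h(m)\|<b$ already forces $\|X_h(m+1)\|<a$, contradicting $\liminf_{n\to\infty}\|X_h(n)\|>a$ outright --- no trapping induction, no accumulated descent. Your supporting checks are sound: $\rho(a,b)>0$ because $0<F_h^{-1}(a)<a$ (using that \eqref{eq.fglobalunperturbed} forces $f(u)\neq 0$ for $u\neq 0$, so $F_h(a)>a$), the annuli are nested so $\rho$ is monotone and stays bounded below as $a\uparrow\ell$, $b\downarrow\ell$, and keeping the full $R$ of \eqref{def.R} rather than discarding the $-h^2\|f(X_h^\star(n))\|^2$ term (as the paper does) only strengthens the inequality. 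One remark on your closing worry: the countable covering by good rational pairs is correct but dispensable --- since your contradiction is pathwise, you could, exactly as the paper does, fix $\omega$ in the positive--probability event and choose $\omega$--dependent $a(\omega)<\ell(\omega)<b(\omega)$ satisfying goodness, concluding the event is empty; so the ``principal obstacle'' you anticipate is not a genuine one. What each route buys: yours is shorter and its one--step ``escape is impossible'' style in fact mirrors the paper's own proof of Theorem~\ref{thm:Xlim0}(i); the paper's trapping argument yields the extra quantitative information that the solution descends monotonically at a linear rate once near the liminf level.
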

\begin{proof}
Using \eqref{eq.Xnp12Xn2} together with \eqref{def.U} we get
\begin{multline}\label{eq.Xhnp12Xhn2}
\|X_h(n+1)\|^2=\|X_h(n)\|^2-2h\langle X_h^\star(n),f(X_h^\star(n))\rangle-h^2\|f(X_h^\star(n))\|^2
\\+2\langle X_h^\star(n),U_h(n+1)\rangle+\|U_h(n+1)\|^2,
\end{multline}
and therefore
\begin{multline} \label{eq.Xhnp12Xhn2norm}
\|X_h(n+1)\|^2\leq \|X_h(n)\|^2-2h\langle X_h^\star(n),f(X_h^\star(n))\rangle
\\+2\|X_h^\star(n)\|\|U_h(n+1)\|+\|U_h(n+1)\|^2.
\end{multline}
Suppose that $\Omega_5$ is the a.s. event such that $\Omega_5=\{\omega:\lim_{n\to\infty} \|U_h(n,\omega)\|=0\}$.
Clearly, we have that either the liminf of $\|X_h(n)\|$ is finite or not. Suppose that there exists
a nontrivial event $\Omega_6$ such that
\[
\Omega_6=\{\omega:\liminf_{n\to\infty}  \|X_h(n,\omega)\|<+\infty\}.
\]
In order to prove the result, it suffices to show that $\Omega_6$ is a.s. the same event as
$\{\omega:\liminf_{n\to\infty}  \|X_h(n,\omega)\|=+\infty\}$.

In order to do this, we suppose to the contrary that there exists an event $A=\{\omega\in \Omega_6: \liminf_{n\to \infty}\|X_h(n,\omega)\|=l(\omega)\in (0,\infty)\}$
for which $\mathbb{P}[A]>0$.  The finiteness of the liminf is a consequence of $A$ being a subset of $\Omega_6$. Let $A_1=A\cap \Omega_5$: then $\mathbb{P}[A_1]=\mathbb{P}[A]>0$. Fix $\omega\in A_1$. Suppose that $\liminf_{n\to \infty}\|X_h^\star(n,\omega)\|=0$.
Then, because $\|X_h^\star(n,\omega)\|\leq \|X_h(n,\omega)\|$ we have that $\liminf_{n\to \infty}\|X_h(n,\omega)\|=0$, a contradiction. Hence, for every $\omega\in A_1$ there exists $l^\star(\omega)>0$
such that $\liminf_{n\to \infty}\|X_h^\star(n,\omega)\|=l^\star(\omega)>0$.

Since $l(\omega)>0$, we note that $\varphi(l(\omega))>0$. Because for each $\omega\in A_1$ we have $U_h(n+1,\omega)\to 0$ as $n\to\infty$, it follows that for every $\omega\in A_1$
and for every
\[
\epsilon\in\left(0,1\wedge\frac{5l(\omega)}{2}\wedge h\frac{\varphi(l(\omega))}{5l(\omega)}\right),
\]
there is $N_1(\epsilon,\omega)\in \mathbb{N}$ such that
$\|U_h(n+1,\omega)\|<\epsilon$ for all $n>N_1(\epsilon,\omega)$. There also exists
$N_2(\omega) \in \mathbb{N}$ such that $\|X_h(n,\omega)\|>3l(\omega)/4$ for all
$n\geq N_2(\omega)$.

Now let
$N_3(\epsilon,\omega)=\max(N_1(\epsilon,\omega),N_2(\omega))$. By
the definition of the event $A\supseteq A_1$, it follows for each
$\omega\in A_1$ that there is a finite $N_4(\epsilon,\omega)$ such that
$N_4(\epsilon,\omega)=\inf \{n>N_3(\epsilon,\omega):
\|X_h(n,\omega)\|<5l(\omega)/4\}$.
Therefore $3l(\omega)/4<\|X_h(N_4,\omega)\|<5l(\omega)/4$.

We now show by induction that our supposition leads us to conclude that $3l(\omega)/4<\|X_h(n,\omega)\|<5l(\omega)/4$ for all $n\geq N_4(\epsilon,\omega)$.
This is certainly true for $n=N_4(\epsilon,\omega)$. Suppose that it is true for a
general $n\geq N_4(\epsilon,\omega)$. Clearly, as $n\geq N_4(\epsilon,\omega)>N_3(\epsilon,\omega)\geq N_2(\omega)$, we have $3l(\omega)/4<\|X_h(n+1,\omega)\|$, so it remains to establish the upper bound
$\|X_h(n+1,\omega)\|<5l(\omega)/4$.

Since $F_h$ is increasing, by using Lemmas~\ref{lem.Xastcontract} and
\ref{lemma.Finverse}, we get
\[
F_h^{-1}(3l(\omega)/4)<\|X_h^\star(n,\omega)\|\leq\|X_h(n,\omega)\|<\frac{5l(\omega)}{4}.
\]
Hence
\[
0<F_h^{-1}(3l(\omega)/4)<\|X_h^\star(n,\omega)\|<\frac{5l(\omega)}{4}.
\]
Since $\bar{f}$ is continuous, for all $y_2>y_1>0$, we have
\[
\inf_{y_1\leq \|x\|\leq y_2}\langle x,f(x)\rangle=\inf_{y\in[y_1,y_2]}\bar{f}(y)>0.
\]
Thus 
\[
\langle X_h^\star(n,\omega),f(X_h^\star(n,\omega))\rangle
\geq \min_{y\in[F_h^{-1}(3l(\omega)/4),5l(\omega)/4]}\bar{f}(y)=\varphi(l(\omega))>0.
\]

We now return to \eqref{eq.Xhnp12Xhn2norm} to estimate the terms on the righthand side.
For $\|X_h^\star(n,\omega)\|<5l(\omega)/4$, we have
\begin{multline*}
2\|X_h^\star(n,\omega)\|\|U_h(n+1,\omega)\|+\|U_h(n+1,\omega)\|^2\\
<2\frac{5l(\omega)}{4}\epsilon+\epsilon^2
<\frac{5l(\omega)}{2}\epsilon+\frac{5l(\omega)}{2}\epsilon
=5l(\omega)\epsilon.
\end{multline*}
Therefore
\begin{multline*}
-2h\langle X_h^\star(n,\omega),f(X_h^\star(n,\omega))\rangle
+2\|X_h^\star(N_4,\omega)\|\|U_h(n+1,\omega)\|+\|U_h(n+1,\omega)\|^2
\\
\leq -2h\varphi(l(\omega))+5l(\omega)\epsilon
<-2h\varphi(l(\omega))+5l(\omega)h\frac{\varphi(l(\omega))}{5l(\omega)} =-h\varphi(l(\omega)).
\end{multline*}
Therefore, by \eqref{eq.Xhnp12Xhn2norm}, we obtain $\|X_h(n+1,\omega)\|^2\leq
\|X_h(n,\omega)\|^2-h\varphi(l(\omega))$ and since by hypothesis we assume $\|X_h(n,\omega)\|<5l(\omega)/4$, we have
$\|X_h(n+1,\omega)\|<5l(\omega)/4$, as required. Moreover, scrutiny
of the above argument shows that one can equally prove that
\[
\|X_h(n+1,\omega)\|^2\leq \|X_h(n,\omega)\|^2-h\varphi(l(\omega)),
\quad \text{for all $n\geq N_4(\epsilon,\omega)$}.
\]
%
Therefore for any $N\in\mathbb{N}$ we have
\[
\|X_h(N_4+N,\omega)\|^2\leq \|X_h(N_4,\omega)\|^2-Nh\varphi(l(\omega)).
\]
In particular, let $N$ be any integer satisfying
\[
N>\frac{2}{h\varphi(l(\omega))}\left\{\left(\frac{5l(\omega)}{4}\right)^2-\left(\frac{l(\omega)}{4}\right)^2\right\}.
\]
Since $3l(\omega)/4<X_h(n,\omega)<5l(\omega)/4$ for all $n\geq N_4$, we get
\begin{align*}
\left(\frac{3l(\omega)}{4}\right)^2
\leq \|X_h(N_4+N,\omega)\|^2
&\leq \|X_h(N_4,\omega)\|^2-Nh\varphi(l(\omega))\\
&<\left(\frac{5l(\omega)}{4}\right)^2-Nh\varphi(l(\omega))\\
&<\left(\frac{l(\omega)}{4}\right)^2,
\end{align*}
which contradicts the original supposition. This proves the desired result.
\end{proof}
We are finally in a position to provide a proof of Theorem~\ref{thm:Xlim0}.
\subsection{Proof of Theorem~\ref{thm:Xlim0}}
To prove part (i), by virtue Lemma~\ref{lemma:Xliminf0}, it suffices to show on the event
$\Omega_7$  defined by $\Omega_7=\{\omega:\liminf_{n\to\infty}  \|X_h(n,\omega)\|=0\}$ (modulo some null event),
we have $X_h(n)\to 0$ as $n\to\infty$. We can assume, without loss of generality, that $\Omega_7$ is
an event of positive probability, because, if it is not, Lemma~\ref{lemma:Xliminf0} implies the event
$\{\lim_{n\to\infty} \|X_h(n)\|=+\infty\}$ is a.s., and our claim is trivially true.

Recall also the a.s. event $\Omega_5$ defined in Lemma~\ref{lemma:Xliminf0}, viz.,
\[
\Omega_5=\{\omega:\lim_{n\to\infty}  U_h(n,\omega)=0\}. \] By
Lemma~\ref{lemma:barfphi}, it follows that the function $\bar{f}$
defined in \eqref{def.barf} obeys $\bar{f}(y)>0$ for all $y>0$ and
by the continuity of $f$, $\bar{f}$ is also continuous on
$[0,\infty)$. Therefore, for any $l>0$ we have that
\begin{equation} \label{eq.Xto0barfineq}
\min_{\frac{l}{32}\leq y\leq \frac{l}{16}}\bar{f}(y)>0.
\end{equation}
Hence, we may choose an $\epsilon=\epsilon(l)>0$ so small that
\begin{equation} \label{eq.epsXto0}
2\epsilon(l)=1\wedge \frac{l}{32}   \wedge \left\{\frac{32}{10l}2h\min_{\frac{l}{32}\leq y\leq \frac{l}{16}}\bar{f}(y)\right\}.
\end{equation}
Let $\omega\in \Omega_8:=\Omega_5\cap \Omega_7$. Therefore, there exists $N_1(l,\omega)\in\mathbb{N}$ such that
$\|U_h(n+1,\omega)\|<\epsilon(l)$ for all $n>N_1(l,\omega)$. Moreover, as
$\liminf_{n\to\infty}  \|X_h(n,\omega)\|=0$, it follows that there exists an integer $N_2=N_2(l,\omega)>N_1(l,\omega)$
such that $\|X_h(N_2,\omega)\|<l/16$.


Suppose that there exists an integer $N_3>N_2$ such that $\|X_h(n,\omega)\|<l/16$ for $n=N_2,N_2+1,\ldots,N_3$, but $\|X_h(N_3+1,\omega)\|\geq l/16$. By \eqref{eq.SSupdate} and \eqref{def.U} we have
$X_h(N_3+1,\omega)=X_h^\star(N_3,\omega)+U_h(N_3+1,\omega)$, and since $N_3>N_1$, we obtain
\begin{align*}
\|X_h^\star(N_3,\omega)\|
&\geq \|X_h(N_3+1,\omega)\|-\|U_h(N_3+1,\omega)\|>\frac{l}{16}-\epsilon>\frac{l}{32},
\end{align*}
where \eqref{eq.epsXto0} is used at the last step.
Now using Lemma~\ref{lem.Xastcontract}, we get $\|X_h^\star(N_3)\|\leq \|X_h(N_3)\|<l/16$, and so
$l/32<\|X_h^\star(N_3)\|<l/16$. Therefore by the definition of $\bar{f}$, we have
\[
\langle X_h^\star(N_3),f(X_h^\star(N_3))\rangle\geq \min_{\frac{l}{32}\leq y\leq \frac{l}{16}}\bar{f}(y)>0,
\]
where the last inequality is a consequence of \eqref{eq.Xto0barfineq}.

We now insert these estimates into \eqref{eq.Xhnp12Xhn2norm} to get
\begin{align*}
\|X_h(N_3+1,\omega)\|^2&\leq \|X_h(N_3,\omega)\|^2-2h\langle X_h^\star(N_3,\omega),f(X_h^\star(N_3,\omega))\rangle\\
&\qquad\qquad+2\|X_h^\star(N_3,\omega)\|\|U_h(N_3+1,\omega)\|+\|U_h(N_3+1,\omega)\|^2\\
&\leq  \|X_h(N_3,\omega)\|^2-2h\langle X_h^\star(N_3,\omega),f(X_h^\star(N_3,\omega))\rangle\\
&\qquad\qquad
+2\|X_h^\star(N_3,\omega)\|\epsilon(l)+\epsilon(l)^2\\
&\leq \left(\frac{l}{16}\right)^2-2h\min_{\frac{l}{32}\leq y\leq \frac{l}{16}}\bar{f}(y)+2\frac{l}{16}\epsilon(l)+\epsilon(l)^2\\
&< \left(\frac{l}{16}\right)^2-2h\min_{\frac{l}{32}\leq y\leq \frac{l}{16}}\bar{f}(y)+2\frac{l}{16}\epsilon(l)+\epsilon(l)\frac{l}{32}\\
&=\left(\frac{l}{16}\right)^2-2h\min_{\frac{l}{32}\leq y\leq \frac{l}{16}}\bar{f}(y)
+\frac{5l}{32}\epsilon(l)\\
&<\left(\frac{l}{16}\right)^2,
\end{align*}
where once again \eqref{eq.Xto0barfineq} is used at the last step, and \eqref{eq.epsXto0} has been used throughout.
Therefore, by hypothesis we have
\[
\left(\frac{l}{16}\right)^2\leq \|X_h(N_3+1,\omega)\|^2 < \left(\frac{l}{16}\right)^2,
\]
%
%
%
a contradiction. Therefore, it must follow for each $\omega\in \Omega_8$ that for every $l>0$ there exists
an integer $N_2=N_2(l,\omega)$ such that $\|X_h(n,\omega)\|< l/16$ for all $n\geq N_2(l,\omega)$. Therefore,
we have that $X_h(n,\omega)\to 0$ as $n\to\infty$ for all $\omega\in \Omega_8$, and as $\Omega_8$ is a.s.,
the first part of the result has been proven.

To prove part (ii), define $A=\{\omega:\lim_{n\to\infty} X(n,\omega)=0\}$. Then $\mathbb{P}[A]>0$ by hypothesis. By Lemma~\ref{lem.Xastcontract}, we have that
$\|X_h^\star(n)\|\leq \|X_h(n)\|$ for all $n\geq 0$. Therefore, for $\omega\in A$, we have $X_h^\star(n,\omega)\to 0$ as $n\to\infty$.
By \eqref{eq.SSupdate}, we have that
\[
\lim_{n\to\infty} U_h(n+1,\omega)=\lim_{n\to\infty} \left\{X_h(n+1,\omega)-X_h^\star(n,\omega)\right\}=0.
\]
Therefore $U_h(n)\to 0$ on a set of positive probability. By
Lemma~\ref{lemma.Uasy}, it follows that $S_h(\epsilon)<+\infty$ for all $\epsilon>0$.

\subsection{Proof of Theorem~\ref{thm:Xlim0faway0}}
Scrutiny of Theorem~\ref{thm:Xlim0} shows that we can establish Theorem~\ref{thm:Xlim0faway0} provided that
the condition \eqref{eq.xfxunifpos} together with $S_h(\epsilon)$ always being finite implies
$\liminf_{n\to\infty} \|X_h(n)\|<+\infty$ a.s. This is the subject of the next result. 
\begin{lemma}  \label{lemma.Xliminffinite}
Suppose that $(X_h,X^\star_h)$ is a solution of
\eqref{eq.splitstep}. Suppose that $f$ obeys
\eqref{eq.fglobalunperturbed} and \eqref{eq.xfxunifpos} and that the
sequence $\xi$ obeys Assumption~\ref{ass.normal}. If $S_h(\epsilon)$
defined by \eqref{def.Seps} obeys $S_h(\epsilon)<+\infty$ for every
$\epsilon>0$, then
\[
\liminf_{n\to\infty} \|X_h(n)\|<+\infty, \quad \text{a.s.}
\]
\end{lemma}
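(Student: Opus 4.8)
The plan is to argue by contradiction, ruling out the ``escape to infinity'' event by combining the representation for $\|X_h(n)\|^2$ with the martingale trichotomy of Lemma~\ref{lemma.recurrentmartingale}. Suppose, to the contrary, that the event $A=\{\omega:\lim_{n\to\infty}\|X_h(n,\omega)\|=+\infty\}$ has positive probability; since $\liminf_{n\to\infty}\|X_h(n)\|=+\infty$ is equivalent to $\lim_{n\to\infty}\|X_h(n)\|=+\infty$, it suffices to prove $\mathbb{P}[A]=0$. Summing the identity \eqref{eq.Xhnp12Xhn2} from $0$ to $n-1$ and writing $M(n)=\sum_{j=0}^{n-1}2\langle X_h^\star(j),U_h(j+1)\rangle$ for the noise cross--term (which is exactly the martingale in \eqref{eq.M2innerprod}), I would record
\[
M(n)=\|X_h(n)\|^2-\|X_h(0)\|^2+G(n)-P(n),
\]
where $G(n)=\sum_{j=0}^{n-1}\{2h\langle X_h^\star(j),f(X_h^\star(j))\rangle+h^2\|f(X_h^\star(j))\|^2\}\geq 0$ is nondecreasing by \eqref{eq.fglobalunperturbed}, and $P(n)=\sum_{j=0}^{n-1}\|U_h(j+1)\|^2\geq 0$.

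The next step is to show that on $A$ the martingale $M$ is forced to diverge to $+\infty$. First, since $S_h(\epsilon)<+\infty$ for all $\epsilon>0$, Lemma~\ref{lemma.Uasy}(A) gives $U_h(n)\to 0$ a.s., so $\|U_h(n)\|^2\to 0$ a.s.; a Ces\`aro averaging argument then yields $P(n)=o(n)$ on an almost sure event. Second, on $A$ we have $\|X_h(n)\|\to+\infty$, and Lemma~\ref{lemma.Finverse} (noting $F_h^{-1}(K)\to+\infty$ as $K\to+\infty$) --- or simply the continuity of $f$ applied to $X_h(n)=X_h^\star(n)+hf(X_h^\star(n))$ --- forces $\|X_h^\star(n)\|\to+\infty$. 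Hypothesis \eqref{eq.xfxunifpos} then ensures that for $\omega\in A$ there is $N(\omega)$ with $\langle X_h^\star(n),f(X_h^\star(n))\rangle\geq \phi/2$ for all $n\geq N(\omega)$, whence $G(n)\geq h\phi\,(n-N(\omega))$ for $n\geq N(\omega)$. Combining these with $\|X_h(n)\|^2\geq 0$ in the displayed identity gives, on $A$ intersected with the almost sure events above,
\[
M(n)\geq h\phi\,(n-N(\omega))-P(n)-\|X_h(0)\|^2\longrightarrow +\infty,
\]
since $P(n)=o(n)$; in fact $\liminf_{n\to\infty}M(n)/n\geq h\phi>0$.

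Finally, I would invoke the trichotomy. The process $M$ is precisely of the form \eqref{eq.M} with $Y^{(j)}(n)=2\sqrt{h}\sum_k[X_h^\star(n)]_k[\sigma_h(n)]_{kj}$, which is $\mathcal{F}^\xi(n)$--measurable (and square--integrable, as in the proof of Theorem~\ref{thm.sig2l1Xto0}), so Lemma~\ref{lemma.recurrentmartingale} applies and shows that, almost surely, either $\lim_{n\to\infty}M(n)$ exists finitely or $\liminf_{n\to\infty}M(n)=-\infty$. In particular $\{\omega:\lim_{n\to\infty}M(n,\omega)=+\infty\}$ is a null event. But the previous paragraph shows $A\subseteq\{M(n)\to+\infty\}$ up to a null set, so $\mathbb{P}[A]=0$, as required. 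I expect the main obstacle to be the control of the noise contribution along the escape event: because the factor $\|X_h^\star(j)\|$ in the increments of $M$ blows up on $A$, no naive pathwise bound on $M$ is available, so the argument must lean on the \emph{qualitative} dichotomy of Lemma~\ref{lemma.recurrentmartingale} (rather than any quantitative growth estimate) to exclude $M(n)\to+\infty$, while the reversion condition \eqref{eq.xfxunifpos} supplies the linear drift in $G$ that dominates the sublinear $P(n)=o(n)$.
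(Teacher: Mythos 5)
Your proposal is correct and follows essentially the same route as the paper: argue by contradiction on the escape event, use the representation of $\|X_h(n)\|^2$ from Lemma~\ref{lemma.repX2} so that \eqref{eq.xfxunifpos} produces a linear negative drift dominating the noise term (the paper absorbs $\frac{1}{h}\|U_h(i)\|^2$ pointwise via the eventual bound $\|U_h(n)\|^2<h\phi/4$, while you use the equivalent Ces\`aro estimate $P(n)=o(n)$), and then exclude $M(n)\to+\infty$ via the trichotomy of Lemma~\ref{lemma.recurrentmartingale} (the paper phrases this as a case split over the events $\Omega_l$ and $\Omega_\infty$, which is the same thing). Your explicit justification that $\|X_h^\star(n)\|\to\infty$ on the escape event, via continuity of $f$ in $X_h(n)=X_h^\star(n)+hf(X_h^\star(n))$, fills a step the paper leaves implicit.
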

\begin{proof}
Suppose to the contrary that
\[
A=\{\omega: \liminf_{n\to\infty} \|X(n,\omega)\|=+\infty\} \]
is an
event with $\mathbb{P}[A]>0$. Since $\Omega_1=\{\omega:
U_h(n,\omega)\to 0 \text{ as } n\to\infty\}$ is an a.s. event, we
have that $A_1=A\cap \Omega_1$ obeys $\mathbb{P}[A_1]>0$. Therefore
by \eqref{eq.xfxunifpos} for each $\omega\in A_1$, there is an
$N(\omega)\in \mathbb{N}$ such that
\[
\langle X_h^\star(n-1,\omega),f(X_h^\star(n-1,\omega))\rangle\geq \frac{\phi}{2}, \quad n\geq N_1(\omega).
\]
On the other hand, as $U_h(n,\omega)\to 0$ as $n\to\infty$ for each $\omega\in A_1$, we have that there is $N_2(\omega)$ such that
\[
\|U_h(n,\omega)\|^2<h\frac{\phi}{4}, \quad n\geq N_2(\omega).
\]
Suppose $N_3(\omega)=\max(N_1(\omega),N_2(\omega))$. Then
by Lemma~\ref{lemma.repX2}, we have that $\|X_h\|^2$ obeys
\begin{multline*} 
\|X_h(n)\|^2=\|X_h(N_3)\|^2 - \sum_{i=N_3+1}^n h\left\{2\langle f(X_h^\star(i-1)), X_h^\star(i-1)\rangle - \frac{1}{h}\|U_h(i)\|^2 \right\}
\\- \sum_{i=N_3+1}^n h^2\|f(X_h^\star(i-1))\|^2+ M(n)-M(N_3), \quad n\geq N_3+1.
\end{multline*}
Since for $n\geq N_3(\omega)$ we have
\[
2\langle X_h^\star(n-1,\omega),f(X_h^\star(n-1,\omega))\rangle -\frac{1}{h}\|U_h(n,\omega)\|^2  > \frac{3\phi}{4},
\]
we get
\begin{multline} \label{eq.X2sum2}
\|X_h(n,\omega)\|^2\leq \|X_h(N_3(\omega),\omega)\|^2 - \frac{3\phi h}{4}(n-N_3(\omega))+ M(n,\omega)-M(N_3(\omega),\omega), \\
 n\geq N_3(\omega)+1, \quad \omega\in A_1.
\end{multline}
Now, recall that $M$ is defined by \eqref{def.M2} where $Y^{(j)}$ is given by \eqref{def.Yi} for $j=1,\ldots,r$. Notice by  \eqref{def.Yi} that $Y^{(j)}(n)$ is an $\mathcal{F}^\xi(n)$--measurable random variable. Since $\xi$ obeys Assumption~\ref{ass.normal}, it follows that
all the conditions of Lemma~\ref{lemma.recurrentmartingale} hold, and that the martingale $M$ is in the form of \eqref{eq.M} in
Lemma~\ref{lemma.recurrentmartingale}. Therefore, it follows that $M$ obeys \eqref{eq.Mlimitoroscillate}, so that, if we define
\[
\Omega_l=\{\omega:\lim_{n\to\infty} M(n,\omega)\text{ exists and is finite}\}
 \]
 and
 \[
 \Omega_\infty=\{\omega:\liminf_{n\to\infty} M(n,\omega)=-\infty, \quad \limsup_{n\to\infty} M(n,\omega)=+\infty\}
 \]
 then $\Omega_l\cup \Omega_\infty=:\Omega_2$ is an a.s. event. 
Since $\Omega_2$ is a.s., it follows that  either (or both) of $A_2:=A_1\cap \Omega_l$ and $A_3:=A_1\cap \Omega_\infty$ are events of positive probability.

Suppose that $\mathbb{P}[A_2]>0$. Then, for each $\omega\in A_2$ we have that $M(n,\omega)$ has a finite limit (say $L(\omega)$) as $n\to\infty$, and that $\|X_h(n,\omega)\|\to\infty$ as $n\to\infty$. Taking the liminf as $n\to\infty$ on both sides of \eqref{eq.X2sum2} gives
\begin{align*} 
+\infty &=
\liminf_{n\to\infty}\|X_h(n,\omega)\|^2\\
&\leq \|X_h(N_3(\omega),\omega)\|^2 -M(N_3(\omega),\omega)+
\liminf_{n\to\infty} \left\{ - \frac{3\phi h}{4}(n-N_3(\omega))+ M(n,\omega)\right\}\\
&=-\infty,
\end{align*}
a contradiction. Therefore, we have $\mathbb{P}[A_2]=0$.

Suppose now that $\mathbb{P}[A_3]>0$. Then, for each $\omega\in A_3$ it follows from the definition of $A_3$ that $\liminf_{n\to\infty}M(n,\omega)=-\infty$, and that $|X_h(n,\omega)|\to\infty$ as $n\to\infty$. Taking the liminf as $n\to\infty$ on both sides of \eqref{eq.X2sum2} gives
\begin{align*} 
+\infty &=
\liminf_{n\to\infty}\|X_h(n,\omega)\|^2\\
&\leq \|X_h(N_3(\omega),\omega)\|^2 -M(N_3(\omega),\omega)+
\liminf_{n\to\infty} \left\{ - \frac{3\phi h}{4}(n-N_3(\omega))+ M(n,\omega)\right\}\\
&=-\infty,
\end{align*}
a contradiction. Therefore, we have $\mathbb{P}[A_3]=0$. Therefore, we have that $0=\mathbb{P}[A_2\cup A_3]=\mathbb{P}[A_1\cap \Omega_2]>0$,
a contradiction. Hence $\mathbb{P}[A_1]=0$, and so $\mathbb{P}[A]=0$, which proves the result.
\end{proof}

\subsection{Proof of Theorem~\ref{theorem.xto0}}
To prove this, we first consider the case when $\sigma_h\in \ell^2(\mathbb{N})$. In this case, Theorem~\ref{thm.sig2l1Xto0} implies that 
$\lim_{n\to \infty}X_h(n)=0$, a.s. Therefore, we concentrate next on the case when $\sigma_h\notin \ell^2$. 
An important step to achieve this is to prove the following lemma. 
\begin{lemma} \label{lemma.liminf0discrete}
Suppose that $f$ obeys \eqref{eq.fglobalunperturbed} and $(X_h,X^*_h)$ is a solution of \eqref{eq.splitstep}. Suppose also that 
$\sigma_h \notin l^2(\mathbb{N})$. Then
\[
\liminf_{n\to \infty}X_h(n)\leq 0\leq\limsup_{n\to \infty}X_h(n), \quad \text{a.s.}
\]
\end{lemma}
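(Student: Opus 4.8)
The plan is to argue by contradiction, exploiting the sign--preserving nature of the deterministic half--step together with the two--sided oscillation of the random walk generated by the noise increments. First I would record the key scalar consequence of \eqref{eq.fglobalunperturbed}: since $xf(x)>0$ for $x\neq 0$, the function $f$ has the same sign as its argument. Rewriting \eqref{eq.SSXast} as $X_h(n)=X_h^\star(n)+hf(X_h^\star(n))$ then shows that $X_h(n)$ and $X_h^\star(n)$ always share the same sign, and $X_h^\star(n)=0$ if and only if $X_h(n)=0$, in agreement with Lemma~\ref{lem.Xastcontract}. Writing $U_h(n+1)=\sqrt{h}\sigma_h(n)\xi(n+1)$ as in \eqref{def.U}, the update \eqref{eq.SSupdate} reads $X_h(n+1)=X_h(n)-hf(X_h^\star(n))+U_h(n+1)$, from which the two one--sided bounds follow: whenever $X_h(n)>0$ we have $f(X_h^\star(n))>0$, so $X_h(n+1)\leq X_h(n)+U_h(n+1)$; and whenever $X_h(n)<0$ we have $f(X_h^\star(n))<0$, so $X_h(n+1)\geq X_h(n)+U_h(n+1)$.

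The second ingredient is the oscillation of the martingale $M(n)=\sum_{i=1}^n U_h(i)=\sum_{i=1}^n \sqrt{h}\sigma_h(i-1)\xi(i)$. This is exactly of the form covered by Lemma~\ref{lemma.recurrentmartingale}, so $M$ obeys the dichotomy \eqref{eq.Mlimitoroscillate}: either $M(n)$ converges to a finite limit, or $\liminf_{n}M(n)=-\infty$ and $\limsup_{n}M(n)=+\infty$ a.s. To eliminate the convergent alternative I would use that the increments $U_h(i)$ are independent, mean--zero and Gaussian with $\sum_{i}\Var(U_h(i))=h\sum_{n}|\sigma_h(n)|^2=+\infty$, since $\sigma_h\notin\ell^2(\mathbb{N})$; by the classical criterion for the convergence of sums of independent random variables (Kolmogorov), this divergence of the variance sum forces $M(n)$ to fail to converge a.s. Hence $M$ must oscillate, with $\limsup_{n}M(n)=+\infty$ and $\liminf_{n}M(n)=-\infty$ a.s.

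Finally I would combine the two. To show $\limsup_{n}X_h(n)\geq 0$ a.s., suppose instead that $A=\{\limsup_{n}X_h(n)<0\}$ has positive probability. On $A$ there is a (random) index $N$ with $X_h(n)<0$ for all $n\geq N$; the lower bound above, summed from $N$, gives $X_h(m)\geq X_h(N)-M(N)+M(m)$ for $m\geq N$. Taking the limsup over $m$ and using $\limsup_{m}M(m)=+\infty$ forces $\limsup_{m}X_h(m)=+\infty$ on $A$, contradicting the definition of $A$; hence $\mathbb{P}[A]=0$. The bound $\liminf_{n}X_h(n)\leq 0$ is proved symmetrically: assuming $\liminf_{n}X_h(n)>0$ on a set of positive probability gives eventual positivity, whence $X_h(m)\leq X_h(N)-M(N)+M(m)$, and $\liminf_{m}M(m)=-\infty$ produces the contradiction $\liminf_{m}X_h(m)=-\infty$. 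The main obstacle is the second step: correctly ruling out convergence of $M$, so that the non--summability hypothesis $\sigma_h\notin\ell^2$ is genuinely converted into two--sided oscillation of the driving random walk. Once that is in hand, the monotone drift pushes supplied by the first step are comparatively routine.
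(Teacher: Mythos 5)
Your proposal is correct and follows essentially the same route as the paper: argue by contradiction, use the sign condition $xf(x)>0$ to make the drift term $-hf(X_h^\star(n))$ one--signed once $X_h$ is eventually sign--definite, drop it, and let the two--sided oscillation of the Gaussian random walk $M_h(n)=\sum_{j=0}^{n-1}\sqrt{h}\sigma_h(j)\xi(j+1)$ (forced by $\sigma_h\notin\ell^2$) produce the contradiction. Your only deviation is cosmetic and in fact an improvement in rigour: you justify $\liminf_n M_h(n)=-\infty$ and $\limsup_n M_h(n)=+\infty$ explicitly via the dichotomy of Lemma~\ref{lemma.recurrentmartingale} combined with Kolmogorov's non--convergence criterion, a step the paper merely asserts, and you obtain eventual positivity of $f(X_h^\star(n))$ by a pointwise sign--sharing observation where the paper runs a subsequence argument to show $\liminf_n X_h^\star(n)>0$.
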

\begin{proof}
Suppose $\liminf_{n\to\infty}X_h(n)>0$ with positive probability. Then there exists an event $A$ with $\mathbb{P}[A]>0$, such that 
\[
A=\{\omega:\liminf_{n\to\infty}X_h(n,\omega)=\underline{X}(\omega)>0\}.
\]
For $\omega\in A$, set $\underline{X}(\omega):=\liminf_{n\to\infty}X_h(n,\omega)>0$. Suppose $\liminf_{n\to\infty} X^\star_h(n,\omega)=0$, so that 
there exists a sequence $(n_j(\omega))_{j=1}^\infty$ such that $n_j(\omega)\uparrow\infty$ as $j\to\infty$ such that $\lim_{j\to\infty} X^\star_h(n_j(\omega),\omega)=0$. Therefore, as $X_h(n,\omega)=X_h^\star(n,\omega)+hf(X_h^\star(n,\omega))$, we have that 
\begin{multline*}
0<\underline{X}(\omega)=\liminf_{n\to\infty} X(n,\omega)\leq 
\lim_{j\to\infty} X_h(n_j(\omega),\omega)\\
=\lim_{j\to\infty}\left\{X_h^\star(n_j(\omega),\omega)+hf(X_h^\star(n_j(\omega),\omega))\right\}=0,
\end{multline*}
a contradiction. Hence for each $\omega\in A$ we have that 
\[
\liminf_{n\to\infty}X^\star_h(n,\omega)=:\underline{X}^*(\omega)>0.
\]
Therefore, for each $\omega\in A$, there is $N^*(\omega)\in\mathbb{N}$ such that
$X_h(n,\omega)\geq \underline{X}(\omega)/2$ and $X^\star_h(n,\omega)\geq \underline{X}^*(\omega)/2$ for all $n\geq N^*(\omega)$. 
Let $n\geq N^\ast(\omega)$. Since 
\[
X_h(n+1)=X^\star_h(n)+\sqrt{h}\sigma_h(n)\xi(n+1) =X_h(n)-hf(X^\star_h(n))+\sqrt{h}\sigma_h(n)\xi(n+1),
\]
we have 
\begin{align*}
X_h(n+1,\omega)
&=X_h(N^*(\omega),\omega)-h\sum^n_{j=N^*(\omega)}f((X^*_h(j,\omega))+\sum^n_{j=0}\sqrt{h}\sigma_h(j)\xi(j+1,\omega)\\
&\qquad-\sum^{N^*(\omega)-1}_{j=0}\sqrt{h}\sigma_h(j)\xi(j+1,\omega)\\
&\leq X_h(N^*(\omega),\omega)-\sum^{N^*(\omega)-1}_{j=0}\sqrt{h}\sigma_h(j)\xi(j+1,\omega)+M_h(n+1),
\end{align*}
where we have defined the martingale $M_h$ by 
\[
M_h(n+1)=\sum_{j=0}^n \sqrt{h}\sigma_h(j)\xi(j+1), \quad n\geq 0.
\]
Since $\sigma_h\notin \ell^2(\mathbb{N})$, we have that for a.a. $\omega\in A$,
$\liminf_{n\to \infty}M_h(n+1,\omega)=-\infty$. Therefore, we have 
\[
0<\liminf_{n\to\infty} X_h(n+1,\omega)\leq -\infty \quad\text{for a.a. $\omega\in A$},
\]
a contradiction. Therefore $\mathbb{P}[A]=0$, so $\liminf_{n\to\infty} X_h(n)\leq 0$, a.s. 
One can proceed analogously to prove that $\limsup_{n\to\infty} X_h(n)\geq 0$ a.s.
\end{proof}

\begin{proof}[Proof of Theorem~\ref{theorem.xto0}]
 Define 
\[
A_1=\{\omega:\lim_{n\to\infty}|X_h(n,\omega)|=+\infty \},\quad A_0=\{\omega:\lim_{n\to \infty}X_h(n,\omega)=0\}.
\]
Note that Theorem~\ref{thm:Xlim0} and the hypothesis $S_h(\epsilon)<+\infty$ implies that $\Omega^*=A_1\cup A_0$ is an a.s. event. 
Suppose $A_1$ is an event with positive probability. 
Let 
\[
\Omega_1=\{\omega: \liminf_{n\to\infty}X_h(n,\omega)\leq 0,\quad \limsup_{n\to\infty}X_h(n,\omega)\}\geq 0\}
\] 
and $\Omega_2=\{\omega:\lim_{n\to\infty}\sqrt{h}\sigma_h(n)\xi(n+1,\omega)=0\}$. 
By Lemma~\ref{lemma.liminf0discrete}, $\Omega_1$ is an a.s. event, and $S_h(\epsilon)<+\infty$ for all $\epsilon>0$ implies that $\Omega_2$ is 
an a.s. event. 
Define $A_2=A_1\cup\Omega_1\cup\Omega_2$. Then $\mathbb{P}[A_2]=\mathbb{P}[A_1]>0$. 

Next, let $\epsilon\in (0,1/2)$. Then for every $\omega\in A_2$, there exists an $N_0(\omega,\epsilon)$ such that for all $n\geq N_0(\omega,\epsilon)$ 
we have $|\sqrt{h}\sigma_h(n)\xi(n+1,\omega)|<\epsilon$ and $|X_n(n,\omega)|>1/\epsilon$. 
Since $\lim_{n\to\infty} |X_h(n,\omega)|=+\infty$, $\liminf_{n\to\infty} X_h(n,\omega)\leq 0$ and $\limsup_{n\to\infty} X(n,\omega)\geq 0$, we must have 
\[
\liminf_{n\to\infty}X_h(n,\omega)=-\infty,\quad \limsup_{n\to\infty}X_h(n,\omega)=+\infty.
\]
Therefore as $\lim_{n\to\infty}|X_h(n,\omega)|=+\infty$, it follows that there exists $N^*(\omega,\epsilon)>N_0(\omega,\epsilon)$ such that  
\begin{equation*}
X_h(N^*(\omega,\epsilon),\omega)<-\frac{1}{\epsilon},\quad X_h(N^*(\omega,\epsilon)+1,\omega)>\frac{1}{\epsilon}.
\end{equation*}
Therefore
\begin{align*}
\frac{1}{\epsilon}<X_h(N^*(\omega,\epsilon)+1,\omega)&=X_h^*(N(\omega,\epsilon),\omega)+\sqrt{h}\sigma_h(n)\xi(N(\omega,\epsilon),\omega)\\
&\leq X^*_h(N(\omega,\epsilon),\omega)+\epsilon.
\end{align*}
Finally, because $X_h(N^*(\omega,\epsilon),\omega)<-1/\epsilon<0$, we have that
$X_h(N^*(\omega,\epsilon),\omega)\leq X^\star_h(N^*(\omega,\epsilon),\omega)\leq 0$. 
Therefore 
\[
\frac{1}{\epsilon}\leq X^\star_h(N(\omega,\epsilon),\omega)+\epsilon\leq \epsilon.
\]
Hence $\epsilon^2\geq 1$. But $\epsilon\in(0,1/2)$, which is a contradiction. Therefore $\mathbb{P}[A_1]=0$ and so as $A_0$ and $A_1$ 
are disjoint events we have 
\[
1=\mathbb{P}[\Omega^*]=\mathbb{P}[A_1\cup A_0]=\mathbb{P}[A_1]+\mathbb{P}[A_0]=\mathbb{P}[A_0].
\]
Thus $A_0=\{\omega:\lim_{n\to\infty}X_h(n,\omega)=0\}$ is an a.s. event, which finishes the proof.
\end{proof}

\section{Proof of Theorem~\ref{lemma.Xboundedabove}}
\subsection{Proof of parts (C), (A), and limsup in part (B)}
Part (C) of the Theorem follows from part (A) of Theorem~\ref{theorem.XunboundedXboundedbelow}.
Part (A) is a consequence of Theorem~\ref{thm:Xlim0faway0}, because the condition \eqref{eq.xfxunifpos} on $f$ is implied by
\eqref{eq.fboundedbelow}. The lower bound in part (B) is a consequence of part (B) of Theorem~\ref{theorem.XunboundedXboundedbelow}.
Hence the result holds if we can establish the upper bound in part (B).

To do this, notice first by part (B) of Lemma~\ref{lemma.Uasy} that there exists an a.s. event $\Omega_1$ given by $\Omega_1=\{\omega:\limsup_{n\to\infty} \|U_h(n,\omega)\|_1\leq c_2\}$, where $c_2$ is given by \eqref{eq.Ubounds}. Therefore, there is a deterministic $B_0>c_2$ such that for each
$\omega\in \Omega_1$ there is an $N=N(\omega)\in \mathbb{N}$ such that $\|U_h(n+1,\omega)\|_2\leq\|U_h(n+1,\omega)\|_1\leq  B_0$
for all $n\geq N$. Since $f$ obeys \eqref{eq.fboundedbelow}, we may define
\[
M(B_0)=\sup\{y>0: \inf_{\|x\|_2\geq y} \frac{\langle x,f(x)\rangle}{\|x\|_2}\leq \frac{2B_0}{h}\}.
\]
Define $C(B_0)=B_0+M(B_0)$. Now suppose that $\|X_h(n,\omega)\|_2>C(B_0)$ for all $n\geq N(\omega)$.
Let $n\geq N(\omega)$. By \eqref{eq.SSupdate} and \eqref{def.U}, we have
$\|X_h^\star(n,\omega)\|_2\geq \|X_h(n+1,\omega)\|_2-\|U_h(n+1,\omega)\|_2\geq C(B_0)-B_0=M(B_0)$.
Hence by the definition of $M(B_0)$ we have
\[
\frac{\langle X_h^\star(n,\omega),f(X_h^\star(n,\omega))\rangle}{\|X_h^\star(n,\omega)\|_2}\geq \frac{2B_0}{h}.
\]
Therefore by  \eqref{eq.SSXast} we get
\begin{align*}
\langle X^\star_h(n,\omega),X_h(n,\omega)\rangle&=\|X_h^\star(n,\omega)\|^2_2+h\langle f(X_h^\star(n,\omega)),X_h^\star(n,\omega)\rangle\\
&\geq \|X_h^\star(n,\omega)\|^2_2+h\frac{2B_0}{h}\|X_h^\star(n,\omega)\|_2.
\end{align*}
By the Cauchy--Schwartz inequality,
\[
\|X_h^\star(n,\omega)\|_2\|X_h(n,\omega)\|_2\geq \|X_h^\star(n,\omega)\|^2_2+2B_0\|X_h^\star(n,\omega)\|_2.
 \]
Since $\|X_h^\star(n,\omega)\|>0$, we have $\|X_h(n,\omega)\|_2\geq \|X_h^\star(n,\omega)\|_2+2B_0$, or
 $ \|X_h^\star(n,\omega)\|_2\leq \|X_h(n,\omega)\|_2-2B_0$. Therefore, for $n\geq N$ by \eqref{eq.SSupdate} we have
\[
\|X_h(n+1,\omega)\|_2 
\leq \|X_h^\star(n,\omega)\|+B_0
\leq \|X_h(n,\omega)\|_2-B_0.
\]
Therefore, we have
\[
C(B_0)\leq \|X_h(N+n,\omega)\|_2\leq \|X_h(N,\omega)\|_2-B_0n, \quad n\geq 0,
\]
which is a contradiction. Thus, there exists $N_1=N_1(\omega)\geq N(\omega)$ such that $\|X_h(N_1)\|_2\leq C(B_0)$.

We prove by induction that $\|X_h(n)\|_2\leq C(B_0)$ for all $n\geq N_1$. Suppose that this is true at level $n$.
Suppose that $\|X_h^\star(n,\omega)\|>C(B_0)-B_0$. Now by \eqref{eq.SSXast} we get
\begin{align*}
\langle X^\star_h(n,\omega),X_h(n,\omega)\rangle&=\|X_h^\star(n,\omega)\|^2+h\langle f(X_h^\star(n,\omega)),X_h^\star(n,\omega)\rangle\\
&\geq \|X_h^\star(n,\omega)\|^2+h\frac{2B_0}{h}\|X_h^\star(n,\omega)\|.
\end{align*}
By the Cauchy--Schwartz inequality,
\[
\|X_h^\star(n,\omega)\|_2\|X_h(n,\omega)\|_2\geq \|X_h^\star(n,\omega)\|^2_2+2B_0\|X_h^\star(n,\omega)\|_2.
 \]
Since $\|X_h^\star(n,\omega)\|>0$, we have $\|X_h(n,\omega)\|_2\geq \|X_h^\star(n,\omega)\|_2+2B_0>C(B_0)+B_0$. But
$C(B_0)\geq \|X_h(n,\omega)\|_2>C(B_0)+B_0$, a contradiction. Hence $\|X_h^\star(n,\omega)\|\leq C(B_0)-B_0$. Therefore by
\eqref{eq.SSupdate}, we have
\[
\|X_h(n+1,\omega)\|_2\leq \|X_h^\star(n,\omega)\|_2 + B_0\leq C(B_0),
\]
which proves the claim at level $n+1$. Therefore we have $\|X_h(n,\omega)\|_2\leq C(B_0)$
for all $n\geq N_1(\omega)$ and all $\omega\in \Omega_1$, which is an a.s. event.
Hence $\limsup_{n\to\infty}\|X_h(n,\omega)\|_2\leq C(B_0)$ for each $\omega\in \Omega_1$.
Therefore, we have   $\limsup_{n\to\infty}\|X_h(n)\|_2\leq c_4$ a.s., where $c_4:=C(B_0)$ is deterministic.

\subsection{Proof of liminf in part (B)}
It remains to prove in the following result.
\begin{lemma} \label{lemma.liminfXdisc}
Suppose that $S_h(\epsilon)<+\infty$ for all $\epsilon>\epsilon'$ and $S_h(\epsilon)=+\infty$ for all $\epsilon<\epsilon'$. Then  
\[
\liminf_{n\to\infty} \|X_h(n)\|=0, \quad \text{a.s.}
\]
\end{lemma}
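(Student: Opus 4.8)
The plan is to argue by contradiction, exploiting the fact that the deterministic half-step is strictly dissipative on any annulus bounded away from the origin while, in the present case~(B), the noise increments $\|U_h(n)\|$ no longer vanish pathwise and so must be controlled only on average. Suppose $\liminf_{n\to\infty}\|X_h(n)\|>0$ on an event of positive probability; aggregating over rationals there is then a rational $\ell_0>0$ with $\mathbb P[A]>0$, where $A=\{\omega:\liminf_{n\to\infty}\|X_h(n,\omega)\|\ge \ell_0\}$. Since the upper bound $\limsup_{n\to\infty}\|X_h(n)\|\le c_4$ has already been established a.s.\ in this subsection, on $A$ the solution is eventually confined to a deterministic annulus: there is $N(\omega)$ with $\ell_0/2<\|X_h(n,\omega)\|<c_5$ for $n\ge N(\omega)$, where $c_5:=c_4+1$. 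By Lemma~\ref{lem.Xastcontract} we have $\|X_h^\star(n,\omega)\|<c_5$, and Lemma~\ref{lemma.Finverse} (with $K=\ell_0/2$) forces $\|X_h^\star(n,\omega)\|>F_h^{-1}(\ell_0/2)=:a>0$. Hence, using the continuity and strict positivity of $\bar f$ from Lemma~\ref{lemma:barfphi}, the deterministic constant $\delta:=\inf_{a\le y\le c_5}\bar f(y)>0$ satisfies $\langle X_h^\star(n,\omega),f(X_h^\star(n,\omega))\rangle\ge\delta$ for all $n\ge N(\omega)$.

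Next I would sum the exact increment identity \eqref{eq.Xhnp12Xhn2} from $i=N$ to $n-1$ and discard the nonpositive $-h^2\|f(X_h^\star(i))\|^2$ terms. Writing $M$ for the martingale \eqref{def.M2} with increments \eqref{def.Yi} and $U_h$ for \eqref{def.U}, and noting $M(n)-M(N)=2\sum_{i=N}^{n-1}\langle X_h^\star(i),U_h(i+1)\rangle$, this yields for $\omega\in A$ and $n\ge N(\omega)$
\[
\|X_h(n)\|^2\le \|X_h(N)\|^2-2h\delta(n-N)+\sum_{i=N}^{n-1}\|U_h(i+1)\|^2+M(n)-M(N).
\]
The deterministic drift contributes a term linear in $n$ with negative sign, so it remains to show that the two fluctuation terms grow strictly sublinearly, forcing the right-hand side to $-\infty$.

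For the quadratic noise term, set $P(n)=\sum_{i=1}^n\|U_h(i)\|^2$. The finiteness of $S_h(\epsilon)$ for $\epsilon>\epsilon'$ forces $1-\Phi(\epsilon/\|\sigma_h(n)\|_F)\to 0$, hence $\|\sigma_h(n)\|_F\to 0$; therefore $\mathbb E\|U_h(i)\|^2=h\|\sigma_h(i-1)\|_F^2\to 0$ and by Cesàro $\tfrac1n\sum_{i=1}^n\mathbb E\|U_h(i)\|^2\to 0$. Since the $\|U_h(i)\|^2$ are \emph{independent} with uniformly bounded variances $\mathrm{Var}\,\|U_h(i)\|^2\le 2\bigl(h\|\sigma_h(i-1)\|_F^2\bigr)^2$, Kolmogorov's strong law for independent summands gives $P(n)/n\to 0$ a.s. For the martingale I would invoke Lemma~\ref{lemma.recurrentmartingale}, so that $M$ obeys the dichotomy \eqref{eq.Mlimitoroscillate}: either $M$ converges, whence $M(n)/n\to 0$, or $\liminf_{n\to\infty}M(n)=-\infty$, whence $\liminf_{n\to\infty}M(n)/n\le 0$; in either case $\liminf_{n\to\infty}M(n)/n\le 0$. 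Dividing the displayed inequality by $n$, the convergent part tends to $-2h\delta$ and so $\liminf_{n\to\infty}\|X_h(n)\|^2/n\le -2h\delta<0$, contradicting $\|X_h(n)\|^2\ge 0$. Thus $\mathbb P[A]=0$, and since $\liminf_{n\to\infty}\|X_h(n)\|\ge 0$ trivially, the conclusion $\liminf_{n\to\infty}\|X_h(n)\|=0$ a.s.\ follows. I expect the main obstacle to be precisely the handling of $P(n)$: unlike in Lemma~\ref{lemma.Xliminffinite}, the noise does not tend to zero pathwise and cannot be absorbed into the drift step by step, so one must first extract $\|\sigma_h(n)\|_F\to 0$ from the case-(B) hypothesis and then use independence of the increments to obtain the sublinear growth $P(n)=o(n)$.
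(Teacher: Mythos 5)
Your proof is correct, and its overall architecture coincides with the paper's: argue by contradiction, use the bound $\limsup_{n\to\infty}\|X_h(n)\|\leq c_4$ already established in this section to confine the trajectory to a deterministic annulus, extract there a uniform drift bound $\langle X_h^\star(n),f(X_h^\star(n))\rangle\geq\delta>0$ (via Lemmas~\ref{lem.Xastcontract}, \ref{lemma.Finverse} and~\ref{lemma:barfphi}; the paper phrases this through the function $R$ of \eqref{def.R} and the bound $R(X_h^\star)\geq\lambda>0$), and then show the negative linear drift dominates the sublinearly growing fluctuation terms. Where you genuinely diverge is in the two technical steps. For the quadratic noise term, the paper (Lemma~\ref{lemma.sigmahxiLLN}) centres $\beta(n)=\|\sigma_h(n-1)\xi(n)\|^2$ and applies the fourth--moment strong law from Williams, whereas you use Kolmogorov's variance criterion with the (correct) bound $\Var(\|U_h(i)\|^2)\leq 2\bigl(h\|\sigma_h(i-1)\|_F^2\bigr)^2$; both are legitimate, and yours is marginally more elementary. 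The more substantive difference is the martingale term: the paper proves the full limit $M(n)/n\to 0$ a.s.\ by estimating the quadratic variation via \eqref{eq.estsqvM} --- which is where the a.s.\ boundedness of $\|X_h^\star(n)\|$ and $\|\sigma_h(n)\|_F\to 0$ enter --- and then splitting on $\{\langle M\rangle(n)\to\infty\}$ (martingale law of large numbers) versus $\{\langle M\rangle(\infty)<\infty\}$ (martingale convergence). You instead invoke the Bramson--Quastel--Rosenthal dichotomy (Lemma~\ref{lemma.recurrentmartingale}), which yields only $\liminf_{n\to\infty}M(n)/n\leq 0$; this is weaker but exactly sufficient, since you only take a liminf of a one--sided inequality against the nonnegativity of $\|X_h(n)\|^2$. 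Your route buys independence of the martingale step from any quadratic--variation estimate (boundedness of $X_h^\star$ is not needed there), while the paper's route yields the stronger intermediate Ces\`aro statement $\frac{1}{n}\sum_{i=1}^n hR(X_h^\star(i-1))\to 0$ a.s., whose analogue is reused in the linear case. One small remark: discarding the $-h^2\|f(X_h^\star(i))\|^2$ terms, as you do, is harmless, since only the inner--product part of $R$ is needed for the lower bound.
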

In order to do this we need first a technical lemma.
\begin{lemma} \label{lemma.sigmahxiLLN}
$S_h(\epsilon)<+\infty$ for all $\epsilon>\epsilon'$ and $S_h(\epsilon)=+\infty$ for all $\epsilon<\epsilon'$. Then 
\begin{equation}  \label{eq.sigmanhto0}
\lim_{n\to\infty} \|\sigma_h(n)\|_F=0,
\end{equation} 
and 
\begin{equation} \label{eq.sigmahxiLLN}
\lim_{n\to\infty} \frac{1}{n}\sum_{j=0}^{n-1} \|\sigma_h(j-1)\xi(j)\|^2=0, \quad \text{a.s.}
\end{equation}
\end{lemma}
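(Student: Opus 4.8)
The plan is to prove the two assertions in turn, establishing the pointwise decay \eqref{eq.sigmanhto0} first and then exploiting it to control the averages in \eqref{eq.sigmahxiLLN}. For \eqref{eq.sigmanhto0}, I would fix any $\epsilon_0>\epsilon'$. By hypothesis $S_h(\epsilon_0)<+\infty$, so the general term of the convergent series \eqref{def.Seps} must tend to zero, i.e. $1-\Phi(\epsilon_0/\|\sigma_h(n)\|_F)\to 0$ as $n\to\infty$. Since $\Phi$ is strictly increasing with $\Phi(x)\to 1$ as $x\to+\infty$, this is equivalent to $\epsilon_0/\|\sigma_h(n)\|_F\to+\infty$, which forces $\|\sigma_h(n)\|_F\to 0$; indices with $\|\sigma_h(n)\|_F=0$ contribute nothing to the sum and are consistent with this limit.

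For \eqref{eq.sigmahxiLLN}, the key observation is to set $W(j)=\|\sigma_h(j-1)\xi(j)\|^2$ for $j\geq 1$ and to note that, because $\sigma_h$ is deterministic and the $\xi(j)$ are independent under Assumption~\ref{ass.normal}, the random variables $W(j)$ are independent. The vector $\sigma_h(j-1)\xi(j)$ is a centred Gaussian in $\mathbb{R}^d$ with covariance $\sigma_h(j-1)\sigma_h(j-1)^\top$, so by \eqref{eq.varsigxi} we have $\mathbb{E}[W(j)]=\|\sigma_h(j-1)\|_F^2$, and the standard variance formula for a Gaussian quadratic form gives $\Var(W(j))=2\,\mathrm{tr}\bigl((\sigma_h(j-1)\sigma_h(j-1)^\top)^2\bigr)\leq 2\|\sigma_h(j-1)\|_F^4$, the inequality following since $\mathrm{tr}(A^2)\leq(\mathrm{tr}\,A)^2$ for symmetric positive semidefinite $A$. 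By \eqref{eq.sigmanhto0} the sequence $\|\sigma_h(n)\|_F$ is bounded, so $\Var(W(j))$ is bounded uniformly in $j$, whence $\sum_{j\geq 1}\Var(W(j))/j^2<+\infty$. Kolmogorov's strong law of large numbers for independent, not necessarily identically distributed, summands then yields $\frac{1}{n}\sum_{j=1}^n\bigl(W(j)-\mathbb{E}[W(j)]\bigr)\to 0$ a.s.

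To finish, I would combine this with a deterministic Cesàro argument for the means: since $\mathbb{E}[W(j)]=\|\sigma_h(j-1)\|_F^2\to 0$ by \eqref{eq.sigmanhto0}, the averages $\frac{1}{n}\sum_{j=1}^n\mathbb{E}[W(j)]\to 0$ as well. Adding the two limits gives $\frac{1}{n}\sum_{j=1}^n W(j)\to 0$ almost surely, which is \eqref{eq.sigmahxiLLN} up to the harmless index shift in the stated sum.

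The hard part will be the variance estimate for the Gaussian quadratic form $W(j)$. Once the bound $\Var(W(j))\leq 2\|\sigma_h(j-1)\|_F^4$ is secured---either through the trace identity above, or by a direct fourth-moment computation expanding $\|\sigma_h(j-1)\xi(j)\|^4$ and using $\mathbb{E}[\xi_i^4]=3$ together with the mutual independence of the components in Assumption~\ref{ass.normal}---the Kolmogorov summability criterion applies at once and the remainder of the argument is routine. Everything else (term-of-a-convergent-series, monotonicity of $\Phi$, Cesàro convergence) is elementary.
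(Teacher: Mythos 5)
Your proposal is correct, and its first half (term of a convergent series forces $1-\Phi(\epsilon_0/\|\sigma_h(n)\|_F)\to 0$, hence $\epsilon_0/\|\sigma_h(n)\|_F\to\infty$, hence \eqref{eq.sigmanhto0}) is exactly the paper's argument. For \eqref{eq.sigmahxiLLN} your overall decomposition also matches the paper's: centre the independent summands $W(j)=\|\sigma_h(j-1)\xi(j)\|^2$ at their means $\mathbb{E}[W(j)]=\|\sigma_h(j-1)\|_F^2$ (via \eqref{eq.varsigxi}), prove a strong law for the centred sequence, and dispose of the means by a Ces\`aro argument using \eqref{eq.sigmanhto0}. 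Where you genuinely diverge is in the key limit theorem and the moment estimate feeding it: the paper bounds the \emph{fourth} moments of the centred variables uniformly, using only the crude inequality $\|\sigma_h(n-1)\xi(n)\|_2\leq\|\sigma_h(n-1)\|_F\|\xi(n)\|_2$ and the finiteness of $\mathbb{E}\bigl[\|\xi(n)\|_2^8\bigr]$, and then invokes the bounded-fourth-moment strong law (Williams, Theorem 7.2); you instead bound the \emph{variance} of the Gaussian quadratic form, $\Var(W(j))=2\,\mathrm{tr}\bigl((\sigma_h(j-1)\sigma_h(j-1)^\top)^2\bigr)\leq 2\|\sigma_h(j-1)\|_F^4$, which is bounded since $\|\sigma_h(n)\|_F\to 0$, and invoke Kolmogorov's criterion $\sum_j \Var(W(j))/j^2<+\infty$. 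Both routes are sound. Yours is more economical in that it needs only second moments and a standard criterion, at the price of a specifically Gaussian computation (the quadratic-form variance identity, or your alternative direct expansion with $\mathbb{E}[\xi_i^4]=3$, both of which are legitimate under Assumption~\ref{ass.normal}); the paper's eighth-moment estimate is rougher but uses nothing about Gaussian quadratic forms beyond the Frobenius-norm bound, so it would survive weakening the Gaussian assumption to any i.i.d.\ noise with finite eighth moments. Your closing remark about the index shift is right: the displayed sum in \eqref{eq.sigmahxiLLN} contains a typo at $j=0$, and both your proof and the paper's treat $\frac{1}{n}\sum_{j=1}^{n}\|\sigma_h(j-1)\xi(j)\|^2$.
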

\begin{proof}
First, we note that if $S_h(\epsilon)<+\infty$ for some $\epsilon>0$, it follows that 
\[
1-\Phi\left(\frac{\epsilon}{h\|\sigma_h(n)\|_F}\right)\to 0, \text{ as $n\to\infty$}.
\]
and therefore \eqref{eq.sigmanhto0} holds. 
Define 
\[
\beta(n)=\|\sigma_h(n-1)\xi(n)\|^2, \quad n\geq 1.
\]
Notice that the independence of $\xi(n)$ imply that $(\beta(n))_{n\geq 1}$ is a sequence of independent random variables. 
Using \eqref{eq.varsigxi}, we have that 
\begin{equation*} 
\mathbb{E}[\beta(n)]=\mathbb{E}[\|\sigma_h(n-1)\xi(n)\|^2]= \|\sigma_h(n-1)\|_F^2, \quad n\geq 1.
\end{equation*}
Notice from \eqref{eq.sigmanhto0} that $\mathbb{E}[\beta(n)]\to 0$ as $n\to\infty$. 
Define $\tilde{\beta}(n)=\beta(n)-\mathbb{E}[\beta(n)]$ for $n\geq 1$. Then  $(\tilde{\beta}(n))_{n\geq 1}$ is a sequence of independent zero mean 
random variables. We will presently show that 
\begin{equation} \label{eq.4thmombeta0}
\lim_{n\to\infty} \mathbb{E}[\beta(n)^4]= 0.
\end{equation}
Taken together with $\mathbb{E}[\beta(n)]\to 0$ as $n\to\infty$, we see that $\lim_{n\to\infty} \mathbb{E}[\tilde{\beta}(n)^4]= 0$, so that 
there exists a constant $K>0$ for which $\mathbb{E}[\tilde{\beta}(n)^4]\leq K$ for all $n\geq 0$. Therefore, by this estimate, and the fact that 
 $(\tilde{\beta}(n))_{n\geq 1}$ is a sequence of independent zero mean 
random variables, the version of the strong law of large numbers appearing in Theorem 7.2 in \cite{Williams:1991}, enables us to conclude that
\[
\lim_{n\to\infty} \frac{1}{n}\sum_{j=1}^n \tilde{\beta}(j)=0, \quad\text{a.s.}
\]
Since $\mathbb{E}[\beta(n)]\to 0$ as $n\to\infty$, we have that 
\[
\lim_{n\to\infty} \frac{1}{n}\sum_{j=1}^n \beta(j)=0, \quad\text{a.s.}
\]
which is precisely \eqref{eq.sigmahxiLLN}.

It remains to prove \eqref{eq.4thmombeta0}. Since $\|Ax\|_2\leq \|A\|_F\|x\|_2$ for any $x\in\mathbb{R}^r$ and $A\in \mathbb{R}^{d\times r}$, we have that 
\begin{align*}
\mathbb{E}[\beta(n)^4]&=\mathbb{E}[\|\sigma_h(n-1)\xi(n)\|_2^8]\leq \mathbb{E}[\|\sigma_h(n-1)\|_F^8\|\xi(n)\|_2^8]\\
&=\|\sigma_h(n-1)\|_F^8\mathbb{E}[\|\xi(n)\|_2^8].
\end{align*}
Since $(\xi(n))_{n\geq 1}$ are identically and distributed Gaussian vectors with independent entries (each of which is a standard normal random variable), we have that there is $K_1:=\mathbb{E}[\|\xi(n)\|_2^8]$ for all $n\geq 1$. Hence $\mathbb{E}[\beta(n)^4]\leq K_1\|\sigma_h(n-1)\|_F^8$ for $n\geq 1$. 
Since \eqref{eq.sigmanhto0} holds, we have that $\mathbb{E}[\beta(n)^4]\to 0$ as $n\to\infty$, as claimed. 
\end{proof}

\subsection{Proof of Lemma~\ref{lemma.liminfXdisc}}
Recall the representation of $\|X_h\|^2$ in \eqref{eq.X2sum} i.e.,
\begin{multline} \label{eq.X2sum3}
\|X_h(n)\|^2=\|X_h(0)\|^2 - 2\sum_{i=1}^n h\langle f(X_h^\star(i-1)), X_h^\star(i-1)\rangle +\sum_{i=1}^n h\|\sigma_h(i-1)\xi(i)\|^2
\\- \sum_{i=1}^n h^2\|f(X_h^\star(i-1))\|^2+ M(n), \quad n\geq 1,
\end{multline}
where the martingale $M$ defined by \eqref{def.Yi} and \eqref{def.M2} i.e., 
\begin{gather*} 
Y^{(j)}(n)= 2\sqrt{h} \sum_{k=1}^d [X_h^\star(n)]_k  [\sigma_h(n)]_{kj}, \quad j=1,\ldots,r, \quad n\geq 1,\\
M(n)=\sum_{i=1}^n \sum_{j=1}^r Y^{(j)}(i-1)\xi^{(j)}(i), \quad n\geq 1.
\end{gather*}
Then $M$ has quadratic variation estimated by \eqref{eq.estsqvM} i.e.,
\[
\langle M\rangle(n) \leq 4h\sum_{j=0}^{n-1}\|X_h^\star(j)\|^2\|\sigma_h(j)\|^2_F.
\]
Since $\|X_h^\ast(n)\|$ is a bounded sequence, and $\|\sigma_h(n)\|_F\to 0$ as $n\to\infty$, we have that 
\[
\lim_{n\to\infty} \frac{1}{n}\langle M\rangle(n)=0, \quad \text{a.s.}
\]
Suppose that $A_1=\{\omega: \lim_{n\to\infty} \langle M\rangle(n,\omega)=+\infty\}$. Then by the Law of Large numbers for martingales, we have 
\[
\lim_{n\to\infty} \frac{1}{n}M(n,\omega)=\lim_{n\to\infty}\frac{M(n,\omega)}{\langle M\rangle(n,\omega)}\cdot \frac{\langle M\rangle(n,\omega)}{n}=0,
\]
for a.a. $\omega\in A_1$. Suppose that $A_2=\{\omega: \lim_{n\to\infty} \langle M\rangle(n,\omega)<+\infty\}$. Then by the martingale convergence theorem
we have that $\lim_{n\to\infty} M(n,\omega)$ is finite for a.a. $\omega\in A_2$, so we automatically have $\lim_{n\to\infty} M(n,\omega)/n=0$ for a.a. $\omega\in A_2$. Therefore we have that 
\begin{equation}\label{eq.Mndivnto0}
\lim_{n\to\infty} \frac{M(n)}{n}=0, \quad\text{a.s.}
\end{equation}
By Lemma~\ref{lemma.liminfXdisc} we have that 
\[
\lim_{n\to\infty} \frac{1}{n}\sum_{i=1}^n h\|\sigma_h(i-1)\xi(i)\|^2=0, \quad\text{a.s.}
\]
Recalling that $n\mapsto \|X_h(n)\|$ is a.s. bounded, we can use the last limit, \eqref{eq.Mndivnto0} and \eqref{eq.X2sum3} to obtain
\begin{equation} \label{eq.RXstarave0}
\lim_{n\to\infty} \frac{1}{n}\sum_{i=1}^n hR(X_h^\star(i-1))=0, \quad\text{a.s.} 
\end{equation} 
recalling the definition of $R$ from \eqref{def.R}. 

Next, we suppose that $A$ defined by 
\[
A=\{\omega:\liminf_{n\to\infty} \|X_h(n,\omega)\|>0\}.
\]
is such that $\mathbb{P}[A]>0$. Let $\Omega_1=\{\omega: \limsup_{n\to\infty} \|X_h(n,\omega)\|<+\infty\}$ and $A_1=A\cap \Omega_1$. Then $\mathbb{P}[A_1]=\mathbb{P}[A]>0$. Then for each $\omega\in A_1$, $\liminf_{n\to\infty} \|X_h^\star(n,\omega)\|>0$. Therefore, using the fact that $\|X_h^\star(n)\|\leq \|X_h(n)\|$, we see 
that $\|X_h^\star(n,\omega)\|$ is bounded for $\omega\in A_1$ and therefore, for every $\omega\in A_1$ there is an $N(\omega)\in \mathbb{N}$ and 
$0<\underline{X}_h(\omega)\leq \overline{X}_h(\omega)<+\infty$ such that 
\[
\frac{1}{2}\underline{X}_h(\omega)\leq \|X_h^\star(n,\omega)\|\leq 2\overline{X}_h(\omega), \quad n\geq N(\omega).
\] 
Now, we recall that $R:\mathbb{R}^d\to \mathbb{R}$ defined by \eqref{def.R} is continuous and obeys $R(x)>0$ for all $x\neq 0$ and $R(0)=0$.
Therefore, for any $0<a\leq b<+\infty$, we have 
\[
\inf_{a\leq \|x\|\leq b} R(x)=:L_h(a,b)>0.
\] 
Therefore, for all $n\geq N(\omega)$ we have 
\[
R(\|X_h^\star(n,\omega)\|)\geq L_h\left(\frac{1}{2}\underline{X}_h(\omega),2\overline{X}_h(\omega)\right)=:\lambda_h(\omega)>0.
\]
Hence, as $R(x)\geq 0$ for all $x\geq 0$, we have for $n\geq N(\omega)+1$ that
\begin{align*}
\frac{1}{n}\sum_{i=1}^n hR(X_h^\star(i-1,\omega))
&\geq \frac{1}{n}\sum_{i=N(\omega)+1}^n hR(X_h^\star(i-1,\omega))\\
&\geq  \frac{1}{n}\sum_{i=N(\omega)+1}^n h \lambda_h(\omega) 
=\frac{1}{n} (n-N(\omega))  h\lambda_h(\omega).
\end{align*}
Therefore, we have for each $\omega\in A_1$  
\[
\liminf_{n\to\infty} \frac{1}{n}\sum_{i=1}^n hR(X_h^\star(i-1,\omega)) \geq  h\lambda_h(\omega)>0,
\]
or 
\[
\liminf_{n\to\infty} \frac{1}{n}\sum_{i=1}^n hR(X_h^\star(i-1)) >0, \quad \text{on $A_1$}. 
\]
Since $\mathbb{P}[A_1]>0$ this contradicts \eqref{eq.RXstarave0}, and so we must have $\mathbb{P}[A_1]=0$. Hence we have that 
$\liminf_{n\to\infty} \|X_h(n)\|=0$ a.s. as claimed. 

\section{Proof of Theorem~\ref{thm.linsplit}}
We prove the result in two parts. First, we prove everything apart from the limit inferior in part (B), and then show that 
\[
\liminf_{n\to\infty} \|Y_h(n)\|=0, \quad \text{a.s.}
\]
in case (B), when the solution has already been shown to be bounded.
\subsection{Proof of Theorem~\ref{thm.linsplit} apart from liminf in part (B)}
Part (C) is a direct consequence of part (A) of Theorem~\ref{theorem.XunboundedXboundedbelow}. The
lower bound in part (B) is an automatic consequence of part (B) of Theorem~\ref{theorem.XunboundedXboundedbelow}.

It remains to prove part (A) and the upper bound in part (B). We start by determining the eigenvalues of $C(h)$.
If $c_{C(h)}$ be the characteristic polynomial of $C(h)$, then we have $c_{C(h)}(0)=(-1)^d \det(C(h))\neq 0$ and
\[
c_{C(h)}(\lambda)=\frac{1}{\det(I-Ah)}(\lambda h)^d c_A\left(\frac{\lambda -1}{\lambda h}\right), \quad \lambda\neq 0.
\]
Therefore, $\lambda_A$ is an eigenvalue of $A$ if and only if $\lambda_h=1/(1-\lambda_A h)$ is an eigenvalue of $C(h)$.
Since \eqref{eq.linstab} holds, $0$ is not an eigenvalue of $A$, and for every $h>0$,
\[
\text{Re}(\lambda_A)<0<\frac{h}{2}|\lambda_A|^2.
\]
This implies that $|1-h\lambda_A|<1$, and hence that $|\lambda_h|<1$ for each eigenvalue of $C(h)$.
$Y_h$ obeys
\[
Y_h(n)=C(h)^n \zeta+ \sum_{j=1}^n C(h)^{n-j}U_h(j), \quad n\geq 0.
\]

For part (A), if $S_h(\epsilon)<+\infty$ for every $\epsilon>0$, by Lemma~\ref{lemma.Uasy}, we have that $U_h(n)\to 0$ as $n\to\infty$.
Since all eigenvalues of $C(h)$ are less than unity in modulus, it follows that $\sum_{j=1}^n C(h)^{n-j}U_h(j)\to 0$  as $n\to\infty$,
proving the result. To prove the upper bound in part (B), we note that for every $\epsilon\in (0,(1-\rho(C(h)))/2)$, there is a norm
$\|\cdot\|_N$ such that
\[
\|C(h)^k x\|_N\leq \|C(h)^k\|_N \|x\|_N \leq (\rho(C(h))+\epsilon)^k \|x\|_N \text{ for all $k\geq 0$ and all $x\in \mathbb{R}^d$.}
\]
Hence we have
\[
\|Y_h(n)\|_N\leq (\rho(C(h))+\epsilon)^n \|\zeta\|_N  + \sum_{j=1}^n   (\rho(C(h))+\epsilon)^{n-j} \|U_h(j)\|_N.
\]
Therefore taking limits and using the fact that there is a $c>0$ such that $\|x\|_N\leq c\|x\|_1$ for all $x\in\mathbb{R}^d$, we obtain
\[
\limsup_{n\to\infty} \|Y_h(n)\|_N\leq  \frac{1}{1-(\rho(C(h))+\epsilon)} c \limsup_{n\to\infty}\|U_h(n)\|_1.
\]
By part (C) of Lemma~\ref{lemma.Uasy}, the righthand side is deterministic and finite, so the upper bound in part (B) has been established.

\subsection{Proof of zero liminf and average in case (B)}
We start by recalling a result of which may be found in e.g., Rugh~\cite{Rugh:1996}.
\begin{lemma} \label{lemma.rugh}
Let $C$ be a $d\times d$ real matrix. If all the eigenvalues of $C$ lie within the unit disc in the complex plane, then there exists a positive definite 
$d\times d$ real matrix $M$ such that
\[
C^T M C-M=-I_d.
\]
\end{lemma}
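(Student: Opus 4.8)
The plan is to exhibit $M$ explicitly as the sum of a matrix series and then check each required property in turn. The natural candidate for the solution of a discrete--time Lyapunov equation is
\[
M = \sum_{k=0}^{\infty} (C^k)^T C^k,
\]
so the first task is to show that this series converges to a well--defined real $d\times d$ matrix. Since every eigenvalue of $C$ lies strictly inside the unit disc, the spectral radius satisfies $\rho(C)<1$, and exactly as in the proof of Theorem~\ref{thm.linsplit} I would fix $\epsilon\in(0,1-\rho(C))$ together with an induced matrix norm $\|\cdot\|_N$ for which $\|C^k\|_N\leq(\rho(C)+\epsilon)^k$ for all $k\geq 0$. By equivalence of norms on $\mathbb{R}^{d\times d}$ and the identity $\|(C^k)^T C^k\|_2=\|C^k\|_2^2$, each summand is bounded in norm by a constant multiple of $(\rho(C)+\epsilon)^{2k}$, which is summable because $\rho(C)+\epsilon<1$. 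Hence the series converges absolutely, and $M$ is a well--defined real matrix.

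Next I would record the algebraic properties. Symmetry of $M$ is immediate because every term $(C^k)^T C^k$ is symmetric, and $M$ is real because $C$ is real. For positive definiteness, I would note that for any $x\in\mathbb{R}^d$,
\[
x^T M x = \sum_{k=0}^{\infty} x^T (C^k)^T C^k x = \sum_{k=0}^{\infty} \|C^k x\|_2^2 \geq \|x\|_2^2,
\]
since the $k=0$ term alone contributes $\|x\|_2^2$; thus $x^T M x>0$ whenever $x\neq 0$, so $M$ is positive definite.

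Finally, I would verify the Lyapunov identity by shifting the summation index. Using the norm convergence just established to bring $C^T$ and $C$ inside the sum,
\[
C^T M C = \sum_{k=0}^{\infty} (C^{k+1})^T C^{k+1} = \sum_{j=1}^{\infty} (C^j)^T C^j = M - I_d,
\]
where the last step discards the omitted $k=0$ term $(C^0)^T C^0 = I_d$. Rearranging yields $C^T M C - M = -I_d$, as required.

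I do not expect a genuine obstacle in this argument: the only point demanding care is the convergence of the matrix series, and this rests entirely on $\rho(C)<1$ together with the geometric bound on $\|C^k\|_N$ that is already invoked in the preceding proof. Once convergence is in hand, symmetry, positive definiteness, and the telescoping identity are routine verifications.
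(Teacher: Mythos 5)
Your proof is correct. Note, however, that the paper itself offers no proof of this lemma at all: it simply recalls the result with a citation to Rugh~\cite{Rugh:1996}, so there is no internal argument to compare against. What you have written is the canonical construction --- the explicit series $M=\sum_{k=0}^{\infty}(C^k)^TC^k$, absolute convergence from $\rho(C)<1$ via the norm bound $\|C^k\|_N\leq(\rho(C)+\epsilon)^k$ (the same bound the paper invokes in its proof of Theorem~\ref{thm.linsplit}), positive definiteness from the $k=0$ term, and the telescoping identity $C^TMC=M-I_d$ --- and each step is sound: the only point needing justification, interchanging $X\mapsto C^TXC$ with the infinite sum, is legitimate because this map is linear and continuous and the series converges absolutely. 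In effect you have supplied the standard textbook proof that the paper's citation points to, making the lemma self-contained; the paper's choice to cite instead buys brevity, while yours buys completeness at essentially no cost.
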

Conversely, the existence of a positive definite $M$ implies that all the eigenvalues of $C$ lie inside the unit disc in the complex plane.

We will have achieved our goal once we have shown the following result.
\begin{lemma}
Suppose that the matrix $A$ obeys \eqref{eq.linstab} and that there exists $\epsilon'>0$ such that  $S_h(\epsilon)$ defined by \eqref{def.Seps} obeys  $S_h(\epsilon)<+\infty$ for all $\epsilon>\epsilon'$ and $S_h(\epsilon)=+\infty$ for all $\epsilon<\epsilon'$. Then 
\[
\liminf_{n\to\infty} \|Y_h(n)\|=0, \quad \lim_{n\to\infty} \frac{1}{n}\sum_{j=1}^n \|Y_h(j)\|^2=0, \quad \text{a.s.}
\]
\end{lemma}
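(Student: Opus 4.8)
The plan is to exploit the Lyapunov matrix $M$ furnished by Lemma~\ref{lemma.rugh} to build a scalar energy whose one--step increment isolates $\|Y_h(n)\|^2$ with a favourable sign. Set $V(n):=\langle Y_h(n), M Y_h(n)\rangle$. Writing $Y_h(n+1)=C(h)Y_h(n)+U_h(n+1)$ with $U_h(n+1)=\sqrt{h}\sigma_h(n)\xi(n+1)$, and using the identity $C(h)^T M C(h)=M-I_d$ together with the symmetry of $M$, I would expand $V(n+1)$ to obtain
\[
V(n+1)=V(n)-\|Y_h(n)\|^2+2\langle C(h)Y_h(n), M U_h(n+1)\rangle+\langle U_h(n+1), M U_h(n+1)\rangle.
\]
Telescoping this identity from $j=0$ to $n-1$ and dividing by $n$ gives the decomposition
\[
\frac{1}{n}\sum_{j=0}^{n-1}\|Y_h(j)\|^2=\frac{V(0)-V(n)}{n}+\frac{\tilde M(n)}{n}+\frac{1}{n}\sum_{j=0}^{n-1}\langle U_h(j+1), M U_h(j+1)\rangle,
\]
where $\tilde M(n):=2\sum_{j=0}^{n-1}\langle C(h)Y_h(j), M U_h(j+1)\rangle$ is a martingale, since $C(h)Y_h(j)$ is $\mathcal{F}(j)$--measurable and $U_h(j+1)$ has zero conditional mean.

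The crux is to send each of the three terms on the right to zero almost surely. First, because we are in case (B) and the upper bound on $\limsup_{n\to\infty}\|Y_h(n)\|$ has already been established in the previous subsection, the sequence $(V(n))$ is a.s.\ bounded, so $(V(0)-V(n))/n\to 0$ a.s. Second, for the noise--average term I would adapt the argument of Lemma~\ref{lemma.sigmahxiLLN}: the variables $\gamma(j):=\langle U_h(j+1), M U_h(j+1)\rangle=h\,\xi(j+1)^T\sigma_h(j)^T M \sigma_h(j)\xi(j+1)$ are independent with $\mathbb{E}[\gamma(j)]=h\,\mathrm{tr}(\sigma_h(j)^T M \sigma_h(j))\le h\|M\|\,\|\sigma_h(j)\|_F^2\to 0$ by \eqref{eq.sigmanhto0}, and a fourth--moment estimate of the same shape as in that lemma yields $\mathbb{E}[\gamma(j)^4]\le K\|\sigma_h(j)\|_F^8\to 0$; the strong law of large numbers of Theorem~7.2 in \cite{Williams:1991} then forces the Cesàro average of $\gamma(j)$ to vanish a.s.

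For the martingale term I would mirror the bookkeeping of Lemma~\ref{lemma.liminfXdisc}: the quadratic variation of $\tilde M$ is dominated by a constant multiple of $\sum_{j=0}^{n-1}\|\sigma_h(j)\|_F^2\|Y_h(j)\|^2$, which grows sublinearly since $\|Y_h(j)\|$ is bounded and $\|\sigma_h(j)\|_F\to 0$; splitting on the events $\{\langle\tilde M\rangle(n)\to\infty\}$ and $\{\langle\tilde M\rangle(n)<\infty\}$ and invoking respectively the law of large numbers for martingales and the martingale convergence theorem gives $\tilde M(n)/n\to 0$ a.s. Combining the three limits yields $\frac{1}{n}\sum_{j=1}^n\|Y_h(j)\|^2\to 0$ a.s.\ (the shift of summation index changes only boundary terms, which are $O(1/n)$ by boundedness). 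Finally, the zero--liminf claim follows from this Cesàro limit: if $\liminf_{n\to\infty}\|Y_h(n)\|^2$ were positive on a set of positive probability, the average would be bounded below there by a positive constant, contradicting the limit just proven, so $\liminf_{n\to\infty}\|Y_h(n)\|=0$ a.s.

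The main obstacle I anticipate is the control of $\tilde M(n)/n$: one must verify the sublinear growth of its quadratic variation and then treat the dichotomy of its long--run behaviour cleanly, exactly as in Lemma~\ref{lemma.liminfXdisc}, before the energy decomposition can be closed.
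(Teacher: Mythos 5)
Your proposal is correct and follows essentially the same route as the paper: the Lyapunov matrix $M$ from Lemma~\ref{lemma.rugh}, the telescoped energy identity isolating $-\|Y_h(n)\|^2$, the martingale dichotomy as in Lemma~\ref{lemma.liminfXdisc}, the law-of-large-numbers control of the noise term via Lemma~\ref{lemma.sigmahxiLLN}, and the liminf conclusion from the vanishing Ces\`aro average. The only cosmetic difference is that the paper factors $M=PP^T$ and bounds $\langle U_h,MU_h\rangle=\|P^TU_h\|_2^2\leq\|P^T\|_2^2\|U_h\|_2^2$ so as to cite Lemma~\ref{lemma.sigmahxiLLN} directly, whereas you re-run its fourth-moment argument on $\gamma(j)$; both are valid.
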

\begin{proof}
It has been shown above that all the eigenvalues of the matrix $C=C(h)$ lie inside the unit disc in the complex plane. Therefore, by Lemma~\ref{lemma.rugh}
there exists a positive definite matrix $M=M(h)$ such that 
\[
C(h)^T M(h) C(h)-M(h)=-I_d.
\]
Hereinafter, we write $M=M(h)$ and $C=C(h)$. 

Define the function $V:\mathbb{R}^d\to \mathbb{R}$ by $V(x)=x^TMx$ for $x\in\mathbb{R}^d$. 
We have that $Y_h(n+1)=CY_h(n)+U_h(n+1)$ for $n\geq 0$ with $Y_h(0)=\zeta$. Therefore, we have 
\begin{multline*}
V(Y_n(n+1))-V(Y_h(n))=-Y^T(n)Y(n)+Y_h(n)^TC^TMU_h(n+1)\\+U_h(n+1)^TMCY_h(n)+U_h(n+1)^TMU_h(n+1), \quad n\geq 0.
\end{multline*}
using $C^T M C-M=-I_d$ to simplify the first term on the right hand side. We now simplify the other terms on the right hand side. 
 
Since $M$ is a positive definite matrix, there exists a matrix $P$ such that $M=PP^T$. Then 
\begin{align*}
U_h(n+1)^TMU_h(n+1)&=U_h(n+1)^TPP^TU_h(n+1)\\
&=(P^TU_h(n+1))^T P^TU_h(n+1)=\|P^T U_h(n+1)\|^2_2.
\end{align*}
Define $k(n+1)=Y_h(n)^TC^TMU_h(n+1)+U_h(n+1)^TMCY_h(n)$ for $n\geq 0$. Then using the fact that $M$ is symmetric and the definition of $U_h$, we get 
\begin{align*}
k(n+1)&= (M^TC Y_h(n))^T U_h(n+1)+U_h(n+1)^T MCY_h(n)\\
&= (MC Y_h(n))^T U_h(n+1)+U_h(n+1)^T MCY_h(n) \\
&= 2\langle MCY_h(n),U_h(n+1)\rangle\\
&= 2\sqrt{h}\langle  MCY_h(n),\sigma_h(n)\xi(n+1)\rangle.
\end{align*}
Therefore 
\begin{equation}\label{eq.smallk}
k(n+1)= 2\sqrt{h} \sum_{j=1}^r \left(\sum_{i=1}^d [MCY_h(n)]_i [\sigma_h(n)]_{ij} \right) \xi_j(n+1), \quad n\geq 0.
\end{equation}
Hence we have 
\[
V(Y_h(n+1))-V(Y_h(n))=-Y_h^T(n)Y_h(n)+k(n+1)+\|P^T U_h(n+1)\|^2_2, \quad n\geq 0,
\]
so if we define 
\[
K(n)=\sum_{l=1}^{n}k(l) 
=\sum_{j=1}^r \sum_{l=1}^n \left(\sum_{i=1}^d 2\sqrt{h}[MCY_h(l-1)]_i [\sigma_h(l-1)]_{ij} \right) \xi_j(l), \quad n\geq 1,
\]
then $K$ is a martingale and 
\begin{equation}\label{eq.VmasterYn}
V(Y_h(n))-V(\zeta)
=-\sum_{l=0}^{n-1} \|Y(l)\|^2_2 + K(n)
+\sum_{l=0}^{n-1}\|P^T U_h(l+1)\|^2_2,\quad n\geq 1.
\end{equation}
We now estimate the asymptotic behaviour of the last two terms on the righthand side of \eqref{eq.VmasterYn}. The quadratic variation of $K$ is given by 
\[
\langle K \rangle(n)= \sum_{l=1}^n \left(\sum_{i=1}^d 2\sqrt{h}[MCY_h(l-1)]_i [\sigma_h(l-1)]_{ij} \right)^2. 
\]
By the Cauchy--Schwartz inequality, we have 
\begin{align*}
\langle K \rangle(n)
&\leq  \sum_{l=1}^n 4h\left(\sum_{i=1}^d [MCY_h(l-1)]_i^2 \sum_{i=1}^d [\sigma_h(l-1)]^2_{ij} \right)\\
&\leq  \sum_{l=1}^n 4h \|MCY_h(l-1)\|^2 \|\sigma_h(l-1)\|^2_F.
\end{align*}
Since $\|Y_h(n)\|$ is bounded and $\|\sigma_h(n)\to 0$ as $n\to\infty$ (by Lemma~\ref{lemma.sigmahxiLLN}) we have that 
\[
\lim_{n\to\infty} \frac{1}{n}\langle K\rangle(n)=0, \quad\text{a.s.}
\]
Arguing as in the proof of Lemma~\ref{lemma.liminfXdisc}, we see that 
\begin{equation} \label{eq.Knnto0}
\lim_{n\to\infty} \frac{1}{n}K(n)=0, \quad\text{a.s.}
\end{equation}
As for the last term on the right hand side of \eqref{eq.VmasterYn}
\[
0\leq \limsup_{n\to\infty}
\frac{1}{n}\sum_{l=0}^{n-1}\|P^T U_h(l+1)\|^2_2 
\leq \|P^T\|^2_2\limsup_{n\to\infty}
\frac{1}{n} \sum_{l=0}^{n-1} \|U_h(l+1)\|^2_2 =0, \quad\text{a.s.}
\]
by  \eqref{eq.sigmahxiLLN} in Lemma~\ref{lemma.sigmahxiLLN}. Hence
\begin{equation}\label{eq.PUnto0}
\lim_{n\to\infty}
\frac{1}{n}\sum_{l=0}^{n-1}\|P^T U_h(l+1)\|^2_2 =0, \quad\text{a.s.}
\end{equation}
Since $\|Y_h(n)\|$ is a.s. bounded, we have $V(Y_h(n))/n\to 0$ as $n\to\infty$ a.s.
Therefore, using this limit and \eqref{eq.PUnto0} and \eqref{eq.Knnto0} in \eqref{eq.VmasterYn} we get 
\begin{equation*} 
\lim_{n\to\infty} \frac{1}{n}\sum_{l=0}^{n-1} \|Y_h(l)\|^2_2=0, \quad\text{a.s.}
\end{equation*}
This proves the second statement. It also implies that $\liminf_{n\to\infty} \|Y_h(n)\|_2=0$ a.s. for otherwise we would have 
\[
\liminf_{n\to\infty} \frac{1}{n}\sum_{l=0}^{n-1} \|Y_h(l)\|^2_2>0 \quad\text{with positive probability},
\]
a contradiction.
\end{proof}


\begin{thebibliography}{10}

\bibitem{JAJCAR:2010atlanta}
J.~A.~D. Appleby, J.~Cheng and A.~Rodkina. The split-step Euler-Maruyama method preserves asymptotic stability for
simulated annealing problems, \emph{Proceedings of Neural, Parallel, and Scientific Computations}, 4, 31-�36, 2010.

\bibitem{JAJCAR:2011dresden}
J.~A.~D.~Appleby, J.~Cheng and A.~Rodkina. Characterisation of the
asymptotic behaviour of scalar linear differential equations with
respect to a fading stochastic perturbation, \emph{Discrete. Contin.
Dynam. Syst.}, Suppl., 79--90, 2011.

\bibitem{JAJCAR:2012}
J.~A.~D. Appleby, J.~Cheng and A.~Rodkina.
Classification of the Asymptotic Behaviour of Globally Stable
Differential Equations with Respect to a State--independent
Stochastic Perturbations, 2012, preprint.

\bibitem{JAJGAR:2009}
J.~A.~D. Appleby, J.~G.~Gleeson and A.~Rodkina.
On asymptotic stability and instability with respect to a fading stochastic perturbation,
\emph{Applicable Analysis}, 88 (4), 579--603, 2009.

\bibitem{JAARXM:2006a}
J.~A.~D.~Appleby, X.~Mao  and A.~Rodkina. \newblock On stochastic stabilization of difference equations,
\emph{Disc. Cont. Dynam. Sys. A}, 15(3), 843--857, 2006.

\bibitem{JAARGB:2009}
J.~A.~D.~Appleby, A.~Rodkina, and G.~Berkolaiko. \newblock Nonexponential stability and Decay Rates of
nonlinear stochastic difference equations with unbounded noises. \emph{Stochastics}, 81 (2), 99--127, 2009.

\bibitem{JAARDM:2008}
J.~A.~D.~Appleby, A.~Rodkina, and D.~Mackey. \newblock Almost sure polynomial asymptotic stability of
stochastic difference equations, \emph{Journal of Mathematical Sciences}, 149 (6), 1629--1647, 2008.


\bibitem{bramsquestrosen:2004}
M.~Bramson, J.~Quastel, and J.~S.~Rosenthal, \emph{When can martingales avoid ruin?}, preprint, 2004
{\tt citeseer.ist.psu.edu/viewdoc/summary/?doi=10.1.1.64.2308}

\bibitem{DekkerVerwer:1984}
K.~Dekker and J.~G.~Verwer, Stability of Runge--Kutta Methods for Stiff Nonlinear Equations,
North-Holland, Amsterdam, 1984.

\bibitem{HairerWanner:1996}
E.~Hairer and G.~Wanner, Solving Ordinary Differential Equations II, Stiff and Differential-
Algebraic Problems, 2nd ed., Springer-Verlag, Berlin, 1996.

\bibitem{HMS:2002}
D.~J.~Higham, X.~Mao, and A.~M.~Stuart. \newblock Strong convergence of Euler-like methods for
nonlinear stochastic differential equations, \emph{SIAM J. Numer. Anal.}, 40, 1041--1063, 2002.

\bibitem{HMY:2007}
D.~J.~Higham, X.~Mao, and C.~Yuan. \newblock Almost sure and moment exponential stability
in the numerical simulation of stochastic differential equations,
\emph{SIAM J. Numer. Anal.}, 45 (2), 592--609, 2007.

\bibitem{K&S}
I. Karatzas and S.~E.~Shreve, ``Brownian motion and stochastic
calculus", $2^{nd}$ edition, Springer-Verlag, New York, 1991.

\bibitem{LipShir:1989}
R.~Sh.~Liptser and A.~N.~Shiryaev, \newblock {\em Theory of Martingales}. \newblock
Kluwer Academic Publishers, Dordrecht, 1989.

\bibitem{Mao1}
X.~Mao.
\newblock {\em Stochastic Differential Equations and their Applications}.
\newblock Horwood Publishing Limited, Chichester, 1997.

\bibitem{MSH:2002}
J.~C.~Mattingly, A.~M.~Stuart, and D.~J.~Higham. \newblock Ergodicity
for SDEs and approximations: locally Lipschitz vector fields and
degenerate noise, \emph{Stochastic Process. Appl.} 101 (2), 185--232, 2002.

\bibitem{Rugh:1996}
W.~J.~Rugh, \emph{Linear System Theory}, Second Edition,
Prentice-Hall, Englewood Cliffs, N.J., 1996.

\bibitem{Shir:1996}
A.~N.~Shiryaev, Probability (2nd edition), Springer, Berlin, 1996.

\bibitem{SH:1998}
A.~N.~Stuart and A.~R.~Humphries. \newblock {\em Dynamical Systems and Numerical Analysis}. Cambridge University Press, Cambridge, 1996.

\bibitem{Williams:1991}
D.~Williams, Probability with Martingales, Cambridge University Press, Cambridge, 1991.

\end{thebibliography}
\end{document}